\definecolor{light-light-gray}{rgb}{0.9, 0.9, 0.88}
 \definecolor{light-gray}{rgb}{0.52, 0.52, 0.51}
\definecolor{sand}{RGB}{221, 204, 119} 
\definecolor{cyan}{RGB}{136, 204, 238}
\definecolor{rose}{RGB}{204, 102, 119} 
\definecolor{green}{RGB}{34, 156, 31}
\definecolor{orange} {rgb}{1.0, 0.6, 0.4}
\definecolor{violet}{rgb}{0.75, 0.58, 0.89} 
\DeclareMathOperator{\supp}{supp}
\tikzset{smalltext/.style={"\textup{\small #1}" description}}
\tikzset{hidden/.style = {thick, dashed}}
\definecolor{dark-gray}{rgb}{0.6, 0.6, 0.58}
\definecolor{light-cyan}      {RGB}{  100,   150,   180}
\definecolor{orange} {rgb}{1.0, 0.6, 0.4}
\definecolor{violet}{rgb}{0.75, 0.58, 0.89}
\def\R{{\mathbb R}}
\def\Z{{\mathbb Z}}
\DeclareMathOperator{\Borel}{Borel}
\DeclareMathOperator{\Tor}{Tor}
\newcommand{\GL}{{\rm GL}}
\newcommand{\lcm}{{\rm lcm}}
\theoremstyle{plain}
\newtheorem{theorem}{Theorem}[section]
\newtheorem{lemma}[theorem]{Lemma}
\newtheorem{corollary}[theorem]{Corollary}
\newtheorem{proposition}[theorem]{Proposition}
\theoremstyle{definition}
\newtheorem{definition}[theorem]{Definition}
\newtheorem{example}[theorem]{Example}
\newtheorem{remark}[theorem]{Remark}
\newtheorem{construction}[theorem]{Construction}
\newtheorem{question}[theorem]{Question}
\newtheorem{notation}[theorem]{Notation}
\title[]{Resolutions of pinched power ideals}
\author{{\DJ}\`ao, Ho\`ai} 
\address[Department of Mathematics]{Virginia Tech}
\email{hoaidao@vt.edu}
\author{ Mermin, Jeffrey}
\address[Department of Mathematics]{Oklahoma State University}
\email{mermin@math.okstate.edu}
\date{}
\thanks{We thank Hailong Dao, Chris Francisco, Craig Huneke, Susan Morey, Uwe Nagel, Irena Peeva, Jay Schweig, and Irena Swanson for helpful conversations.  Many of these conversations took place during workshops at the Simons Laufer Institute, which also provided us with travel support.}
\def\blankfootnote{\xdef\@thefnmark{}\@footnotetext}
\begin{document}
	
\begin{abstract} 
    In this paper, we construct resolutions of ideals obtained by removing a small number of generators from the generators of $(x_1,\dots,x_n)^d$. 
\end{abstract}
\blankfootnote{\emph{2020 Math Subject Classification:}13D02, 13F55, 05E40}
\blankfootnote{\emph{Keywords and Phrases:} Minimal resolutions, polytopal resolutions, Borel ideals}

	 \maketitle

\section{Introduction}
\phantomsection\label{sec:intro}

Let $S=k[x_1,\dots,x_n]$, $I=\left(x_1,\dots,x_n\right)^d$, and $\widehat{I}=I \smallsetminus \{x_1^{d_1} \cdots x_n^{d_n}\}$ the ideal obtained by removing $m=x_1^{d_1} \cdots x_n^{d_n}$ from the minimal generators of $I$.  In this paper, we describe a minimal polytopal resolution of $I$, based on the complex-of-boxes resolution (see \cite{NagelReiner}), which has cyclic symmetry about $m$, and then use this complex to construct a minimal polytopal resolution for $\widehat{I}$.

Understanding the minimal resolutions of monomial ideals has been a major driver of research in commutative algebra for more than fifty years.  The free resolution contains all homological information about the ideal, including its regularity and Hilbert function.  Unfortunately, the problem of describing a minimal free resolution, even for monomial ideals, without requiring prohibitive amounts of computation, seems to be intractable in general.  

The most substantial case with a computationally easy solution is that of Borel-fixed ideals (which include powers of the maximal ideal), where both the Eliahou-Kervaire resolution (\cite{EK,PS}) and the complex-of-boxes construction (\cite{NagelReiner}) allow one to read off, from the list of monomial generators, both a basis for the minimal resolution and 
the formulas for the differential maps.  The two constructions are different, meaning that they produce different choices of basis for the same resolution.  These bases are fundamentally non-canonical; in the case of a power of the maximal ideal they also rely on an ordering of the variables and so fail to maintain the symmetries of the ideal.

It is, in principle, possible to construct canonical explicit symmetry-respecting resolutions by refusing to choose a basis.  In the case of a power of the maximal ideal, this goes back at least as far as \cite{BuchsbaumEisenbud}, where $I$ is realized as a special case of a generic determinantal ideal; the resulting construction relies on considerable functorial wizardry and has full $\GL_{n}$ symmetry.  More recently, sylvan resolutions (\cite{EagonMillerOrdog}) and dynamical systems resolutions (\cite{Tchernev}) provide explicit descriptions of the minimal resolutions of arbitrary monomial ideals; Tchernev notes that dynamical systems resolutions respect all symmetries of the underlying ideal, and we suspect that the same is true of sylvan resolutions.  Both these constructions require a homology computation at every node of the LCM lattice, which is prohibitive in general.  

In recent decades, an important trend in the study of resolutions has been to identify the resolution with some topological or combinatorial object which supports its structure, such as a simplicial complex or a poset.  (See \cites{BS,BPS,GasharovPeevaWelker,Novik,mermin2011simplicial,Clark,Mapes,ClarkMapes,ClarkTchernev,TchernevVarisco}.)  While Velasco shows that even the class of CW complexes is not large enough to describe all monomial resolutions, these techniques have been very useful where they work, allowing us to read off the Betti numbers, as well as bases and the corresponding differential matrices, from the combinatorics of the underlying object.  Our results are very much in this tradition; the resolutions of $I$ and $\widehat{I}$ we construct are supported on a polytopal complex which has cyclic symmetry about the monomial $m$.

The paper is organized as follows. In section \ref{sec:background}, we introduce some notation and describe the complex-of boxes resolution.
In section \ref{sec3}, we establish some technical results that are necessary for the rest of the paper.   
In section \ref{sec4}, we construct symmetric minimal resolutions of $I=(x_{1},\dots, x_{n})^{n}$ and  $\widehat{I}=(x_{1},\dots, x_{n})\smallsetminus \{x_{1}\dots x_{n}\}$.  These constructions are special cases of those found in the next two sections, so we focus on the geometric intuition, suppressing the proofs completely.    
In section \ref{sec5}, we construct a polytopal minimal resolution of a power of the maximal ideal which is cyclically symmetric around a monomial $m$.
In section \ref{sec6},   we construct a  minimal resolution  of $\widehat{I}  = I \smallsetminus \{x_{i_1}^{d_{i_1}} \cdots x_{i_s}^{d_{i_s}}\}$.
In section \ref{sec7}, we use the mapping cone exact sequence to derive the graded Betti numbers of $\widehat{I}$ from those of $I$, and provide explicit formulas.  We observe that this is a highly unusual use of the mapping cone, as the deduction almost always goes in the other direction.
Finally, in section \ref{sec8}, we discuss the question of when our techniques can work with more than one deleted generator, and provide some partial answers in terms of staircase diagrams.

\section{Background and Notation}
\phantomsection\label{sec:background}

Throughout the paper, $S=k[x_1,\dots,x_n]$  is a polynomial ring over an arbitrary field $k$.   
 
    \subsection{Free resolutions}
	\begin{definition}
	\phantomsection \label{2.1}
	Let $I$ be an ideal of $S$. A {\it free resolution} of $\dfrac{S}{I}$ is an exact sequence
		\[
		\xymatrixcolsep{2pc}
		\xymatrix{
			\mathbf{F} \colon \cdots \ar[r] & F_i \ar[r]^{d_i} & F_{i-1} \ar[r]^{d_{i-1}} & \cdots \ar[r] & F_0 \ar[r]^{d_0} & \dfrac{S}{I} \ar[r] & 0,
		}
		\]
        where each $F_i$ is a free $S-$module.
		
	If $I$ is a homogeneous ideal of $S$, then the free resolution is said to be {\it minimal} if $d_{i+1}\left(F_{i+1}\right) \subset (x_{1},\dots, x_{n})F_i$ for all $i$.
		
        Up to isomorphism, there exists a unique minimal  free resolution of $\dfrac{S}{I}$. If        $\mathbf{F}$ is the minimal free resolution of $\dfrac{S}{I}$, the $i^{\text{th}}$ {\it Betti number} (or {\it total Betti number}) of $\dfrac{S}{I}$ over $S$, denoted by $\beta_i^S$, is 
		\begin{align*}
			\beta_i^S\left(\dfrac{S}{I}\right)={\rm{rank}}\left(F_i\right),
		\end{align*}
        and if  $F_i = \bigoplus \limits_{p\in\Z} S\left(-p\right)^{\beta_{i,p}}$ for each $i$, the exponents $\beta_{i,p}$ are called {\it graded Betti numbers} of $\dfrac{S}{I}$.
	\end{definition}

	\begin{notation}
            Following \cite{mermin2011simplicial}, we write multigradings multiplicatively.   
            That is, for each monomial $m$ of $S$, set $S_{m}$ equal to the $k$-vector space spanned by $m$. Then $S = \bigoplus S_{m}$, and $S_{m} \cdot S_{n} = S_{mn}$, so this decomposition is a grading. We say that the monomial $m$ has multidegree $m$. We allow multidegrees to have negative exponents, so, for example, the twisted module $S(m^{-1})$ is a free module with generator in multidegree 	$m$, and $S(m^{-1})_{n} \cong S_{m^{-1}n}$ as a vector space; this is one-dimensional if no exponent
		of $m^{-1}n$ is negative, and trivial otherwise. Note that $S = S(1)$.	
	\end{notation}

    \begin{notation}
        All ideals considered in this paper are monomial ideals.  Recall that every monomial ideal $J$ has a unique minimal monomial generating set, which we refer to as $G(J)$.
    \end{notation}

    \begin{definition}
        We say that a monomial ideal is \emph{equigenerated (in degree $d$)} if all its minimal generators have the same degree $d$.  All ideals considered in this paper will be equigenerated.
    \end{definition}
	
    \subsection{Cellular resolutions and polyhedral complexes}

        The idea of parametrizing a resolution by a simplicial complex was first introduced in \cite{BPS} by Bayer, Peeva, and Sturmfels, and then extended to more general cellular complexes in \cite{BS} by Bayer and Sturmfels.  For details, see the original papers or the textbooks \cite[Chapter 4]{miller2004combinatorial} or \cite[Chapters 57--60]{Peeva}.  In this paper, we are specifically interested in resolutions supported on a polytopal complex; the definitions below loosely follow the treatment in \cite{miller2004combinatorial}
    
	\begin{definition}
		\phantomsection \label{2.5}
            A {\it polyhedral cell complex} $X$ is a finite collection of convex polytopes (in $\R^n$), called {\it faces} of $X$, satisfying two properties:
		\begin{itemize}
			\item If $\mathcal{P}\in X$ and $\mathcal{F}$ is a face of $\mathcal{P}$, then $\mathcal{F}\in X$.
			
			\item If $\mathcal{P}, \mathcal{Q}\in X$, then $\mathcal{P} \cap \mathcal{Q}$ (computed as a subset of $\mathbb{R}^{n}$) is a face of both $\mathcal{P}$ and $\mathcal{Q}$.
	       \end{itemize}
            The embedding of $X$ in $\mathbb{R}^{n}$ equips it with an \emph{orientation function}, \[\text{sign}:X\times X\to \{\pm 1, 0\}\] with the property that $\text{sign}(G,F)=\pm 1$ if and only if $G$ is a facet of $F$.  For all $F,G\in X$ we have $\displaystyle\sum_{H\in X}\text{sign}(G,H)\text{sign}(H,F) = 0$.
		
	\end{definition}

	\begin{definition}
        \phantomsection \label{2.6}
            The cell complex $X$ is called a \emph{labelled} complex if it is equipped with a labeling function
            \[
            \ell:X\to\{\text{monomials of $S$}\}
            \]
            that assigns a monomial $\ell(v)$ to each vertex of $X$ and satisfies $\ell(F)=\lcm\{\ell(v):v\in F\}$ for every face $F$ of $X$.  We refer to $\ell(F)$ as the \emph{label} of $F$. 
	\end{definition} 
	
	\begin{definition}
	\phantomsection \label{2.7}
            Let $X$ be a labelled polytopal complex.  
            We construct a complex of free modules $\mathcal{F}_{X}$ with basis given by the faces of $X$ and differential arising from the labels and the topological boundary maps.  More precisely, for an $i$-dimensional face $F\in X$, we create a basis element (also called $F$) with homological degree $i$ and internal degree $\ell(F)$.  Then the differential $\partial$ of $\mathcal{F}_X$ is 
		\begin{equation*}  
			\partial (F) = \sum \limits_{\text{facets } G \text{ of } F} {\rm{sign}} (G,F)  
			\dfrac{\ell(F)}{\ell(G)}G
            \end{equation*}

            If $\mathcal{F}_{X}$ is  acyclic, we say that it is a \emph{polytopal resolution}. 
    
	\end{definition}

        For any multidegree $m$, denote
        \begin{gather*}
            X_{\leq m}=\{F\in X : \ell(F) \text{ divides } m\}. 
        \end{gather*}

	\begin{theorem}
            \phantomsection\label{2.8}
            Let $X$ be a labelled polytopal complex.  The free complex $\mathcal{F}_X$ supported on $X$ is a (polytopal) resolution if and only if $X_{\leq m}$ is acyclic for all $m$.  
            In this case, it is a free resolution of the quotient module $\dfrac{S}{I}$, where $I=(\ell(v) : v\in X \text{ is a vertex})$ is generated by the monomial labels of vertices.
	\end{theorem}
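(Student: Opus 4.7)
The plan is to analyze $\mathcal{F}_X$ one multidegree at a time, by comparing each multigraded strand to the cellular chain complex of a subcomplex of $X$.

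First, I would observe that the formula in Definition \ref{2.7} makes $\mathcal{F}_X$ into a multigraded complex of free $S$-modules, with each basis element $F$ sitting in multidegree $\ell(F)$. For a fixed multidegree $m$, the strand $(\mathcal{F}_X)_m$ is a complex of $k$-vector spaces, and $(F_i)_m$ has a basis indexed by the $i$-dimensional faces $F$ with $\ell(F)\mid m$, since the free summand corresponding to $F$ contributes to $(F_i)_m$ exactly when $\ell(F)$ divides $m$. In other words, $(F_i)_m$ is canonically the space of cellular $i$-chains on $X_{\leq m}$ with coefficients in $k$.

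I would then verify that the differential restricts to the signed cellular boundary. Under the identification of the basis element $(m/\ell(F))\,F$ of $(F_i)_m$ with the cellular chain $F\in C_i(X_{\leq m};k)$, the formula $\partial(F)=\sum_G \mathrm{sign}(G,F)(\ell(F)/\ell(G))G$ becomes $\sum_G \mathrm{sign}(G,F)G$; since each facet $G$ of $F$ satisfies $\ell(G)\mid\ell(F)\mid m$, every such $G$ lies in $X_{\leq m}$ and no terms are lost. Thus $(\mathcal{F}_X)_m\cong C_*(X_{\leq m};k)$ as complexes of $k$-vector spaces, and extending by the augmentation $F_0\to S$, $v\mapsto\ell(v)$ (which in multidegree $m$ sends each vertex of $X_{\leq m}$ to $1\in S_m\cong k$), one obtains the augmented reduced chain complex $\widetilde{C}_*(X_{\leq m};k)$.

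Finally, the sequence $\cdots\to F_1\to F_0\to S/I\to 0$ is exact iff it is exact in every multidegree. When $m\in I$, exactness of this strand amounts to $\widetilde{H}_*(X_{\leq m};k)=0$, i.e.\ $X_{\leq m}$ is acyclic. When $m\notin I$, the subcomplex $X_{\leq m}$ is empty and there is nothing to check beyond the trivial identification $S_m\cong (S/I)_m$. Running over all $m$ gives both directions of the theorem, and as a byproduct identifies the resolved module as $S/(\ell(v):v\text{ a vertex})$.

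The main point requiring care is the bookkeeping across the two cases $m\in I$ and $m\notin I$, together with the sign conventions of Definition \ref{2.5} that guarantee the combinatorial boundary really matches the topological one. No step is genuinely hard; the content of the theorem is essentially the multigraded reformulation, after which it reduces to the standard fact that a complex of $k$-vector spaces is acyclic iff its homology vanishes.
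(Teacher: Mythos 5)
Your argument is correct, and it is precisely the standard proof of this theorem (the Bayer--Peeva--Sturmfels / Bayer--Sturmfels criterion): pass to multigraded strands, identify the strand in degree $m$ with the augmented cellular chain complex of $X_{\leq m}$ over $k$, and note that exactness degree-by-degree is equivalent to vanishing reduced homology. The paper itself does not prove Theorem \ref{2.8}; it quotes it from the literature (\cite{BPS}, \cite{BS}, \cite{miller2004combinatorial}), and your write-up matches the proof given in those sources, including the two necessary bookkeeping points (that $X_{\leq m}$ is closed under taking faces, so no boundary terms are lost, and the trivial case $m\notin I$ where $X_{\leq m}$ is empty).
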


    \subsection{Borel ideals}
	
	\begin{definition}
            We say that a monomial ideal $B$ is \emph{Borel} if it satisfies the condition
		\begin{quotation}
			If $m x_{j}\in B$ and $i<j$, then $m x_{i}\in B$.
		\end{quotation}	
	\end{definition}
	
	\begin{definition}  
            We refer to the operation sending $m x_{j}$ to $m x_{i}$ as a \emph{Borel move}, and the operation sending $m x_{i+1}$ to $m x_{i}$ as an \emph{elementary Borel move}.  Observe that a monomial ideal is Borel if and only if it is closed under (elementary) Borel moves.
	\end{definition}

	\begin{definition}
		\phantomsection\label{2.15}
            Let $m$ be a monomial. Define ${\rm{Borel}}(m)$ to be the smallest Borel ideal containing $m$. We say that ${\rm{Borel}}(m)$
		is  the {\it principal Borel ideal} generated by $m$ and $m$ is the Borel generator of ${\rm{Borel}}(m)$.
	\end{definition}

	\begin{remark}
            Let $\mathbf{m}=(x_{1},\dots, x_{n})$ be the homogeneous ideal.  Then $\mathbf{m}^{d}=\text{Borel}(x_{n}^{d})$ is a (principal) Borel ideal.		
	\end{remark}

        The notion of $Q$-Borel ideals was introduced in \cite{FMS2} to study ideals satisfying a weaker exchange property than Borel ideals.  We need it here to simultaneously study ideals that are Borel with different orders on the variables.  
        
	\begin{definition}
		\phantomsection\label{2.27}
            Let $Q$ be a poset on $\{x_1,\cdots,x_n\}$ and $m$ be a monomial in $S$.  A {\it $Q$-Borel move} on $m$ is an operation that
		sends $m$ to a monomial $m \cdot \dfrac{x_{i_1}}{x_{j_1}} \cdots \dfrac{x_{i_s}}              {x_{j_s}}$, where $i_t <_Q j_t$ for all $t$, and all $x_{j_t}$ divide $m$. 
	\end{definition}
	
	\begin{definition}
		\phantomsection\label{2.28}
		A monomial ideal $B$ is a {\it $Q$-Borel ideal} if $B$ is closed under $Q$-Borel moves.
            That is, if $m \in  B$, then any monomial obtained from a $Q$-Borel move on $m$ is also in $B$.

        For a monomial $m$, define the \emph{principal $Q$-Borel ideal generated by $m$}, written $\Borel_{Q}(m)$, to be the smallest $Q$-Borel ideal containing $m$.  
	\end{definition}
	
        There are several explicit ways to resolve a Borel ideal. Two of the most important are the Eliahou-Kervaire resolution \cites{EK,PS} and the complex-of-boxes resolution \cite{NagelReiner}. 

        The Eliahou-Kervaire resolution allows us to write out a basis for each module explicitly, and thus compute the (graded) Betti numbers of a Borel ideal by counting the basis elements associated to each monomial generator.  (See \cite{PS}*{Corollary 3.1} or \cite{FranciscoMerminSchweigBorelgenerators}*{Section 6} for formulas.) 

        The Eliahou-Kervaire resolution is supported on a regular cellular complex \cite{mermin2010cellular}, but not a polytopal complex.

	\subsection{The complex-of-boxes resolution} 
	
            In this section, we recall  the complex-of-boxes resolutions for equigenerated Borel ideals.  The complex of boxes, introduced by Nagel and Reiner \cite{NagelReiner}, is always   polytopal.
	
	\begin{definition}
            An \emph{elementary box} or \emph{interval} is a simplex whose vertices are a subset of variables.  Anticipating Notation \ref{n:cyclicinterval}, we set $\Delta_{[i,j]}$ equal to the simplex on vertices $\{x_{i},\dots, x_{j}\}$.
	\end{definition}
	
	\begin{definition}
            A \emph{box} is a finite product of intervals.  The simplicial structure of the intervals induces a polytopal structure on the box.
	\end{definition}

        \begin{example}
            In Figure \ref{f-2}, $\Delta_{[a,c]}\times \Delta_{[c,d]}$ is a box, as is $\Delta_{[a,b]}\times \Delta_{[b,c]}\times \Delta_{[c,d]}$.
                
                \begin{figure}[hbt]

                \begin{multicols}{2}
 
                    \begin{tikzpicture}[scale=0.6][line join = round, line cap = round]
                    \draw[-, thick, red]  (-3,1.5) -- (0,1.5) -- (-1.5,2.5) --  (-3,1.5); 
                    \draw[-, thick, blue]  (-3,1.5) -- (-3,-1.5);   
                    \draw[-, thick]  (-3,-1.5) -- (0,-1.5)  -- (0,1.5);   
                    \draw[-, dashed] (-3,-1.5) --  (-1.5,-0.5) --(0,-1.5)  ; 
                    \draw[-, dashed] (-1.5,2.5) -- (-1.5,-0.5)  ; 
  
                    \fill[] (-3,1.5) circle (3pt) node[left] {$ac$}; 
                    \fill[] (0,1.5) circle (3pt) node[right] {$bc$}; 
                    \fill[] (-1.5,2.5) circle (3pt) node[above] {$c^2$};   
                    
	                    \fill[] (-3,-1.5) circle (3pt) node[below] {$ad$}; 
                    \fill[] (0,-1.5) circle (3pt) node[below] {$bd$}; 
                    \fill[] (-1.5,-0.5) circle (3pt) node[right] {$cd$};   

                    \end{tikzpicture}       
                    
                \columnbreak

                    \begin{tikzpicture}[scale=0.6][line join = round, line cap = round]

                    \draw[-, thick]  (-3,1.5) -- (0,1.5) --(1.5,2.5)-- (-1.5,2.5) --  (-3,1.5);
                    \draw[-, thick]  (-3,1.5) -- (-3,-1.5) -- (0,-1.5) --  (0,1.5);

                    \draw[-, thick] (0,-1.5) -- (1.5,-0.5) --  (1.5,2.5);
  
                    \draw[-, dashed]  (-1.5,2.5) -- (-1.5,-0.5);
                    \draw[-, dashed]  (-3,-1.5) -- (-1.5,-0.5);
                    \draw[-, dashed]  (-1.5,-0.5) -- (1.5,-0.5);
                    \fill[] (-3,1.5) circle (3pt) node[left] {$abc$};
                    \fill[] (0,1.5) circle (3pt) node[right] {$ac^2$};
                    \fill[] (-1.5,2.5) circle (3pt) node[above] {$b^2c$};      
                    \fill[] (1.5,2.5) circle (3pt) node[above] {$bc^2$};    

                    \fill[] (-3,-1.5) circle (3pt) node[below] {$abd$};
                    \fill[] (0,-1.5) circle (3pt) node[below] {$acd$};
                    \fill[] (-1.5,-0.5) circle (3pt) node[below] {$b^2d$};
                    \fill[] (1.5,-0.5) circle (3pt) node[right] {$bcd$};
                    \end{tikzpicture} 
  
                \end{multicols}
                \caption{The polytopal structures of ${\color{red}\Delta_{[a,c]}}\times {\color{blue}\Delta_{[c,d]}}$ and $\Delta_{[a,b]}\times \Delta_{[b,c]}\times \Delta_{[c,d]}$.  The vertices are labelled with monomials rather than ordered tuples, using the labelling of Definition \ref{d:boxlabels}.}
                \label{f-2}
                \end{figure}
        \end{example}

	\begin{definition}\label{d:boxlabels}
            Let $\Gamma = \displaystyle\prod_{i=1}^{n}\Delta_{i}$ be a box.  Every vertex of $\Gamma$ is an ordered $n$-tuple of variables, $v=(\dots, x_{j_{i}}\in \supp \Delta_{i}, \dots)$.  We label the vertices with monomials by forgetting the ordering, $\ell(v)=\displaystyle\prod_{i=1}^{n}x_{j_{i}}$.
		
            We say that $\Gamma$ is an \emph{admissible box} if all vertices have distinct monomial labels.
	\end{definition}

         \begin{example}
            As shown in Figure \ref{f-3}, $\Delta_{[a,c]}\times \Delta_{[c]}\times \Delta_{[c,d]}$ is an admissible box, whereas $\Delta_{[a,b]}\times \Delta_{[a,b]}$ is not admissible.
                
                \begin{figure}[hbt]

                \begin{multicols}{2}
 
                    \begin{tikzpicture}[scale=0.6][line join = round, line cap = round]
                    
                    \draw[-, thick]  (-3,1.5) -- (0,1.5) -- (-1.5,2.5) --  (-3,1.5);
                    \draw[-, thick]  (-3,1.5) -- (-3,-1.5) -- (0,-1.5) --  (0,1.5);
                    \draw[-, dashed]  (-1.5,2.5) -- (-1.5,-0.5);
                    \draw[-, dashed]  (-3,-1.5) -- (-1.5,-0.5);
                    \draw[-, dashed]  (0,-1.5) -- (-1.5,-0.5);
  
                    \fill[] (-3,1.5) circle (3pt) node[left] {$ac^2$};
                    \fill[] (0,1.5) circle (3pt) node[right] {$bc^2$};
                    \fill[] (-1.5,2.5) circle (3pt) node[above] {$c^3$};    

                    \fill[] (-3,-1.5) circle (3pt) node[below] {$acd$};
                    \fill[] (0,-1.5) circle (3pt) node[below] {$bcd$};
                    \fill[] (-1.5,-0.5) circle (3pt) node[right] {$c^2d$};   
                    
                    \end{tikzpicture}

                \columnbreak

                    \begin{tikzpicture}[scale=0.6][line join = round, line cap = round]
                    \draw[-, thick]  (0,0)--(3,0)--(3,3)--(0,3)--(0,0);
  
                    \fill[red] (0,3) circle (3pt) node[above] {$ab$};
                    \fill[] (3,3) circle (3pt) node[above] {$b^2$};

                    \fill[] (0,0) circle (3pt) node[below] {$a^2$};
                    \fill[red] (3,0) circle (3pt) node[below] {$ab$};
                 
                    \end{tikzpicture}       
  
                \end{multicols}
                \caption{The polytopal structures of $\Delta_{[a,c]}\times \Delta_{[c]}\times \Delta_{[c,d]}$ and $\Delta_{[a,b]}\times \Delta_{[a,b]}$.}
                \label{f-3}
                \end{figure}
        \end{example}

        \begin{remark}
        \label{r:monomialsAreBoxes} 
            Every monomial is itself an admissible box.  More precisely, if $m=\prod_{i=1}^{d}x_{i}$ (with indices possibly repeated), then the box $\Gamma=\prod_{i=1}^{d}\Delta_{\{i\}}$ is a zero-dimensional box consisting of the single vertex $m$.
        \end{remark}

	\begin{construction}\label{c:boxcells}
            For a monomial $m=x_{i_{1}}\dots x_{i_{d}}$, set $\Gamma(m)$ equal to the $(i_{d}-1)$-dimensional box
		\[
            \Gamma(m)=\Delta_{[1,i_{1}]} \times \Delta_{[i_{1},i_{2}]} \times \Delta_{[i_{2},i_{3}]}\times \dots \times \Delta_{[i_{d-1},i_{d}]}.
            \]
		Observe that $\Gamma(m)$ is admissible. 
		
            If $I$ is an equigenerated Borel ideal with minimal generating set $I=(m_{1},\dots, m_{s})$, then the \emph{complex of boxes} associated to $I$ is the polytopal complex generated by the faces $\{\Gamma(m_{1}), \dots, \Gamma(m_{s})\}$.  (This generating set is not minimal, since some faces, e.g. $\Gamma(x_{1}^{d})$, are not facets.)
	\end{construction}

        \begin{remark}\label{r:squarefreeSameBox}
            If $m$ is a monomial and $\sqrt{m}$ is its squarefree part, then $\Gamma(m)\cong \Gamma(\sqrt{m})$, since, after rearranging the factors, we have $\Gamma(m) = \Gamma(\sqrt{m})\times (\frac{m}{\sqrt{m}})$.
        \end{remark}

	\begin{example}
            Like the Eliahou-Kervaire resolution, the complex of boxes associated to $(x_{1},\dots,x_{n})^{d}$ is a subdivision of the topological $n$-simplex with vertices labeled by powers of the variables.
            In three variables, the two complexes coincide (modulo a permutation) as shown in Figure \ref{f-4}.  But with more variables, differences begin to emerge.  Most notably, the complex of boxes is polytopal, whereas the Eliahou-Kervaire resolution is not, as seen in Figure \ref{f-5}. For example, the complex-of-boxes resolution of $(a,b,c,d)^2$ can be decomposed into four polytopes as in Figure \ref{f-6}. 
                
            \begin{figure}[hbt]

            \begin{multicols}{2}
 
                \begin{tikzpicture}[scale=0.6][line join = round, line cap = round]
                    \draw[-, thick]  (-3,1.5) -- (3,1.5) -- (0,6) -- (-3,1.5);
                    \draw[-, thick]  (-1,4.5) -- (1,1.5);
                    \draw[-, thick]  (1,4.5) --  (-1,1.5);
                    \draw[-, thick]  (-2,3) -- (-1,1.5);
                    \draw[-, thick]  (1,4.5) -- (0,3) -- (1,1.5) -- (2,3);
                        
                    \fill[] (-3,1.5) circle (3pt) node[below] {$a^3$};
                    \fill[] (-1,1.5) circle (3pt) node[left,below] {$a^2b$};
                        
                    \fill[] (1,1.5) circle (3pt) node[below] {$ab^2$};
                    \fill[] (3,1.5) circle (2pt) node[below] {$b^3$};
                        
                    \fill[] (-2,3) circle (3pt) node[left] {$a^2c$};
                    \fill[] (0,3) circle (3pt) node[right] {$abc$};
                    \fill[] (2,3) circle (3pt) node[right] {$b^2c$};
                        
                    \fill[] (-1,4.5) circle (3pt) node[left] {$ac^2$};
                    \fill[] (1,4.5) circle (3pt) node[right] {$bc^2$};
                        
                    \fill[] (0,6) circle (3pt) node[above] {$c^3$};
                        
                \end{tikzpicture}  

            \columnbreak

                \begin{tikzpicture}[scale=0.6][line join = round, line cap = round]
                 
                    \draw[-, thick]  (-3,1.5) -- (3,1.5) -- (0,6) -- (-3,1.5);
                    \draw[-, thick]  (-1,4.5) -- (1,4.5);
                    \draw[-, thick]   (-2,3)  -- (2,3) ;
                    \draw[-, thick]   (-2,3)   --  (-1,1.5) ;
                    \draw[-, thick]   (-1,4.5)   --  (1,1.5) ;

                    \fill[] (-3,1.5) circle (3pt) node[below] {$a^3$};
                    \fill[] (-1,1.5) circle (2pt) node[left,below] {$a^2b$};
                        
                    \fill[] (1,1.5) circle (3pt) node[below] {$ab^2$};
                    \fill[] (3,1.5) circle (3pt) node[below] {$b^3$};
                        
                    \fill[] (-2,3) circle (3pt) node[left] {$a^2c$};
                    \fill[] (0,3) circle (3pt) node[below] {};
                    \fill[] (-0.2,3) circle (0pt) node[below] {$abc$};
                    \fill[] (2,3) circle (3pt) node[right] {$b^2c$};
                        
                    \fill[] (-1,4.5) circle (3pt) node[left] {$ac^2$};
                    \fill[] (1,4.5) circle (3pt) node[right] {$bc^2$};
                        
                    \fill[] (0,6) circle (2pt) node[above] {$c^3$};
                        
                \end{tikzpicture}

            \end{multicols}
            \caption{The Eliahou-Kervaire resolution (left) and the complex-of-boxes resolution (right) of $(a,b,c)^{3}$.}
            \label{f-4}
            \end{figure}

            \begin{figure}[hbt]

            \begin{multicols}{2}
 
                \begin{tikzpicture}[scale=0.6][line join = round, line cap = round]
                    \draw[-, thick]  (-3,1.5) -- (3,1.5) -- (0,6) -- (-3,1.5);
                    \draw[-, dashed]  (0,3.5) -- (-3,1.5);
                    \draw[-, dashed]  (0,3.5) -- (3,1.5);
                    \draw[-, dashed]  (0,3.5) -- (0,6);
                    \draw[-, dashed]  (0,4.75) -- (-1.5,3.75);
                    \draw[-, dashed]  (0,4.75) -- (1.5,3.75);  

                    \draw[-, dashed]  (-1.5,2.5) -- (-1.5,3.75);  
                    \draw[-, dashed]  (-1.5,2.5) -- (0,1.5);
                    \draw[-, dashed]  (1.5,2.5) -- (0,1.5);
                    \draw[-, dashed]  (1.5,2.5) -- (1.5,3.75) ;

                    \draw[-, thick]  (-1.5,3.75) -- (0,1.5) ;
                    \draw[-, thick]  (1.5,3.75) -- (0,1.5) ;
                        
                    \fill[] (-3,1.5) circle (3pt) node[below] {$a^2$};
                    \fill[] (3,1.5) circle (3pt) node[below] {$b^2$};
                    \fill[] (0,6) circle (3pt) node[above] {$c^2$};
                    \fill[] (0,3.5) circle (3pt) node[below] {$d^2$};
   
                    \fill[] (0,1.5) circle (3pt) node[below] {$ab$};

                    \fill[] (-1.5,3.75) circle (3pt) node[left] {$ac$};
                    \fill[] (1.5,3.75) circle (3pt) node[right] {$bc$};
                    \fill[] (-1.5,2.5) circle (3pt) node[below] {$ad$};
                    \fill[] (1.5,2.5) circle (3pt) node[below] {$bd$};
                    \fill[] (0,4.75) circle (3pt) node[right] {$cd$}; 
                \end{tikzpicture}

            \columnbreak

                \begin{tikzpicture}[scale=0.6][line join = round, line cap = round]
                    \draw[-, thick]  (-3,1.5) -- (3,1.5) -- (0,6) -- (-3,1.5);   
                    \draw[-, dashed]  (0,3.5) -- (-3,1.5);
                    \draw[-, dashed]  (0,3.5) -- (3,1.5);
                    \draw[-, dashed]  (0,3.5) -- (0,6);
                    \draw[-, dashed]  (-1.5,3.75) --  (-1.5,2.5);
                    \draw[-]  (-1.5,3.75) --  (1.5,3.75);
                    \draw[-]  (-1.5,3.75) --  (0,1.5) ;

                    \draw[-, dashed]  (-1.5,2.5) --  (0,1.5) ;
                    \draw[-, dashed]  (-1.5,2.5) --  (1.5,2.5) ;

                    \draw[-, dashed]  (1.5,3.75) --  (1.5,2.5) ; 

                    \draw[-, dashed]  (0,4.75) --  (-1.5,2.5) ; 

                    \draw[-, dashed]  (0,4.75) --   (1.5,2.5) ; 
                        
                    \fill[] (-3,1.5) circle (3pt) node[below] {$a^2$};
                    \fill[] (3,1.5) circle (3pt) node[below] {$b^2$};
                    \fill[] (0,6) circle (3pt) node[above] {$c^2$};
                    \fill[] (0,3.5) circle (3pt) node[below] {$d^2$};
   
                    \fill[] (0,1.5) circle (3pt) node[below] {$ab$};
                        
                    \fill[] (-1.5,3.75) circle (3pt) node[left] {$ac$};
                    \fill[] (1.5,3.75) circle (3pt) node[right] {$bc$};
                    \fill[] (-1.5,2.5) circle (3pt) node[below] {$ad$};
                    \fill[] (1.5,2.5) circle (3pt) node[below] {$bd$};
                    \fill[] (0,4.75) circle (3pt) node[right] {$cd$}; 
                \end{tikzpicture}

            \end{multicols}
            \caption{The complexes supporting the Eliahou-Kervaire resolution and the complex-of-boxes resolution of  $(a,b,c,d)^{2}$.  The Eliahou-Kervaire complex is not polytopal, since the cell containing $d^2$ contains $ac$ and $bc$ but none of the segment connecting them.  The box complex consists of two tetrahedra and two triangular prisms.}
            \label{f-5}
            \end{figure}

            \begin{figure}[hbt]

            \begin{multicols}{2}

                \begin{tikzpicture}[scale=0.8][line join = round, line cap = round]
                         
                \draw[-, thick]  (-1.5,2.5) -- (1.5,2.5) -- (0,4.75) -- (-1.5,2.5);             
                \draw[-, dashed]  (0,3.5) -- (-1.5,2.5);
                \draw[-, dashed]  (0,3.5) -- (1.5,2.5);
                \draw[-, dashed]  (0,3.5) -- (0,4.75);
                 
                \fill[] (0,3.5) circle (2pt) node[below] {$d^2$};
                \fill[] (-1.5,2.5) circle (2pt) node[below] {$ad$};
                \fill[] (1.5,2.5) circle (2pt) node[below] {$bd$};
                \fill[] (0,4.75) circle (2pt) node[above] {$cd$};
                         
                \end{tikzpicture} 

            \columnbreak

                \begin{tikzpicture}[scale=0.8][line join = round, line cap = round]                  \draw[-, thick]  (-3,1.5)  --  (0,1.5) -- (-1.5,3.75) --  (-3,1.5);             
                \draw[-, dashed]   (-1.5,2.5) -- (-3,1.5);
                \draw[-, dashed]   (-1.5,2.5) -- (0,1.5);
                \draw[-, dashed]   (-1.5,2.5) -- (-1.5,3.75); 
                        
                \fill[] (-3,1.5) circle (2pt) node[below] {$a^2$};
                \fill[] (0,1.5) circle (2pt) node[below] {$ab$};
                \fill[] (-1.5,3.75) circle (2pt) node[above] {$ac$};
                \fill[] (-1.5,2.5) circle (2pt) node[below] {$ad$};
                \end{tikzpicture}
 
            \end{multicols}

            \begin{multicols}{2} 

                \begin{tikzpicture}[scale=0.8][line join = round, line cap = round]
                
                \draw[-, thick]  (0,1.5) -- (-1.5,3.75) -- (-1.5,2.5) -- (0,1.5);   
                \draw[- ]  (-1.5,3.75) -- (1.5,3.75) --  (3,1.5)--  (0,1.5) --  (-1.5,3.75);
                \draw[-, dashed]   (-1.5,2.5) -- (1.5,2.5) -- (3,1.5);
                \draw[-, dashed]     (1.5,2.5) -- (1.5,3.75);
                
                \fill[] (3,1.5) circle (2pt) node[below] {$b^2$};
                \fill[] (0,1.5) circle (2pt) node[below] {$ab$};
                \fill[] (-1.5,3.75) circle (2pt) node[above] {$ac$};
                \fill[] (1.5,3.75) circle (2pt) node[above] {$bc$};
                \fill[] (-1.5,2.5) circle (2pt) node[left] {$ad$};
                \fill[] (1.5,2.5) circle (2pt) node[below] {$bd$};
                
                \end{tikzpicture}

            \columnbreak 
                \begin{tikzpicture}[scale=0.8][line join = round, line cap = round]
                \draw[-, thick]  (0,6) -- (-1.5,3.75) --  (1.5,3.75)  -- (0,6);    
                \draw[-, thick]   (-1.5,3.75) -- (-1.5,2.5) --  (1.5,2.5) --  (1.5,3.75) --  (-1.5,3.75); 
                \draw[-, dashed] (-1.5,2.5) --  (0,4.75) -- (1.5,2.5) ;
                \draw[-, dashed] (0,6) --  (0,4.75)   ;
                
                \fill[] (0,6) circle (2pt) node[above] {$c^2$};
                \fill[] (-1.5,3.75) circle (2pt) node[left] {$ac$};
                \fill[] (1.5,3.75) circle (2pt) node[right] {$bc$};
                \fill[] (-1.5,2.5) circle (2pt) node[below] {$ad$};
                \fill[] (1.5,2.5) circle (2pt) node[below] {$bd$};
                \fill[] (0,4.75) circle (2pt) node[right] {$cd$};
                
                \end{tikzpicture}
  
            \end{multicols}

            \caption{The four boxes comprising the complex-of-boxes resolution of $(a,b,c,d)^{2}$.}
            \label{f-6}
            \end{figure}
        \end{example}

	\begin{theorem}\cite[Theorem 3.12]{NagelReiner}
            Let $I$ be an equigenerated Borel ideal.  Then the complex of boxes associated to $I$ is a polytopal complex, and supports a minimal resolution of $I$.
	\end{theorem}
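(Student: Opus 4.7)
The plan is to verify the hypotheses of Theorem \ref{2.8}, which requires three checks: that the complex of boxes $X$ associated to $I$ is a genuine polytopal cell complex (i.e.\ the intersection of any two boxes is a common face), that the subcomplex $X_{\leq \mu}$ is acyclic for every multidegree $\mu$, and that the resulting resolution is minimal and has the correct monomial generators as its vertex labels.

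First I would address the polytopal structure. Each $\Gamma(m)$ is by construction a product of simplices, so is itself a polytope, and its faces are precisely the subproducts $\prod \Delta_{[a_i,b_i]}$ with $[a_i,b_i]\subseteq [i_{i-1},i_i]$. When two minimal generators $m=x_{i_1}\cdots x_{i_d}$ and $m'=x_{i'_1}\cdots x_{i'_d}$ of $I$ are given (with indices in non-decreasing order), I would define their \emph{Borel-meet} coordinatewise as $m\wedge m' = x_{\min(i_1,i'_1)}\cdots x_{\min(i_d,i'_d)}$. Since $I$ is Borel and $m\wedge m'\leq_{\mathrm{Borel}}m,m'$, this monomial still lies in $I$. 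The main claim here is that $\Gamma(m)\cap \Gamma(m')=\Gamma(m\wedge m')$ under the vertex-label identification, and that this is realized as a subproduct-of-subsimplices on each side. Because each vertex of a box is uniquely recoverable from its monomial label (admissibility, Definition \ref{d:boxlabels}), one can check this equality vertex-by-vertex and then extend to faces.

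Second, I would verify acyclicity of $X_{\leq \mu}$ for an arbitrary monomial multidegree $\mu$. The faces of $X_{\leq \mu}$ are those boxes (and sub-boxes) whose labels divide $\mu$. My plan is to produce a canonical Borel-maximum $m^{*}(\mu)$ among the degree-$d$ monomial generators of $I$ dividing $\mu$, obtained by greedily filling factorization slots of $\mu$ from largest index downward subject to the Borel constraints arising from the minimal generators of $I$. With such an $m^{*}(\mu)$ in hand, I expect that every face in $X_{\leq \mu}$ is a face of $\Gamma(m^{*}(\mu))$, so that $X_{\leq \mu}$ coincides with a single subbox of $\Gamma(m^{*}(\mu))$, which is a contractible polytope. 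If producing a single such $m^{*}(\mu)$ proves awkward, the fallback is a discrete Morse matching on $X_{\leq \mu}$ indexed by the Borel order on its generators, or an explicit shelling of the subbox structure, each of which delivers contractibility.

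Third, minimality and identification of the resolved ideal follow quickly. By construction of $\Gamma(m)$, every facet $G$ of a face $F$ satisfies $\ell(G)\mid \ell(F)$ with $\ell(G)\neq \ell(F)$, so the differential coefficients $\ell(F)/\ell(G)$ in Definition \ref{2.7} all lie in the maximal ideal; the resolution is therefore minimal. The vertices of $X$ are, by Construction \ref{c:boxcells} and the description above, exactly the monomials $\leq_{\mathrm{Borel}}$ some minimal generator of $I$, which by Remark \ref{2.22} are precisely the minimal monomial generators of $I$. Theorem \ref{2.8} then identifies the complex as a minimal free resolution of $I$ (equivalently of $S/I$ after augmentation). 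The main obstacle is the acyclicity of $X_{\leq \mu}$ in the general multi-Borel-generated setting: singling out the correct combinatorial structure on the generators of $I$ dividing $\mu$ so that the resulting subcomplex collapses onto a single box is the technical heart of the argument, while the polytopal intersection property, although nontrivial, follows a standard product-of-simplices pattern.
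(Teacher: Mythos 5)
First, a remark on context: the paper does not prove this statement at all --- it is quoted directly from Nagel and Reiner --- so your proposal is being measured against their argument and against the Sinefakopoulos-style gluing machinery (Lemmas \ref{2.9}, \ref{l:Achilleasdisjoint}, \ref{2.11}, \ref{2.25}) that the present paper uses for its own constructions. Your overall outline (check polytopality, check acyclicity of every $X_{\leq \mu}$, check minimality via strict divisibility of facet labels) is the right shape, and the minimality step is essentially correct: the label of a box $F_{1}\times\cdots\times F_{d}$ is $\prod_{j}x_{j}^{\#\{i\,:\,x_{j}\in F_{i}\}}$, and a facet deletes one vertex from one factor, so the label strictly drops. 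However, both of your substantive claims are false as stated.

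For the intersection claim, take $I=(a,b,c)^{2}$, $m=bc$, $m'=ac$. Then $\Gamma(bc)=\Delta_{[a,b]}\times\Delta_{[b,c]}$ is the square with vertices $ab,ac,b^{2},bc$, while $\Gamma(ac)=\Delta_{[a,a]}\times\Delta_{[a,c]}$ is the triangle with vertices $a^{2},ab,ac$; their intersection is the single edge $\{ab,ac\}$. Your Borel-meet is $m\wedge m'=ac$, and $\Gamma(ac)$ is the whole triangle, which contains the vertex $a^{2}\notin\Gamma(bc)$. So $\Gamma(m)\cap\Gamma(m')\neq\Gamma(m\wedge m')$ in general; the intersection is a proper sub-box determined factor by factor, and identifying it correctly is exactly where the work lies. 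For the acyclicity claim, the same ideal with $\mu=abc$ gives $X_{\leq\mu}$ equal to the path with vertices $ab$, $ac$, $bc$ and edges $\{ab,ac\}$, $\{ac,bc\}$: these are two edges of the square $\Gamma(bc)$, whose own label $ab^{2}c$ does not divide $\mu$, and the diagonal $\{ab,bc\}$ is not a face of any box. This subcomplex is contractible but is not a face of any single box, so your expectation that $X_{\leq\mu}$ collapses onto ``a single subbox of $\Gamma(m^{*}(\mu))$'' fails already in the simplest example, and the announced fallbacks (an unspecified Morse matching or shelling) are names of techniques rather than arguments. The standard ways to close this gap are either an explicit deformation retraction of $X_{\leq\mu}$ in the spirit of Lemma \ref{2.25}, or the inductive gluing route used throughout Section \ref{sec4}: factor the ideal into smaller Borel pieces, resolve products of pieces with disjoint or nearly disjoint supports by products of box complexes (Lemmas \ref{l:Achilleasdisjoint} and \ref{2.11}), and assemble the result with Lemma \ref{2.9}.
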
	

    \subsection{Notation for cyclic intervals}

        Many of the constructions and proofs throughout the paper will involve working with a cyclically symmetric collection of ideals.  
        To avoid notational nightmares, we introduce some notation for dealing with cyclic intervals.  Let $\sigma = (1\;\;2\;\;3\;\;\dots\;\;n)$ be the standard generator for the cyclic group on $\{1,2,\dots, n\}$.

    \begin{notation}\label{n:cyclicinterval}
        For any $t\in \{1,\dots, n\}$, the \emph{initial interval} $[1,t]$ refers to the set $\{1,2,\dots, t\}$.  For any $i,j\in\{1,\dots,n\}$, the cyclic interval refers to the interval obtained by applying $\sigma^{i-1}$ to the appropriate initial interval.  That is, $[i,j]=\{i,i+1,\dots, j\}$ if $i\leq j$ and $[i,j]=\{i,i+1,\dots, n,1,\dots, j\}$ if $i>j$.  

        We will define many objects in terms of cyclic intervals.  
        \begin{itemize}
            \item The \emph{simplicial complex supported on $[i,j]$} is $\Delta_{[i,j]}=\langle \{x_{s}:s\in [i,j]\}\rangle$.
            \item The \emph{monomial supported on $[i,j]$} is $m_{[i,j]}=\prod_{s\in[i,j]}x_{s}$.
            \item The \emph{prime ideal supported on $[i,j]$} is $\mathbf{m}_{[i,j]} = (x_{s}:s\in [i,j])$.
            \item The poset $Q_{i}$ is the chain on $[i,i-1]$ defined by the covering relations $x_{i}<x_{i+1}<\dots <x_{n}<x_{1}<\dots <x_{i-1}$.
        \end{itemize}
    \end{notation}

 
    \section{Gluing of complexes}\label{sec3}

        The goal of this paper is to build (new) polytopal resolutions for powers and pinched powers of the maximal ideal by gluing simpler complexes together.  The gluing process is unfortunately quite technical, and we rely on a number of lemmas due to Sinefakopoulos \cite{Sinefakopoulos2007}.  We collect the necessary results in this section.

	\begin{lemma}\cite[Lemma 3.1]{Sinefakopoulos2007}
        \phantomsection\label{2.9}
        Let $I$ and $J$ be  two monomial ideals in $S$ such that $G(I+J)=G(I) \cup G(J)$ set-theoretically. Suppose that 
        \begin{enumerate}[{\normalfont (i)}]
            \item $X$ and $Y$ are regular cell complexes in $\R^N$ that support a (minimal) free resolution of $I$ and $J$, respectively, and
            \item $X\cap Y$ is a regular cell complex that supports a (minimal) free resolution of $I \cap J$.
	\end{enumerate}
		
	Then $X\cup Y$ supports a (minimal) free resolution of $I + J$.
		
	\end{lemma}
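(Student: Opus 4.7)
The plan is to invoke Theorem \ref{2.8}, which reduces the claim to verifying two things: that $\mathcal{F}_{X\cup Y}$ is the cellular complex associated to $I+J$, and that $(X\cup Y)_{\leq m}$ is acyclic for every multidegree $m$. For the first point, the vertices of $X\cup Y$ are the vertices of $X$ together with the vertices of $Y$, so the set of vertex labels is $G(I)\cup G(J)=G(I+J)$, which is a minimal generating set of $I+J$ by hypothesis. Hence $\mathcal{F}_{X\cup Y}$ indeed has the correct target module.

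For the acyclicity, I would use a Mayer--Vietoris argument. The key combinatorial observation is that for any multidegree $m$ we have
\[(X\cup Y)_{\leq m}=X_{\leq m}\cup Y_{\leq m},\qquad (X\cap Y)_{\leq m}=X_{\leq m}\cap Y_{\leq m},\]
because $\ell$ restricts compatibly. By applying Theorem \ref{2.8} to the three hypotheses, we know that $X_{\leq m}$, $Y_{\leq m}$, and $(X\cap Y)_{\leq m}=X_{\leq m}\cap Y_{\leq m}$ are all acyclic (or empty) for every $m$. Then the reduced Mayer--Vietoris sequence
\[\cdots\to\widetilde{H}_{i}(X_{\leq m}\cap Y_{\leq m})\to\widetilde{H}_{i}(X_{\leq m})\oplus\widetilde{H}_{i}(Y_{\leq m})\to\widetilde{H}_{i}((X\cup Y)_{\leq m})\to\widetilde{H}_{i-1}(X_{\leq m}\cap Y_{\leq m})\to\cdots\]
forces $\widetilde{H}_{i}((X\cup Y)_{\leq m})=0$ in every degree, yielding the desired acyclicity.

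For minimality, I would argue as follows. A cellular resolution is minimal if and only if no face $F$ shares its label with any of its facets $G$. Because each face of $X\cup Y$ belongs to $X$ or to $Y$, and because $X$ and $Y$ are themselves closed under taking faces, every facet pair $G\prec F$ in $X\cup Y$ lies entirely in $X$ or entirely in $Y$; moreover pairs lying in both belong to $X\cap Y$. Minimality for $X$, $Y$, and $X\cap Y$ therefore transfers to $X\cup Y$.

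The main technical obstacle is the Mayer--Vietoris step: one needs to confirm that $X_{\leq m}\cup Y_{\leq m}$ is an honest CW-decomposition of its underlying space (so that Mayer--Vietoris applies) and to handle the boundary cases where one or more of the three subcomplexes is empty, using the convention $\widetilde{H}_{-1}(\emptyset)=k$ baked into Theorem \ref{2.8}. The regularity hypothesis on $X$, $Y$, and $X\cap Y$ is used precisely to guarantee that the combinatorial boundary and the topological boundary agree and that the inclusions are cofibrations, so the Mayer--Vietoris sequence is available. Once these foundational points are in place, the argument above essentially writes itself.
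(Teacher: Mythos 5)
The paper does not prove this lemma itself --- it is quoted verbatim from Sinefakopoulos \cite{Sinefakopoulos2007} --- and your argument (reduce to the Bayer--Sturmfels acyclicity criterion of Theorem \ref{2.8}, observe $(X\cup Y)_{\leq m}=X_{\leq m}\cup Y_{\leq m}$ and $(X\cap Y)_{\leq m}=X_{\leq m}\cap Y_{\leq m}$, and run reduced Mayer--Vietoris, with minimality checked facet-by-facet) is precisely the proof given in that reference. The only point worth making explicit in the boundary-case discussion is that hypothesis (ii) is what guarantees $X_{\leq m}\cap Y_{\leq m}\neq\emptyset$ whenever both $X_{\leq m}$ and $Y_{\leq m}$ are nonempty (since such an $m$ lies in $I\cap J$), which is exactly what prevents the union from becoming disconnected.
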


	\begin{remark}\cite[Remark 3.2]{Sinefakopoulos2007}
        \phantomsection\label{rmk4.3-P}
        For any two monomial ideals $I$ and $J$, we have
        $G(I + J) \subseteq G(I) \cup  G(J)$.
        Our assumption that $G(I + J) = G(I) \cup G(J)$ guarantees the right labelling of the
        cell complex $X \cup Y$. A case where equality holds is when all elements of
        $G(I) \cup G(J)$ are of the same degree.
        \end{remark}

        \begin{remark}
        From assumption (ii) of Lemma \ref{2.9}, $I\cap J$ is generated by the labels of the vertices of $X\cap Y$.  We conclude that 
        $G(I \cap J) = G(I) \cap G(J)$.  This is an extremely strong assumption, as intersecting ideals almost always produces generators in higher degree.  But it does hold in the cases of interest, where, among other things, the ideals in question all have linear quotients, and the boundaries of the complexes coincide.
	\end{remark}

        \begin{lemma}
        \phantomsection \label{l:unionmanyboxes}
        Let $M_1,M_2,\dots,M_n$ be   equigenerated monomial ideals, all generated in the same degree. Assume that 
        \begin{enumerate}[{\normalfont (i)}]
            \item $W_1, \dots,W_n$ are regular cell complexes in $\R^N$ that support (minimal) free resolutions of $M_1,\dots,M_n$, respectively, and     	
            \item For all subsets $\{i_{1},\dots, i_{\ell}\}$, $W_{i_1}\cap W_{i_2} \cap \cdots \cap W_{i_{\ell}}$ is a regular cell complex that supports a (minimal) free resolution of $M_{i_1}\cap M_{i_2} \cap \cdots \cap M_{i_{\ell}}$.
        \end{enumerate}

        Then $W_{1}\cup W_{2} \cup \cdots \cup W_{n}$ supports a (minimal) free resolution of $M_1 + M_2 + \cdots + M_n$.
        \end{lemma}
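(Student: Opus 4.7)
The plan is to apply Theorem \ref{2.8}: it suffices to verify that $X=W_1\cup\cdots\cup W_n$ is a well-defined labeled polytopal complex and that $X_{\leq m}$ is acyclic for every multidegree $m$. Compatibility of the labels on overlaps is forced by hypothesis (ii), which requires the labeling inherited from $W_i$ and from $W_j$ to agree on each $W_i\cap W_j$. Since all $M_i$ share one degree, Remark \ref{rmk4.3-P} gives $G(M_1+\cdots+M_n)=\bigcup_i G(M_i)$, so the vertex labels of $X$ exhaust a minimal generating set of the sum.

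Fix a multidegree $m$ and consider the cover $X_{\leq m}=\bigcup_i(W_i)_{\leq m}$. The key compatibility
\[
(W_{i_1}\cap\cdots\cap W_{i_\ell})_{\leq m}=(W_{i_1})_{\leq m}\cap\cdots\cap(W_{i_\ell})_{\leq m}
\]
shows that multi-intersections of cover pieces are exactly the sub-complexes coming from intersected ideals. Applying Theorem \ref{2.8} to the resolutions postulated in (i) and (ii), each such multi-intersection is either empty or acyclic. The Mayer--Vietoris spectral sequence (equivalently, the acyclic nerve lemma) then reduces acyclicity of $X_{\leq m}$ to acyclicity of the nerve of this cover.

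The crucial combinatorial observation is that this nerve is always a (possibly empty) simplex. If $(W_{i_1})_{\leq m},\dots,(W_{i_\ell})_{\leq m}$ are all non-empty, then each $M_{i_j}$ contains a generator dividing $m$; in particular $m\in M_{i_1}\cap\cdots\cap M_{i_\ell}$, and since the intersection is monomial, one of its generators divides $m$, so $(W_{i_1}\cap\cdots\cap W_{i_\ell})_{\leq m}$ is non-empty. Thus the nerve is the full simplex on $\{i:m\in M_i\}$, which is contractible (or empty, in which case $X_{\leq m}$ is itself empty and there is nothing to check). Minimality transfers automatically: every cell of $X$ and each of its facets lie in a common $W_i$, and the minimality of the resolution on $W_i$ already forces strict divisibility of the labels.

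I expect the main technical point to be the Mayer--Vietoris/nerve argument in the polytopal setting, with careful bookkeeping for empty intersections. An alternative is to induct on $n$ using Lemma \ref{2.9} with $I=M_1+\cdots+M_{n-1}$, $J=M_n$, $X=W_1\cup\cdots\cup W_{n-1}$, and $Y=W_n$; the equidegree assumption yields $G(I+J)=G(I)\cup G(J)$ at once, but verifying the remaining hypothesis that $X\cap Y=\bigcup_{i<n}(W_i\cap W_n)$ resolves $I\cap J=\sum_{i<n}(M_i\cap M_n)$ reduces to the same nerve argument applied to the $(n-1)$ complexes $W_i\cap W_n$, so organizing the proof this way gains little.
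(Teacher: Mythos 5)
Your argument is correct, but it is organized quite differently from the paper's. The paper proves the lemma by induction on $n$: setting $M_i'=M_i+M_{n+1}$ and $W_i'=W_i\cup W_{n+1}$, it uses Lemma \ref{2.9} to check that the primed data again satisfy hypotheses (i) and (ii) (the identities $\bigcap_j W'_{i_j}=(\bigcap_j W_{i_j})\cup W_{n+1}$ and $\bigcap_j M'_{i_j}=(\bigcap_j M_{i_j})+M_{n+1}$ hold because intersection distributes over union for sets and for monomial ideals), and then invokes the inductive hypothesis on the $n$ primed ideals. This keeps the entire proof at the level of the two-complex gluing lemma of Sinefakopoulos, with no extra topology. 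Your route instead verifies the Bayer--Sturmfels criterion of Theorem \ref{2.8} directly via the acyclic nerve lemma: the identity $(\bigcap_j W_{i_j})_{\leq m}=\bigcap_j (W_{i_j})_{\leq m}$ together with hypothesis (ii) makes every nonempty multi-intersection of the cover acyclic, and your observation that the nerve is always the full simplex on $\{i: m\in M_i\}$ (because a monomial lying in an intersection of monomial ideals is divisible by one of its generators, hence by a vertex label of $\bigcap_j W_{i_j}$) finishes the acyclicity; your treatment of minimality, via the fact that each face and all of its facets lie in a common $W_i$, is also fine. What your approach buys is transparency about why hypothesis (ii) must be imposed on \emph{all} subsets rather than just pairs, and it would extend to situations where the nerve is merely contractible; what it costs is the invocation of the Mayer--Vietoris spectral sequence, which the paper deliberately avoids. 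Note also that the inductive alternative you sketch and dismiss is not quite the paper's induction: the paper does not intersect everything with $W_n$ and re-run a nerve argument, but rather adds $M_{n+1}$ to each of the first $n$ ideals, so that the hypotheses for the inductive step follow from Lemma \ref{2.9} alone (the ``union of intersections resolves sum of intersections'' statement you worry about is then deferred to Corollary \ref{4.4P}, which is deduced \emph{from} the lemma rather than used to prove it).
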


            \begin{proof} 
            Since $M_1,M_2,\dots,M_n$ are all  equigenerated monomial ideals, 
                \begin{align*}
                    G\left(M_{i_1} + M_{i_2} + \cdots + M_{i_\ell}\right) = G\left(M_{i_1}\right) \cup G\left(  M_{i_2}  \right) \cup \cdots \cup G\left(  M_{i_\ell}\right).
                \end{align*}

            We will prove the statement by induction on the number of ideals, $n$.

            The case $n=1$ is trivial and the case $n=2$ is given by Lemma \ref{2.9}. Assume that 
            the statement holds for $n$. We shall show that it is also true for $n+1$. Let
                \begin{align*}
                    M_1'&=M_1 + M_{n+1}, & W_1'&=W_1 \cup W_{n+1}, \\
                    M_2'&=M_2 + M_{n+1}, & W_2'&=W_2 \cup W_{n+1}, \\
                    & \vdots &&\vdots\\
                    M_n'&=M_n + M_{n+1},  & W_n'&=W_n \cup W_{n+1}.
                \end{align*}

            It follows from Lemma \ref{2.9} that $W_i'$ is a (minimal) free resolution of $M_i'$, for $1\leq i \leq n$, and that 
                \begin{align*}
                    W'_{i_1}\cap W'_{i_2} \cap \cdots \cap W'_{i_{\ell}}=\left(W_{i_1}\cap W_{i_2} \cap \cdots \cap W_{i_{\ell}}\right) \cup W_{n+1}
                \end{align*}
            is a (minimal) free resolution of 
                \begin{align*}
                    \left(M_{i_1}\cap M_{i_2} \cap \cdots \cap M_{i_{\ell}}\right)+M_{n+1}=M_1'\cap M_2' \cap \cdots \cap M_{i_{\ell}}'.
                \end{align*}

            By the inductive hypothesis (applied to $M_{i}'$ and $W_{i}'$), we obtain that 
                \begin{align*}
                    W_1\cup W_2 \cdots \cup W_{n+1} = W_1' \cup W_2' \cup \cdots  \cup W_n'
                \end{align*}
            is a (minimal) free resolution of 
                \begin{align*}
                    M_1 + M_2 + \cdots M_{n+1} = M_1' + M_2' +  \cdots + M_n'.
                \end{align*}
            \end{proof}

        \begin{corollary}
        \phantomsection \label{4.4P}
        Let $M_1,M_2,\dots,M_n$ be   equigenerated monomial ideals, all generated in the same degree. Assume that 
        \begin{enumerate}[{\normalfont (i)}]
            \item $W_1, \dots,W_n$ are regular cell complexes in $\R^N$ that support (minimal) free resolutions of $M_1,\cdots,M_n$, respectively, and
            \item $W_{i_1}\cap W_{i_2} \cap \cdots \cap W_{i_{\ell}}$ is a regular cell complex that supports a (minimal) free resolution of $M_{i_1}\cap M_{i_2} \cap \cdots \cap M_{i_{\ell}}$.
        \end{enumerate}

        Then $\left(W_{1}\cup W_{2} \cup \cdots \cup W_{n-1}\right)\cap W_{n}$ supports a (minimal) free resolution of 
            \begin{align*}
                \left(M_1 + M_2 + \cdots + M_{n-1}\right)\cap M_n.
            \end{align*}
    
        \end{corollary}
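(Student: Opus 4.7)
The plan is to apply Lemma \ref{l:unionmanyboxes} to the ideals $M'_{i}:=M_{i}\cap M_{n}$ and complexes $W'_{i}:=W_{i}\cap W_{n}$, for $i=1,\dots,n-1$, and then recognize the output via two distributive identities.

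For the distributive identities: topologically, $(W_{1}\cup\cdots\cup W_{n-1})\cap W_{n}=\bigcup_{i=1}^{n-1}(W_{i}\cap W_{n})$ by set-theoretic distributivity. Algebraically, since all the $M_{i}$ are monomial ideals,
\[(M_{1}+\cdots+M_{n-1})\cap M_{n}=\sum_{i=1}^{n-1}(M_{i}\cap M_{n}),\]
because a monomial in the left-hand side lies in some $M_{i}$ and in $M_{n}$. Analogously, for any $\{i_{1},\dots,i_{k}\}\subseteq\{1,\dots,n-1\}$, one has $\bigcap_{j}W'_{i_{j}}=W_{i_{1}}\cap\cdots\cap W_{i_{k}}\cap W_{n}$ and $\bigcap_{j}M'_{i_{j}}=M_{i_{1}}\cap\cdots\cap M_{i_{k}}\cap M_{n}$, just by associativity.

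To invoke Lemma \ref{l:unionmanyboxes}, I need the $M'_{i}$ to be equigenerated in a common degree. By hypothesis (ii) of the corollary with $\ell=2$, $W'_{i}=W_{i}\cap W_{n}$ supports a resolution of $M'_{i}$; Theorem \ref{2.8} then tells us that $M'_{i}$ is generated by the labels of the vertices of $W'_{i}$. These vertices are vertices of $W_{i}$, so they carry monomial labels of the common generating degree $d$ of the original $M_{i}$, and hence each $M'_{i}$ is equigenerated in degree $d$. The remaining hypotheses of Lemma \ref{l:unionmanyboxes} follow from hypothesis (ii) of the corollary directly: that $W'_{i}$ resolves $M'_{i}$ is the $\ell=2$ instance, and the general statement that $\bigcap_{j}W'_{i_{j}}$ resolves $\bigcap_{j}M'_{i_{j}}$ is the $\ell=k+1$ instance applied to $\{i_{1},\dots,i_{k},n\}$.

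Lemma \ref{l:unionmanyboxes} then yields that $\bigcup_{i=1}^{n-1}W'_{i}$ supports a (minimal) free resolution of $\sum_{i=1}^{n-1}M'_{i}$, and the two distributive identities rewrite this as the claimed resolution of $(M_{1}+\cdots+M_{n-1})\cap M_{n}$ supported on $(W_{1}\cup\cdots\cup W_{n-1})\cap W_{n}$. The proof is bookkeeping rather than ideas; the only mildly subtle point is confirming that the $M_{i}\cap M_{n}$ remain equigenerated in degree $d$, which uses hypothesis (ii) in an essential way, as intersecting monomial ideals typically raises degrees.
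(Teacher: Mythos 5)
Your proof is correct and follows exactly the paper's route: the authors' entire proof is ``Apply Lemma~\ref{l:unionmanyboxes} to $M_1\cap M_n,\dots,M_{n-1}\cap M_n$,'' and you have simply supplied the bookkeeping (the two distributive identities and the check that the $M_i\cap M_n$ remain equigenerated in degree $d$) that they leave implicit.
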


            \begin{proof}
            Apply Lemma \ref{l:unionmanyboxes} to $M_1\cap M_n,M_2\cap M_n, \cdots,M_{n-1} \cap M_n$.  
            \end{proof}

    \begin{lemma}\cite[Lemma 3.3]{Sinefakopoulos2007}
        \phantomsection\label{l:Achilleasdisjoint}
        Let $I \subset k[x_1,\dots,x_s]$ and $J\subset k[x_{s+1},\dots,x_n]$ be two monomial ideals. Suppose that $X$ and $Y$ are regular cell complexes in $\R^N$ of dimension $s-1$ and $n-s-1$, respectively, that support (minimal) free resolutions of $I$ and $J$, respectively. Then the regular cell complex $X\times Y$ supports a (minimal) free resolution for $IJ.$
		
	\end{lemma}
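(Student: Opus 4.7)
The plan is to identify the cellular complex $\mathcal{F}_{X\times Y}$ with the tensor product of chain complexes $\mathcal{F}_X \otimes_S \mathcal{F}_Y$, and then to show this tensor product is acyclic in positive degrees with $H_0 = IJ$ via a Künneth-style argument that is trivialized by the disjointness of the two variable sets.

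First I would set up the labeled polytopal structure on $X\times Y$. Since products of polytopes are polytopes and faces of a product cell are products of faces, $X\times Y$ is a polyhedral cell complex whose cells are precisely $F\times G$ with $F\in X$ and $G\in Y$; its dimension is $(s-1)+(n-s-1)=n-2$. The natural labeling is $\ell(F\times G)\defeq \ell_X(F)\cdot\ell_Y(G)$. On vertices this exactly produces the generating set of $IJ$, and on a general face the lcm condition still holds because the labels of $X$ involve only $x_1,\dots,x_s$ and those of $Y$ only $x_{s+1},\dots,x_n$, so lcm and multiplication commute. Next I would verify that $\mathcal{F}_{X\times Y}$ coincides with $\mathcal{F}_X\otimes_S \mathcal{F}_Y$ as $S$-complexes. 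The facets of $F\times G$ are either $F'\times G$ or $F\times G'$; under the product orientation one has $\operatorname{sign}(F'\times G,\,F\times G)=\operatorname{sign}(F',F)$ and $\operatorname{sign}(F\times G',\,F\times G)=(-1)^{\dim F}\operatorname{sign}(G',G)$. Because the variables are disjoint, the label quotient factors as $\ell(F\times G)/\ell(F'\times G)=\ell_X(F)/\ell_X(F')$ (and symmetrically on the other side), and substituting into Definition \ref{2.7} reproduces the tensor-product differential $\partial(F\otimes G)=\partial F\otimes G+(-1)^{\dim F}F\otimes\partial G$.

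For acyclicity, I would write $R_1=k[x_1,\dots,x_s]$, $R_2=k[x_{s+1},\dots,x_n]$, so $S=R_1\otimes_k R_2$. Each free $S$-module in $\mathcal{F}_X$ has the form $F'\otimes_k R_2$ with $F'$ a free $R_1$-module, and similarly for $\mathcal{F}_Y$, so $\mathcal{F}_X\otimes_S\mathcal{F}_Y\cong \mathcal{F}'_X\otimes_k \mathcal{F}'_Y$, where $\mathcal{F}'_X$ and $\mathcal{F}'_Y$ are the underlying $R_1$- and $R_2$-resolutions. The Künneth formula over the field $k$ then yields $H_i(\mathcal{F}'_X\otimes_k\mathcal{F}'_Y)\cong \bigoplus_{p+q=i}H_p(\mathcal{F}'_X)\otimes_k H_q(\mathcal{F}'_Y)$, which vanishes for $i>0$ and equals $I\otimes_k J$ for $i=0$. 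The canonical $S$-module map $I\otimes_k J\to IJ$ sending $f\otimes g\mapsto fg$ is an isomorphism because monomial products $mn$ with $m$ a monomial of $R_1$ and $n$ a monomial of $R_2$ are pairwise distinct in $S$ (again thanks to disjoint variables). Thus $H_0=IJ$ and $X\times Y$ supports a resolution of $IJ$; minimality is inherited because every label quotient appearing in the tensor differential already lies in $\mathfrak{m}$ under the minimality hypotheses on $\mathcal{F}_X$ and $\mathcal{F}_Y$.

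The only step that demands genuine care rather than routine bookkeeping is the orientation check in the identification $\mathcal{F}_{X\times Y}\cong \mathcal{F}_X\otimes_S\mathcal{F}_Y$: one must verify that the topological orientation function on the product cell complex agrees (up to a global sign convention) with the Koszul sign rule for a tensor product of chain complexes. Once that is established, the rest of the argument reduces to the Künneth theorem over a field and the elementary identification $I\otimes_k J\cong IJ$.
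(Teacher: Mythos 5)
Your argument is correct, but it is worth noting that the paper itself offers no proof of this lemma: it is imported verbatim as \cite[Lemma 3.3]{Sinefakopoulos2007}. The proof in that source (and the standard one in this setting) runs through the Bayer--Sturmfels criterion quoted as Theorem \ref{2.8}: for each multidegree $b$, factor $b=b'b''$ into its parts supported on $\{x_1,\dots,x_s\}$ and $\{x_{s+1},\dots,x_n\}$, observe that $(X\times Y)_{\leq b}=X_{\leq b'}\times Y_{\leq b''}$ precisely because the two variable sets are disjoint, and conclude acyclicity of the product from acyclicity (or emptiness) of the factors via K\"unneth applied to topological spaces. Your route is global rather than multidegree-local: you identify the whole algebraic complex $\mathcal{F}_{X\times Y}$ with $\mathcal{F}'_X\otimes_k\mathcal{F}'_Y$ and apply K\"unneth once, computing $H_0$ directly as $I'\otimes_k J'\cong IJ$ (where $I'=I\cap k[x_1,\dots,x_s]$ and $J'=J\cap k[x_{s+1},\dots,x_n]$ --- you write $I\otimes_k J$, but the $H_0$ of the underlying $R_1$- and $R_2$-complexes are the ideals of the small rings; the multiplication map and the unique-factorization-of-monomials argument are otherwise exactly right). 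What your approach buys is that it never invokes Theorem \ref{2.8} and exhibits the resolved module explicitly; what it costs is exactly the step you flag, namely reconciling the geometric incidence function on the product complex with the Koszul sign rule. That reconciliation is harmless in any case --- any two incidence functions on the same regular cell complex differ by a diagonal change of basis with entries $\pm 1$, so acyclicity and minimality are insensitive to it --- but the local argument sidesteps it entirely, since it only ever needs the topological acyclicity of the subcomplexes $X_{\leq b'}\times Y_{\leq b''}$. You correctly use the truncated complexes with $H_0$ equal to the ideals (not the augmented ones resolving $S/I$ and $S/J$, whose tensor product would instead resolve $S/(I+J)$), and your minimality claim is sound since the differential entries of the tensor product are exactly those of the factors.
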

	
	\begin{lemma}\cite[Lemma 3.5]{Sinefakopoulos2007}
        \phantomsection\label{2.11}
        Let $I \subset k[x_1,\dots,x_s]$ and $J\subset k[x_{s},\dots,x_n]$ be two monomial ideals such that $ \lvert G(IJ) \rvert = \lvert G(I) \rvert \cdot \lvert G(J) \rvert$. Suppose that $X$ and $Y$ are regular cell complexes in some $\R^N$ of dimension $s-1$ and $n-s$, respectively, that support (minimal) free resolutions of $I$ and $J$, respectively. Then the regular cell complex $X\times Y$ supports a (minimal) free resolution for $IJ.$
		
	\end{lemma}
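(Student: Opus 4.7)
My plan is to apply the acyclicity criterion of Theorem \ref{2.8} to the product complex $X \times Y$, equipped with the natural labeling $\ell(u,v) = \ell(u)\ell(v)$ on vertices. A variable-by-variable calculation shows that for any face $F \times G$, the lcm of its vertex labels satisfies $\ell(F\times G) = \ell(F)\ell(G)$: the only variable where the two factors interact is the shared $x_s$, and its exponent in the lcm equals $\max_u \deg_{x_s}\ell(u) + \max_v \deg_{x_s}\ell(v)$, matching that of $\ell(F)\ell(G)$. The hypothesis $|G(IJ)| = |G(I)|\cdot|G(J)|$ ensures that distinct vertices carry distinct labels, so the labeling is valid and the vertex ideal is exactly $IJ$.

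For acyclicity of $(X\times Y)_{\leq m}$ at an arbitrary multidegree $m$, I would write $m = \mu x_s^c \nu$ with $\mu \in k[x_1,\dots,x_{s-1}]$ and $\nu \in k[x_{s+1},\dots,x_n]$. Splitting $\ell(F) = \alpha(F)x_s^{a(F)}$ and $\ell(G) = x_s^{b(G)}\gamma(G)$ analogously, the divisibility $\ell(F)\ell(G)\mid m$ decouples into $\alpha(F)\mid\mu$, $\gamma(G)\mid\nu$, and the coupling condition $a(F)+b(G)\leq c$. Setting $U_a = X_{\leq \mu x_s^a}$ and $V_b = Y_{\leq x_s^b\nu}$, each of these is acyclic by Theorem \ref{2.8} applied to $X$ and $Y$, and one obtains
\[(X \times Y)_{\leq m} = \bigcup_{a=0}^{c} U_a \times V_{c-a}.\]

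The main step is to glue these pieces together. I would induct on the partial unions $Z_k = \bigcup_{a=0}^{k} U_a \times V_{c-a}$. The key simplification is that the families $\{U_a\}$ and $\{V_b\}$ are each nested, so the intersection collapses: $Z_{k-1} \cap (U_k \times V_{c-k}) = U_{k-1} \times V_{c-k}$, which is a product of two acyclic complexes and hence acyclic by the K\"unneth formula. A reduced Mayer--Vietoris sequence then carries acyclicity from $Z_{k-1}$ and $U_k\times V_{c-k}$ to $Z_k$, and at $k=c$ one concludes that $(X\times Y)_{\leq m}$ is acyclic. Minimality of $\mathcal{F}_{X\times Y}$ follows because every boundary coefficient is a ratio of labels that factors as a ratio from $X$ times a ratio from $Y$, each already lying in the maximal ideal. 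The principal obstacle throughout is the coupling induced by the shared variable $x_s$, which is exactly what rules out a direct reduction to the disjoint-variable Lemma \ref{l:Achilleasdisjoint}; isolating this coupling into the single integer parameter $c$ and peeling off one slice at a time is what makes the Mayer--Vietoris induction go through.
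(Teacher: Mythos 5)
Your argument is essentially correct, and I should note up front that the paper itself does not prove this statement: Lemma \ref{2.11} is quoted verbatim from Sinefakopoulos \cite{Sinefakopoulos2007}*{Lemma 3.5}, so there is no in-paper proof to compare against. Your reconstruction is the natural one and is sound: the label computation $\ell(F\times G)=\ell(F)\ell(G)$ is right (the shared variable $x_s$ causes no trouble because $\max_{(u,v)}(a_u+b_v)=\max_u a_u+\max_v b_v$ over a full product of index sets), the role of the hypothesis $|G(IJ)|=|G(I)|\cdot|G(J)|$ is correctly identified, the decomposition $(X\times Y)_{\leq m}=\bigcup_{a=0}^{c}U_a\times V_{c-a}$ is exact, and the telescoping identity $Z_{k-1}\cap(U_k\times V_{c-k})=U_{k-1}\times V_{c-k}$ follows from the nestedness of the two filtrations. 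Two small points deserve explicit treatment. First, some of the pieces $U_a$, $V_b$ may be empty, and Mayer--Vietoris with an empty intersection does not preserve acyclicity; you should observe that this degeneracy is harmless here, since if $U_{k-1}=\emptyset$ then $Z_{k-1}=\emptyset$ and $Z_k=U_k\times V_{c-k}$ outright, while if $V_{c-k}=\emptyset$ then $Z_k=Z_{k-1}$, so the genuine gluing step only ever occurs when all three of $Z_{k-1}$, $U_k\times V_{c-k}$, and their intersection are nonempty and acyclic. Second, your closing sentence on minimality overstates slightly: a facet of $F\times G$ is either $F'\times G$ or $F\times G'$, so the boundary coefficient is $\ell(F)/\ell(F')$ or $\ell(G)/\ell(G')$ --- exactly one nontrivial ratio, not a product of two ratios ``each'' in the maximal ideal; the conclusion (the coefficient is a nonunit, by minimality of $X$ or of $Y$) is of course still correct. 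With those repairs the proof is complete, and it is in the same spirit as the source: the entire content is isolating the single coupling parameter $c$ carried by $x_s$, which is precisely what distinguishes this lemma from the disjoint-variable case of Lemma \ref{l:Achilleasdisjoint}.
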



\section{Symmetric polytopal resolutions of $I=(x_{1},\dots, x_{n})^{n}$ and $I\smallsetminus x_{1}\cdots x_{n}$}\label{sec4} 

    In this section we build the intuition for the rest of the paper.  Unfortunately, when we work in the full generality of $(x_{1},\dots, x_{n})^{d}$, the notation gets messy enough to obscure the ideas.  This section covers the special case where the degree is equal to the number of variables and we have the full symmetry arising from working around a ``central'' monomial (in this case, the product of the variables).  Everything here is a special case of \S\ref{sec5} and \S\ref{sec6}, so we further emphasize the intuition by suppressing the proofs.  Readers who don't need our geometric intuition can safely skip this section and jump directly to \S\ref{sec5}.

    The goal of this section is to build a resolution of   $I=\mathfrak{m}^{n}=(x_{1},\dots, x_{n})^{n}$ which is a polytopal subdivision of the $(n-1)$-dimensional simplex, and is also symmetric about its center $m=x_{1}\cdots x_{n}$.  (In particular, no variable is preferenced over the others, so all corners of the simplex will look the same.)
    Our strategy is to let the cyclic group act on the principal Borel ideal $I_{1}=\Borel(m)$ and its complex of boxes.  This produces $n$ polytopal complexes which intersect in positive codimension; gluing them together yields the desired resolution (Theorem \ref{th:centralPinchedPower}).

    Using this symmetric resolution, we then construct a resolution for the ``pinched power'' ideal $\widehat{I}$, generated by all degree $n$ monomials except the central monomial $m=x_{1}\cdots x_{n}$.  This is accomplished in Construction \ref{c:centralPinchedPower} by merging all boxes containing $m$ into a single larger polyhedron.

        \begin{notation}
            For each $i$, let $I_{i}=\Borel_{Q_{i}}(m)$ be the principal $Q_{i}$-Borel ideal generated by $m$.  Denote by $X_{1}$ the complex of boxes resolving $I_{1}$, and by $X_{i}$ the appropriate cyclic permutation of $X_{1}$.  Observe that $X_{i}$ minimally resolves $I_{i}$; we abuse notation by referring to $X_{i}$ as the \emph{complex of boxes resolving $I_{i}$}.
        \end{notation}

        \begin{example}  
            When $n=3$, the ideals $I_{i}$ are $I_{1}=(abc,a^2{c},ab^{2},a^{2}b,a^{3})$, $I_{2}=(abc,ab^{2},bc^{2},b^{2}c,b^{3})$ and $I_{3}=(abc,bc^{2},a^{2}c,ac^{2},c^{3})$.  In Figure \ref{f:3varcomplexes}, the complexes of boxes resolving these ideals are shown in red, yellow, and blue, respectively.

            \begin{figure}[hbt]
            \centering
                \begin{tikzpicture}[scale=0.8][line join = round, line cap = round]
                
                    \fill[rose] (-3,1.5) -- (1,1.5) -- (0,3) -- (-2,3) --  (-3,1.5);
				\fill[sand]  (0,3) --  (1,1.5) -- (3,1.5) --(1,4.5) --  (0,3);
				\fill[cyan] (0,6) --(1,4.5) -- (0,3) -- (-2,3) --(0,6);
				
				\draw[-, thick]  (-3,1.5) -- (3,1.5) -- (0,6) -- (-3,1.5);
				\draw[-, thick]  (-2,3) -- (-1,1.5);
				\draw[-, thick]  (-2,3) -- (0,3);
				\draw[-, thick]  (-1,4.5) -- (1,4.5);
				\draw[-, thick]  (1,4.5) -- (0,3) -- (1,1.5) -- (2,3);

				\fill[] (-3,1.5) circle (3pt) node[below] {$a^3$};
				\fill[] (-1,1.5) circle (3pt) node[left,below] {$a^2b$};
				
				\fill[] (1,1.5) circle (3pt) node[below] {$ab^2$};
				\fill[] (3,1.5) circle (3pt) node[below] {$b^3$};
				
				\fill[] (-2,3) circle (3pt) node[left] {$a^2c$};
				\fill[blue] (0,3) circle (3pt) node[right] {$abc$};
				\fill[] (2,3) circle (3pt) node[right] {$b^2c$};
				
				\fill[] (-1,4.5) circle (3pt) node[left] {$ac^2$};
				\fill[] (1,4.5) circle (3pt) node[right] {$bc^2$};
				
				\fill[] (0,6) circle (3pt) node[above] {$c^3$};

			\end{tikzpicture}
            \caption{The ideals $I_{1}$, $I_{2}$, and $I_{3}$ inside $(a,b,c)^{3}$.}
            \label{f:3varcomplexes}
            \end{figure}

        When $n=4$, we have $I_{1}=(abcd,ab^{2}d,a^{2}cd,a^{2}bd,a^{3}d, abc^{2},a^{2}c^{2},ab^{2}c,a^{2}bc,a^{3}c,ab^{3},a^{2}b^{2},a^{3}b,a^{4})$.  $I_{2}$, $I_{3}$, and $I_{4}$ are cyclic permutations of $I_{1}$.  The complexes of boxes resolving these ideals are shown in Figure \ref{f:4varcomplexes}.

        \begin{tiny}
       \begin{figure}
        \begin{center}
            \begin{tikzpicture}[scale=0.12][line join = round, line cap = round] 
			    \draw[-, thick]   (5,24)   --    (-10,6)  ; 
                \draw[-, thick]    (-10,6)   --  (2,2)  ;  
                \draw[-, thick]    (7,8) --   (5,24)    ; 
				\draw[-, thick]   (6,16)  --  (0,18) ; 
			    \draw[-, thick]   (7,8) --     (-5,12) ; 
                \draw[-, thick]  (6,16)  --  (-4,4)   ; 
                \draw[-, thick]  (6,16)  --   (10,18)  ; 
                \draw[-, thick,dashed]  (10,18)    --   (0,18) ; 
                \draw[-, thick,dashed]    (1,10)  --  (5,12) ; 
                \draw[-, thick,dashed]    (5,12) --   (-5,12)  ; 
                \draw[-, thick,dashed]   (10,18)  --  (0,6)  ; 
                \draw[-, thick]      (5,24) -- (10,18) ; 
                \draw[-, thick,dashed]   (-10,6)   -- (0,6)   ; 

                \draw[-, thick,dashed]  (-4,4)   --     (0,6)  ; 

                \draw[-, thick] (7,8)    (7,8) --   (11,10)   ; 
                \draw[-, thick] (2,2)  (11,10) --  (10,18)  ; 
                \draw[-, thick,dashed]  (5,12) --  (11,10) ; 
                \draw[-, thick]   (7,8) --  (2,2) ; 
                \draw[-, thick]   (11,10) -- (6,4) --  (2,2) ; 
                \draw[-, thick,dashed]   (6,4)   -- (0,6)   ; 

                \fill[] (-10,6) circle (8pt) node[left] {$a^3d$}; 
                \fill[] (-4,4) circle (8pt) node[below] {$a^2bd$}; 
                \fill[] (2,2) circle (8pt) node[below] {$ab^2d$}; 
                \fill[] (0,6) circle (8pt) node[right] {$a^2cd$};
                \fill[red] (6,4) circle (8pt) node[right] {$abcd$}; 

                \fill[] (5,12) circle (8pt) node[below] {$acd^2$}; 
                \fill[] (0,18) circle (8pt) node[left] {$ad^3$}; 
                \fill[] (-5,12) circle (8pt) node[left] {$a^2d^2$}; 
                \fill[] (1,10) circle (8pt) node[left] {$abd^2$}; 

                \fill[] (7,8) circle (8pt) node[right] {$b^2d^2$}; 

                \fill[] (5,24) circle (8pt) node[above] {$d^4$}; 

                \fill[] (6,16) circle (8pt) node[right] {$bd^3$}; 

                \fill[] (10,18) circle (8pt) node[right] {$cd^3$}; 

                \fill[] (11,10) circle (8pt) node[right] {$bcd^2$}; 

			\end{tikzpicture}
        \end{center}

        \begin{multicols}{2}
           \begin{tikzpicture}[scale=0.12][line join = round, line cap = round] 
		      
			    \draw[-, thick]  (-15,0)  --  (3,-6)   ; 
                \draw[-, thick]   (3,-6)  -- (11,-2) ;  
                \draw[-, thick,dashed]    (5,0)   --  (-15,0); 
				\draw[-, thick,dashed]  (-5,0)  --  (-9,-2) ; 
			    \draw[-, thick,dashed]   (5,0) --  (-3,-4) ; 
                \draw[-, thick,dashed]  (-5,0)  --  (7,-4) ; 
                \draw[-, thick,dashed] (-5,0)   -- (-10,6)    ; 
                \draw[-, thick]   (-10,6)  --   (-9,-2); 
                \draw[-, thick,dashed]  (1,-2)  --  (-4,4)  ; 
                \draw[-, thick]     (-4,4) --  (-3,-4) ; 
                \draw[-, thick]  (-10,6)  --  (2,2) ; 
                \draw[-, thick]  (-15,0)  -- (-10,6) ; 

                \draw[-, thick] (7,-4) -- (2,2)    ; 
                \draw[-, thick]  (2,2)  -- (3,-6)  ; 

                \draw[-, thick,dashed] (5,0)  -- (0,6)    ; 
                \draw[-, thick]  (0,6) -- (-10,6)  ; 
                \draw[-, thick]  (-4,4) --   (0,6)  ; 
                \draw[-, thick,dashed]  (5,0) --   (11,-2)  ; 
                \draw[-, thick]   (0,6) --   (6,4) -- (2,2)  ; 
                \draw[-, thick]      (6,4) -- (11,-2) ; 

			\fill[] (-15,0) circle (8pt) node[left] {$a^4$}; 
                \fill[] (-9,-2) circle (8pt) node[below] {$a^3b$}; 
                \fill[] (-3,-4) circle (8pt) node[below] {$a^2b^2$}; 
                \fill[] (3,-6) circle (8pt) node[below] {$ab^3$}; 
                \fill[] (7,-4) circle (8pt) node[below] {$ab^2c$}; 
                \fill[] (1,-2) circle (8pt) node[below] {$a^2bc$}; 
                \fill[] (-5,0) circle (8pt) node[below] {$a^3c$}; 
                \fill[] (-10,6) circle (8pt) node[above] {$a^3d$}; 
                \fill[] (-4,4) circle (8pt) node[right] {$a^2bd$}; 
                \fill[] (2,2) circle (8pt) node[left] {$ab^2d$}; 
                \fill[] (0,6) circle (8pt) node[above] {$a^2cd$};
                \fill[red] (6,4) circle (8pt) node[right] {$abcd$}; 
                \fill[] (11,-2) circle (8pt) node[right] {$abc^2$}; 
                \fill[] (5,0) circle (8pt) node[right] {$a^2c^2$}; 

			\end{tikzpicture}

   \columnbreak

   \begin{tikzpicture}[scale=0.12][line join = round, line cap = round] 
		      
			\draw[-, thick]   (25,0)   --   (10,18)   ; 
                \draw[-, thick]    (10,18)   --   (0,6) ;  
                \draw[-, thick,dashed]   (5,0)  --   (25,0)  ; 
			\draw[-, thick,dashed]   (15,0)  --  (20,6)  ; 
			\draw[-, thick,dashed]  (5,0)  --  (10,6)   ; 
                \draw[-, thick,dashed]  (10,6)  --  (15,12)   ; 
                \draw[-, thick,dashed]   (15,0)  --    (10,6)  ; 
                \draw[-, thick,dashed]   (10,6)  --   (5,12)  ; 
                \draw[-, thick,dashed]  (15,0)  --    (21,-2) ; 
                \draw[-, thick]   (21,-2)   --  (20,6)  ; 
                \draw[-, thick,dashed]   (10,6)   --  (16,4) ; 
                \draw[-, thick]  (16,4)   --   (15,12)  ; 
                \draw[-, thick]    (21,-2) --  (11,10)  ; 
                \draw[-, thick]   (25,0)   -- (21,-2) ; 

                \draw[-, thick]     (5,12) --   (11,10)    ; 
                \draw[-, thick]    (11,10)  --   (10,18)  ; 

                \draw[-, thick] (7,8)  (5,0) --   (11,-2)    ; 
                \draw[-, thick] (11,-2)    --  (21,-2)  ; 
                \draw[-, thick]  (16,4) --  (11,-2) ; 
                \draw[-, thick]   (5,0) --   (0,6) ; 
                \draw[-, thick]   (0,6) --  (6,4)--  (11,10) ; 
                \draw[-, thick]     (6,4)-- (11,-2)  ; 

                \fill[] (0,6) circle (8pt) node[left] {$a^2cd$};
                \fill[red] (6,4) circle (8pt) node[right] {$abcd$}; 
                \fill[] (11,-2) circle (8pt) node[below] {$abc^2$}; 
                \fill[] (5,0) circle (8pt) node[below] {$a^2c^2$}; 

                \fill[] (15,0) circle (8pt) node[below] {$ac^3$}; 
                \fill[] (10,6) circle (8pt) node[left] {$ac^2d$}; 
                \fill[] (5,12) circle (8pt) node[left] {$acd^2$}; 
                
                \fill[] (15,12) circle (8pt) node[right] {$c^2d^2$}; 

                \fill[] (25,0) circle (8pt) node[right] {$c^4$}; 
                
                \fill[] (21,-2) circle (8pt) node[below] {$bc^3$}; 

                \fill[] (10,18) circle (8pt) node[above] {$cd^3$}; 
                \fill[] (20,6) circle (8pt) node[right] {$c^3d$}; 

                \fill[] (11,10) circle (8pt) node[below] {$bcd^2$}; 

                \fill[] (16,4) circle (8pt) node[left] {$bc^2d$}; 
			\end{tikzpicture}
            
        \end{multicols}

        \begin{tikzpicture}[scale=0.12][line join = round, line cap = round] 
		      
			    \draw[-, thick]   (9,-8)  --   (21,-2)   ; 
                \draw[-, thick]   (21,-2)   --  (11,10) ;  
                \draw[-, thick]    (7,8)   --   (9,-8) ; 
				\draw[-, thick]   (8,0)  --   (13,-6); 
			    \draw[-, thick]  (7,8) --  (17,-4)  ; 
                \draw[-, thick]  (8,0)  --   (16,4) ; 
                \draw[-, thick]   (8,0)  --   (3,-6)   ; 
                \draw[-, thick,dashed]    (3,-6) --    (13,-6); 
                \draw[-, thick,dashed]   (12,2)  --   (7,-4) ; 
                \draw[-, thick,dashed]      (7,-4) --   (17,-4) ; 
                \draw[-, thick,dashed]   (3,-6)  --  (11,-2) ; 
                \draw[-, thick]   (9,-8)  --  (3,-6); 

                \draw[-, thick,dashed]   (16,4) --   (11,-2)   ; 
                \draw[-, thick,dashed]   (11,-2)  --  (21,-2) ; 

                \draw[-, thick] (7,8)   --  (2,2)   ; 
                \draw[-, thick] (2,2)   -- (3,-6)  ; 
                \draw[-, thick,dashed]  (7,-4) --  (2,2) ; 
                \draw[-, thick]  (7,8) --   (11,10) ; 
                \draw[-, thick,dashed]   (2,2) -- (6,4) --  (11,10) ; 
                \draw[-, thick,dashed]     (6,4) --  (11,-2) ; 

                \fill[] (3,-6) circle (8pt) node[below] {$ab^3$}; 
                \fill[] (7,-4) circle (8pt) node[below] {$ab^2c$}; 
                
                \fill[] (2,2) circle (8pt) node[left] {$ab^2d$}; 
                
                \fill[red] (6,4) circle (8pt) node[right] {$abcd$}; 
                \fill[] (11,-2) circle (8pt) node[below] {$abc^2$}; 

                \fill[] (9,-8) circle (8pt) node[below] {$b^4$}; 
                \fill[] (7,8) circle (8pt) node[above] {$b^2d^2$}; 

                \fill[] (17,-4) circle (8pt) node[below] {$b^2c^2$}; 

                \fill[] (8,0) circle (8pt) node[left] {$b^3d$}; 
                \fill[] (13,-6) circle (8pt) node[below] {$b^3c$}; 

                \fill[] (21,-2) circle (8pt) node[right] {$bc^3$}; 

                \fill[] (11,10) circle (8pt) node[above] {$bcd^2$}; 
                \fill[] (12,2) circle (8pt) node[right] {$b^2cd$}; 

                \fill[] (16,4) circle (8pt) node[right] {$bc^2d$}; 
			\end{tikzpicture}
       \caption{The complexes of boxes resolving $I_{1}$ (left), $I_{2}$ (bottom), $I_{3}$ (right), and $I_{4}$ (top) inside $(a,b,c,d)^{4}$.}
        \label{f:4varcomplexes}
   \end{figure}
   \end{tiny}

        \end{example}

    \begin{proposition}        
       The $I_{i}$, and the box complexes $X_{i}$ resolving them, satisfy the hypotheses of Lemma 3.4.  
    \end{proposition}
    \begin{proof}
        This is a special case of Lemma \ref{l:truedecomposition} and Proposition \ref{3.4}.
    \end{proof}

    \begin{definition}
        Set $\displaystyle X=\bigcup_{i=1}^{n}X_{i}$.
    \end{definition}
    
    \begin{theorem}
    \label{th:centralPinchedPower}
        The polytopal complex $X$ supports the minimal resolution of $I$.
    \end{theorem}

	\begin{example}
		\phantomsection\label{e:threevariablepicture-1}
		
		The complex in Figure \ref{f:3varcomplexes} supports the minimal  resolution of $(a,b,c)^3$.

        The complex obtained by gluing the four complexes in Figure \ref{f:4varcomplexes} supports the minimal resolution of $(a,b,c,d)^{4}$.

	\end{example}

We now modify the minimal resolution $X$ to describe a symmetric polytopal resolution of the ``pinched power'' ideal $\widehat{I}$ obtained by deleting $m=x_{1}\dots x_{n}$ from the generating set for $I$.  The plan is to simply remove from $X$ all faces containing $m$.  Naively, doing so will cause the $n$ facets adjoining $m$ to lose their boundaries; we fix the issue by merging these cells into a single polytope.

\begin{notation}
    Let $\widehat{I}$ be the ideal generated by all degree $n$ monomials except $m=x_{1}\dots x_{n}$.  We also use hats on our previous notation to denote the removal of $m$, to wit:
    \begin{itemize}
        \item $\widehat{I_{i}}$ is the $Q_{i}$-Borel ideal generated by all monomials of $I_{i}$ except $m$.
        \item $\widehat{X_{i}}$ is the complex of boxes resolving $\widehat{I_{i}}$.
    \end{itemize}
    We also define, for each $i$, the box $Y_{i}=[x_{i}]\times [x_{i},x_{i+1}]\times \dots \times [x_{i-2},x_{i-1}]$, the unique facet of $X_{i}$ containing $m$, and $Y=\bigcup Y_{i}$.  

    Finally, set $J_{i}$ equal to the ideal generated by the vertices of $Y_{i}$, $J=\sum J_{i}$, and $\widehat{J}$ the ideal generated by all monomials of $J$ except $m$.
\end{notation}

    \begin{example}\label{e:Y}
        In three variables, $Y_{1}$, $Y_{2}$, and $Y_{3}$ are rhombi and $Y$ is their union.  In four variables, $Y_{1}, \dots, Y_{4}$ are parallelepipeds and $Y$ is their union.

    \begin{figure}[hbt] 
    \begin{multicols}{2}

    \begin{tikzpicture}[scale=1][line join = round, line cap = round]
				
				\fill[rose]  (-1,1.5) --  (-2,3) --  (0,3) -- (1,1.5) --  (-1,1.5) ;
				\fill[sand]  (1,1.5) -- (0,3) -- (1,4.5) -- (2,3) -- (1,1.5);
				\fill[cyan] (-2,3) --(0,3) --   (1,4.5) -- (-1,4.5);
				
				\draw[-, thick]    (-1,1.5) --  (-2,3) --  (0,3) -- (1,1.5) --  (-1,1.5)  ;
				 
				\draw[-, thick]    (0,3) -- (1,4.5) --  (2,3) -- (1,1.5) ;

                \draw[-, thick]    (-2,3) --  (-1,4.5) --  (1,4.5)  ; 
				 
				\fill[] (-1,1.5) circle (2pt) node[left,below] {$a^2b$};
				
				\fill[] (1,1.5) circle (2pt) node[below] {$ab^2$}; 
				
				\fill[] (-2,3) circle (2pt) node[left] {$a^2c$};
				\fill[blue] (0,3) circle (2pt) node[right] {$abc$};
				\fill[] (2,3) circle (2pt) node[right] {$b^2c$};
				
				\fill[] (-1,4.5) circle (2pt) node[above] {$ac^2$};
				\fill[] (1,4.5) circle (2pt) node[above] {$bc^2$}; 
				
			\end{tikzpicture}

    \columnbreak

         \begin{tikzpicture}[scale=0.23][line join = round, line cap = round]

                \draw[-, dashed]   (6,4) -- (0,6)  ; 
                \draw[-, dashed] (0,6)   -- (-4,4)  ; 
                \draw[-, thick]   (-4,4) -- (2,2)  ; 
                \draw[-, dashed] (2,2)   -- (6,4)  ; 
                \draw[-, thick]   (-4,4) --   (1,-2) ; 
                \draw[-, thick]   (1,-2) -- (7,-4)   ; 
                \draw[-, thick]   (7,-4) --   (2,2) ; 
                \draw[-, thick]   (7,-4)  --  (11,-2)  ; 
                \draw[-, dashed]    (11,-2)  --   (6,4)  ; 
                \draw[-, dashed] (0,6)  --  (5,0)  ; 
                \draw[-, dashed] (1,-2)  --  (5,0)  ; 
                \draw[-, dashed] (11,-2)  --  (5,0)  ; 
                \draw[-, thick]  (1,10)  -- (5,12) --  (11,10) -- (7,8) -- (1,10)  ; 
                \draw[-, thick]   (-4,4)  --  (1,10)   ; 
                \draw[-, thick]   (2,2)   --  (7,8)   ; 
                \draw[-, thick]   (7,8)   --    (12,2) --  (7,-4)  ; 
                \draw[-, thick]    (11,10)  --   (16,4)   --   (11,-2)  ; 
                \draw[-, thick]      (12,2)--    (16,4)   ; 
                \draw[-, dashed]   (6,4) --    (11,10) ; 
                \draw[-, dashed]   (0,6) --   (5,12)   ; 
                \draw[-, dashed]    (5,12) --    (10,6)  -- (16,4); 
                \draw[-, dashed]   (10,6)   --    (5,0)    ; 

                \fill[red] (6,4) circle (8pt) node[right] {}; 
				 
                \fill[] (11,-2) circle (8pt) node[right] {$abc^2$}; 
                  
                \fill[] (7,8) circle (8pt) node[above] {$b^2d^2$}; 
                 
                \fill[] (5,12) circle (8pt) node[above] {$acd^2$}; 
                \fill[] (10,6) circle (8pt) node[right ] { }; 
                \fill[] (10,6.5) circle (0 pt) node[right ] {$ac^2d$}; 
                \fill[] (1,10) circle (8pt) node[left] {$abd^2$}; 
                 
                \fill[] (12,2) circle (8pt) node[left] { }; 
                \fill[] (11.7,2) circle (0pt) node[left] {$b^2cd$}; 
                \fill[] (16,4) circle (8pt) node[right] {$bc^2d$}; 
                \fill[] (11,10) circle (8pt) node[right] {$bcd^2$}; 
                \fill[] (2,2) circle (8pt) node[left] { }; 
                \fill[] (2,1.6) circle (0pt) node[left] {$ab^2d$}; 
                \fill[] (-4,4) circle (8pt) node[left] {$a^2bd$}; 

                \fill[] (0,6) circle (8pt) node[right] { }; 
                \fill[] (0.3,6.4) circle (0pt) node[right] {$a^2cd$ }; 
                \fill[] (5,0) circle (8pt) node[right] {}; 
                \fill[] (5.3,0.3) circle (0pt) node[right] {$a^2c^2$}; 
                 
                \fill[] (1,-2) circle (8pt) node[below] {$a^2bc$}; 
                 
                \fill[] (7,-4) circle (8pt) node[below] {$ab^2c$}; 
                 
			\end{tikzpicture}

    \end{multicols}
    
            \caption{$Y$ for the ideals $(a,b,c)^{3}$ (on the left) and $(a,b,c,d)^{4}$ (on the right).}
            \label{f:Y}
    \end{figure}
        
    \end{example}

    \begin{proposition}
        $Y$ supports the minimal resolution of $J$.
    \end{proposition}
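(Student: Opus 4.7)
The plan is to invoke Lemma \ref{l:unionmanyboxes} on the three boxes $Y_{1},Y_{2},Y_{3}$ and their corresponding ideals $J_{1},J_{2},J_{3}$. Because each $J_{i}$ is equigenerated in degree $3$ (being a product of three linear primes), Remark \ref{rmk4.3-P} guarantees $G(J_{i_{1}}+\dots+J_{i_{\ell}})=G(J_{i_{1}})\cup\dots\cup G(J_{i_{\ell}})$, so the labelling hypothesis of the lemma is automatic. What remains to check is (i) that each $Y_{i}$ supports a minimal free resolution of $J_{i}$, and (ii) that every nonempty intersection $Y_{i_{1}}\cap\dots\cap Y_{i_{\ell}}$ supports a minimal free resolution of $J_{i_{1}}\cap\dots\cap J_{i_{\ell}}$.

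For (i), each $Y_{i}=\Gamma_{Q_{i}}(abc)$ is by construction a product of three simplices (for instance $Y_{1}=\Delta_{[1,1]}\times\Delta_{[1,2]}\times\Delta_{[2,3]}$), whose vertex labels are precisely the minimal generators of the product of the corresponding prime ideals $J_{i}=\mathbf{m}_{[i,i]}\mathbf{m}_{[i,i+1]}\mathbf{m}_{[i+1,i+2]}$ (indices cyclic). Since the supports of consecutive factors overlap in at most one variable, iterating Lemma \ref{l:Achilleasdisjoint} (or applying Lemma \ref{2.11}) shows that $Y_{i}$ minimally resolves $J_{i}$.

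For (ii), a direct inspection of Figure \ref{f:Y3} records the topological intersections: the three squares meet pairwise in a single edge and triply in a single vertex, namely $Y_{1}\cap Y_{2}=[ab^{2},abc]$, $Y_{1}\cap Y_{3}=[a^{2}c,abc]$, $Y_{2}\cap Y_{3}=[bc^{2},abc]$, and $Y_{1}\cap Y_{2}\cap Y_{3}=\{abc\}$. Matching these to the algebraic intersections is a short calculation: for instance $J_{1}\cap J_{2}=(ab^{2},abc)=ab\cdot(b,c)$, so its minimal resolution is the twisted Koszul complex supported on the edge $[ab^{2},abc]$; the other two pairwise cases are symmetric, and the triple intersection $J_{1}\cap J_{2}\cap J_{3}=(abc)$ is principal and resolved by a single labelled vertex.

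With (i) and (ii) in place, Lemma \ref{l:unionmanyboxes} immediately gives that $Y$ supports a minimal polytopal resolution of $J_{1}+J_{2}+J_{3}$, and a short expansion confirms $J_{1}+J_{2}+J_{3}=\mathbf{m}_{[1,2]}\mathbf{m}_{[2,3]}\mathbf{m}_{[3,1]}=J$. The main obstacle is the second half of (ii): one must argue that no higher-degree minimal generators appear in the ideal intersections beyond those already visible as labels of the shared faces. In this three-variable case the check is immediate by hand, but in the $n$-variable generalization treated in Section \ref{sec6} the corresponding statement will require a structural argument analogous to Theorem \ref{3.4}.
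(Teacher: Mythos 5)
Your proof is correct and follows the paper's route: the paper states this proposition without a separate argument and later derives the general case (Theorem \ref{3.15} and Corollary \ref{c:3.17}) by observing that the $Y_{i}$ and $J_{i}$ satisfy the hypotheses of Lemma \ref{l:unionmanyboxes}, which is exactly the reduction you carry out, with the hypotheses verified explicitly for $n=3$. One small caution: Lemma \ref{l:Achilleasdisjoint} requires disjoint variable supports, so for the factorization of $J_{i}$ (whose consecutive factors share a variable) it is Lemma \ref{2.11} that actually applies; your parenthetical alternative is the correct citation.
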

    \begin{proof}
        This is a special case of Theorem \ref{th:3.17}.
    \end{proof}

    \begin{construction}
    \label{c:centralPinchedPower}
           Refer to the polytope obtained by removing $m$ from $Y$ as $\widehat{Y}$.  More precisely, consider the geometric realization of $Y$ attained by placing each vertex $\prod x_{i}^{e_{i}}$ at the point $(e_{1},\dots, e_{n})$ inside $\mathbb{R}^{n}$.  Let $\widehat{Y}$ be the convex hull of all these vertices.  Then $\widehat{Y}$ is an $(n-1)$ dimensional polytope (inside the hyperplane $\sum x_{i}=n$) with a single facet, which contains $m$ on its interior.  The lower-dimensional faces of $\widehat{Y}$ are precisely the faces of $Y$ which don't contain $m$.     
    \end{construction}

    \begin{example}
    In three variables, $\widehat{Y}$ is the hexagon on the left in Figure \ref{f:YHat}.  In four variables, $\widehat{Y}$ is the rhombic dodecahedron on the right.

    \begin{figure}[hbt]\label{f:YHat}
    \begin{multicols}{2}

    \begin{tikzpicture}[scale=1][line join = round, line cap = round]
				
				\fill[light-gray] (-2,3) -- (-1,4.5) -- (1,4.5) -- (2,3) -- (1,1.5) -- (-1,1.5) -- (-2,3);
				
				\draw[-, thick]    (-2,3) -- (-1,4.5) -- (1,4.5) -- (2,3) -- (1,1.5) -- (-1,1.5) -- (-2,3)  ; 
				 
				\fill[] (-1,1.5) circle (2pt) node[left,below] {$a^2b$};
				
				\fill[] (1,1.5) circle (2pt) node[below] {$ab^2$}; 
				
				\fill[] (-2,3) circle (2pt) node[left] {$a^2c$};
				
				\fill[] (2,3) circle (2pt) node[right] {$b^2c$};
				
				\fill[] (-1,4.5) circle (2pt) node[above] {$ac^2$};
				\fill[] (1,4.5) circle (2pt) node[above] {$bc^2$}; 
				
			\end{tikzpicture}

    \columnbreak

    \begin{tikzpicture}[scale=0.23][line join = round, line cap = round]

                \draw[-, dashed] (0,6)   -- (-4,4)  ; 
                \draw[-, thick]   (-4,4) -- (2,2)  ; 
                 
                \draw[-, thick]   (-4,4) --   (1,-2) ; 
                \draw[-, thick]   (1,-2) -- (7,-4)   ; 
                \draw[-, thick]   (7,-4) --   (2,2) ; 
                \draw[-, thick]   (7,-4)  --  (11,-2)  ; 
                
                \draw[-, dashed] (0,6)  --  (5,0)  ; 
                \draw[-, dashed] (1,-2)  --  (5,0)  ; 
                \draw[-, dashed] (11,-2)  --  (5,0)  ; 
                \draw[-, thick]  (1,10)  -- (5,12) --  (11,10) -- (7,8) -- (1,10)  ; 
                \draw[-, thick]   (-4,4)  --  (1,10)   ; 
                \draw[-, thick]   (2,2)   --  (7,8)   ; 
                \draw[-, thick]   (7,8)   --    (12,2) --  (7,-4)  ; 
                \draw[-, thick]    (11,10)  --   (16,4)   --   (11,-2)  ; 
                \draw[-, thick]      (12,2)--    (16,4)   ; 
                 
                \draw[-, dashed]   (0,6) --   (5,12)   ; 
                \draw[-, dashed]    (5,12) --    (10,6)  -- (16,4); 
                \draw[-, dashed]   (10,6)   --    (5,0)    ; 

                \fill[] (11,-2) circle (8pt) node[right] {$abc^2$}; 

                \fill[] (7,8) circle (8pt) node[above] {$b^2d^2$}; 
                 
                \fill[] (5,12) circle (8pt) node[above] {$acd^2$}; 
                \fill[] (10,6) circle (8pt) node[right ] { }; 
                \fill[] (10,6.5) circle (0 pt) node[right ] {$ac^2d$}; 
                \fill[] (1,10) circle (8pt) node[left] {$abd^2$}; 
                 
                \fill[] (12,2) circle (8pt) node[left] { }; 
                \fill[] (11.7,2) circle (0pt) node[left] {$b^2cd$}; 
                \fill[] (16,4) circle (8pt) node[right] {$bc^2d$}; 
                \fill[] (11,10) circle (8pt) node[right] {$bcd^2$}; 
                \fill[] (2,2) circle (8pt) node[left] { }; 
                \fill[] (2,1.6) circle (0pt) node[left] {$ab^2d$}; 
                \fill[] (-4,4) circle (8pt) node[left] {$a^2bd$}; 

                \fill[] (0,6) circle (8pt) node[right] { }; 
                \fill[] (0.3,6.4) circle (0pt) node[right] {$a^2cd$ }; 
                \fill[] (5,0) circle (8pt) node[right] {}; 
                \fill[] (5.3,0.3) circle (0pt) node[right] {$a^2c^2$}; 
                 
                \fill[] (1,-2) circle (8pt) node[below] {$a^2bc$}; 
                 
                \fill[] (7,-4) circle (8pt) node[below] {$ab^2c$}; 
                
			\end{tikzpicture}

    \end{multicols}
    \caption{The polytope $\widehat{Y}$, in three and four variables.}
    \end{figure}
     \end{example}

    \begin{proposition}
        $\widehat{Y}$ supports the minimal resolution of $\widehat{J}$.
    \end{proposition}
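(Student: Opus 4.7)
The plan is to apply Theorem \ref{2.8}: I need to verify $\widehat{Y}_{\leq m'}$ is acyclic for every monomial $m'$. The complex $\widehat{Y}$ itself is pleasingly simple, consisting of six vertices with the labels in Notation \ref{n:Yhat3}, six edges (whose labels are lcms of cyclically adjacent vertex labels), and the hexagonal $2$-face with label $a^{2}b^{2}c^{2}$ (Remark \ref{r:Y3facts}).

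The geometric framework is that $\widehat{Y}$ embeds as a convex hexagon in the plane $x+y+z=3$ of $\R^{3}$, with its six vertices at the exponent-vector positions $(2,1,0),(1,2,0),(0,2,1),(0,1,2),(1,0,2),(2,0,1)$. For $m'=a^{\alpha}b^{\beta}c^{\gamma}$, the divisibility $\ell(v)\mid m'$ becomes the componentwise condition $(x_{v},y_{v},z_{v})\leq (\alpha,\beta,\gamma)$. Each of the three coordinate constraints is unimodal around the hexagon (for instance $x$ traces the values $2,1,0,0,1,2$), so each selects a cyclically consecutive arc of vertices of size $0$, $2$, $4$, or $6$, centered at the edge where that coordinate attains its minimum. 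The main technical step is a \emph{cyclic-arc lemma}: the intersection of these three arcs (i.e., the vertex set of $\widehat{Y}_{\leq m'}$) is itself cyclically consecutive or empty. Since each arc has only four possible sizes and the three centers are fixed and evenly spaced mid-edges, this amounts to a short enumeration over the $4^{3}=64$ combinations of arc sizes.

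With the cyclic-arc lemma in hand, the proof finishes quickly. If no vertex satisfies the divisibility, $\widehat{Y}_{\leq m'}$ is empty. If $1\leq k\leq 5$ vertices satisfy it, then the $k-1$ edges between cyclically consecutive in-vertices are included (the lcm of two divisors of $m'$ still divides $m'$), while the $2$-face is excluded: its label $a^{2}b^{2}c^{2}$ divides $m'$ only when $\alpha,\beta,\gamma\geq 2$, which by inspecting the six vertex positions forces all six of them into the subcomplex. So $\widehat{Y}_{\leq m'}$ is a path, contractible. If all six vertices are in, then $\alpha,\beta,\gamma\geq 2$, the $2$-face is included, and $\widehat{Y}_{\leq m'}$ is the entire hexagon, also contractible. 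Minimality is a routine check: at each incidence vertex $\subset$ edge $\subset$ hexagon the ratio of labels is a non-unit monomial. The main obstacle is pinning down the cyclic-arc lemma; once that is established, acyclicity is immediate.
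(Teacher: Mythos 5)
Your proof is correct, but it takes a genuinely different route from the paper's. The paper proves the proposition by comparing $\widehat{Y}_{\leq \alpha}$ with $Y_{\leq \alpha}$: it splits into the cases where $m\nmid\alpha$ (the complexes coincide), $m^{2}\mid\alpha$ (they are homeomorphic), $\alpha=m$ (empty), and the intermediate multidegrees, which are handled by Lemma \ref{l:3varbadmultidegrees} via an explicit deformation retract of the unique maximal face of $Y_{\leq\alpha}$ containing $m$; this leans on the already-established fact that $Y$ resolves $J$ (Proposition \ref{p:Y3resolvesJ3}). You instead verify acyclicity directly from the explicit face structure of the hexagon: your cyclic-arc lemma does hold (the only arcs that occur have sizes $2$, $4$, or $6$ centered at three fixed alternating mid-edges, and one checks that no pair of them can intersect disconnectedly since that would require two arcs of total size exceeding $6$, which here forces the complementary arcs to be adjacent), so every restriction $\widehat{Y}_{\leq m'}$ is empty, a path, or the full hexagon. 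Your argument is more elementary and self-contained, and it also yields minimality transparently; what it gives up is scalability --- the paper's homotopy-comparison structure is deliberately chosen because it is exactly the argument that generalizes to $n$ variables in Lemma \ref{4.2} via the Nagel--Reiner vertex-deletion lemma (Lemma \ref{2.25}), whereas an explicit enumeration of the faces of $\widehat{Y}$ and their restrictions becomes unwieldy for $n>3$. (Two cosmetic points: arcs of size $0$ never actually occur, since every coordinate attains the value $0$ on two vertices; and your $4^{3}$ enumeration overcounts the cases, though harmlessly.)
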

        \begin{proof}
            This is a special case of Lemma \ref{4.2}.
        \end{proof}

    \begin{construction}
        Let $\displaystyle \widehat{X}=\widehat{Y}\cup \bigcup_{i=1}^{n}\widehat{X_{i}}$ be the polytopal complex obtained from $X$ by replacing $Y$ with $\widehat{Y}$.
    \end{construction}

    \begin{theorem}
        $\widehat{X}$ supports the minimal resolution of $\widehat{I}$.
    \end{theorem}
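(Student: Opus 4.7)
The plan is to apply Theorem~\ref{2.8}: it suffices to verify that $\widehat{X}_{\leq \alpha}$ is acyclic for every multidegree $\alpha$, and I would do so by comparison with the known acyclic complex $X_{\leq \alpha}$ (where $X=X_1\cup X_2\cup X_3$ resolves $I$ by Corollary~\ref{c:symmetricresolution}). To set up the comparison, let $B_\alpha$ denote the subcomplex of faces with label dividing $\alpha$ that do not contain the vertex $m$. Because every face of $\widehat{Y}$ other than the hexagon itself is inherited from $Y$, one obtains parallel decompositions
\[
X_{\leq \alpha} = B_\alpha \cup Y_{\leq \alpha}, \qquad \widehat{X}_{\leq \alpha} = B_\alpha \cup \widehat{Y}_{\leq \alpha},
\]
with common intersection $L_\alpha := B_\alpha \cap Y_{\leq \alpha} = B_\alpha \cap \widehat{Y}_{\leq \alpha}$, the link of $m$ inside $X_{\leq \alpha}$. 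The analysis then splits on the divisibility of $\alpha$ by $m$ and by $m^{2}$.

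When $m \nmid \alpha$, no face of $Y$ containing $m$ lies in $Y_{\leq \alpha}$ and the hexagon (with label $m^{2}$) is excluded from $\widehat{Y}_{\leq \alpha}$, so $Y_{\leq \alpha} = \widehat{Y}_{\leq \alpha}$ and hence $\widehat{X}_{\leq \alpha} = X_{\leq \alpha}$ is acyclic. When $m^{2} \mid \alpha$, every face of $Y$ and every face of $\widehat{Y}$ has label dividing $\alpha$, so $Y_{\leq \alpha} = Y$ and $\widehat{Y}_{\leq \alpha} = \widehat{Y}$ are both contractible $2$-disks with the same boundary $L_\alpha$. Since $\widehat{X}_{\leq \alpha}$ and $X_{\leq \alpha}$ are both obtained from $B_\alpha$ by gluing a contractible disk along $L_\alpha$, a boundary-preserving homeomorphism $Y \to \widehat{Y}$ induces a homotopy equivalence between them, and $\widehat{X}_{\leq \alpha}$ inherits the acyclicity of $X_{\leq \alpha}$.

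The main obstacle is the intermediate case $m \mid \alpha$ but $m^{2} \nmid \alpha$, where the hexagon is absent from $\widehat{Y}_{\leq \alpha}$ yet some star-cells of $m$ still appear in $Y_{\leq \alpha}$. Here Lemma~\ref{l:3varbadmultidegrees} supplies a deformation retract $\phi_{i,j,k}$ on a single facet $Y_i$ that carries $Y_{\leq \alpha}$ to $\widehat{Y}_{\leq \alpha}$ and restricts to the identity on $L_\alpha \cap Y_i$. The delicate step is to extend $\phi_{i,j,k}$ continuously across all of $X_{\leq \alpha}$; this succeeds because the other facets $Y_j$ meet $Y_i$ only along the link of $m$ (where $\phi$ is the identity), and $B_\alpha$ meets $Y_i$ only through this same link. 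Extension by the identity on $B_\alpha$ and the other $Y_j$ is therefore well-defined and yields a homotopy from $X_{\leq \alpha}$ to $\widehat{X}_{\leq \alpha}$, establishing acyclicity of the latter. Finally, minimality follows because every label in $\widehat{X}$ is inherited from the minimal complex $X$ or equals the hexagon's label $m^{2}$, which strictly exceeds each of its six boundary edge labels; hence no differential can have a unit entry.
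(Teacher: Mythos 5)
Your argument takes a genuinely different route from the paper's proof of this theorem: after the same reduction to the multidegrees $\alpha$ with $m\mid\alpha$ and $m^{2}\nmid\alpha$, the paper simply enumerates the seven remaining $\alpha$ (up to cyclic symmetry) and exhibits each $\widehat{X}_{\leq\alpha}$ explicitly, checking contractibility by inspection. Your uniform homotopy-extension argument is instead essentially the strategy the paper uses for the general-$n$ statement (Theorem \ref{t:thetheorem}, via Lemma \ref{2.25}), and it has the advantage of not resting on a finite picture check. The first two cases and the minimality argument (the hexagon's label $m^{2}$ has degree $6$ while its boundary edges have degree $4$) are fine.

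However, the justification of your crucial extension step is wrong as stated. You claim that ``the other facets $Y_{j}$ meet $Y_{i}$ only along the link of $m$.'' This is false: $Y_{1}\cap Y_{2}$ is the edge joining $ab^{2}$ to $abc$, which contains $m$; and for $\alpha=ab^{2}c^{2}$ that edge lies in $X_{\leq\alpha}$ and is moved by $\phi_{2,1,2}$ (it is pushed onto the edge from $ab^{2}$ to $b^{2}c$), so it is emphatically not in the fixed locus. The extension by the identity is nevertheless well defined, for a different reason which you need to supply: (i) every face of $X$ containing $m$ is a face of $Y$ (the star of $m$), and Lemma \ref{l:3varbadmultidegrees} says the faces of $Y_{\leq\alpha}$ containing $m$ all lie in the single facet $Y_{i}$ on which $\phi_{i,j,k}$ is defined --- so there is nothing outside $Y_{i}$ that needs to move; and (ii) any face of $X_{\leq\alpha}$ not contained in $Y_{i}$ therefore avoids $m$, so it meets $Y_{i}$ in a face of $\widehat{Y}_{\leq\alpha}$, which $\phi_{i,j,k}$ fixes pointwise because it is a deformation retract onto $\widehat{Y}_{\leq\alpha}$. (In the example above, the offending edge is itself a face of $Y_{2}$, so no conflict arises.) One further small gap: Lemma \ref{l:3varbadmultidegrees} excludes $\alpha=m$, so you must dispose of that case separately by noting that $\widehat{X}_{\leq m}$ is the empty complex.
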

    \begin{proof}
        This is a special case of Theorem \ref{t:thetheorem}.
    \end{proof}

\begin{example} 
    The complex in Figure \ref{f:3pinched} supports the minimal resolution of $\widehat{I}=(a,b,c)^{3} \setminus \{abc\}$ in $k[a,b,c]$.

    \begin{figure}[hbt]
    \centering

    \begin{tikzpicture}[scale=0.8][line join = round, line cap = round]
				
				\fill[rose]  (-3,1.5) -- (-1,1.5) -- (-2,3) -- (-3,1.5);
				\fill[sand]   (1,1.5) -- (3,1.5) -- (2,3) -- (1,1.5);
				\fill[cyan]  (-1,4.5) -- (1,4.5) -- (0,6)  --  (-1,4.5);
				\fill[light-gray] (-2,3) -- (-1,4.5) -- (1,4.5) -- (2,3) -- (1,1.5) -- (-1,1.5) -- (-2,3);
				
				\draw[-, thick]  (-3,1.5) -- (3,1.5) -- (0,6) -- (-3,1.5);
				\draw[-, thick]  (-2,3) -- (-1,1.5); 
				\draw[-, thick]  (-1,4.5) -- (1,4.5);
				\draw[-, thick]   (2,3) -- (1,1.5); 
				
				\fill[] (-3,1.5) circle (3pt) node[below] {$a^3$};
				\fill[] (-1,1.5) circle (3pt) node[left,below] {$a^2b$};
				
				\fill[] (1,1.5) circle (3pt) node[below] {$ab^2$};
				\fill[] (3,1.5) circle (3pt) node[below] {$b^3$};
				
				\fill[] (-2,3) circle (3pt) node[left] {$a^2c$}; 
				\fill[] (2,3) circle (3pt) node[right] {$b^2c$};
				
				\fill[] (-1,4.5) circle (3pt) node[left] {$ac^2$};
				\fill[] (1,4.5) circle (3pt) node[right] {$bc^2$};
				
				\fill[] (0,6) circle (3pt) node[above] {$c^3$};

			\end{tikzpicture}
            
    \caption{The minimal resolution of $\widehat{I}=(a,b,c)^{3}\smallsetminus \{abc\}$.}    
    \label{f:3pinched}
    \end{figure}

 The complex $\widehat{X}$, obtained by gluing the five complexes in Figure \ref{f:4pinched}, supports a minimal resolution of   $\widehat{I}=(a,b,c,d)^4 \setminus \{abcd\}$ in $k[a,b,c,d]$.

 \begin{tiny}
       \begin{figure}[hbt] 
       
        \begin{center}
            \begin{tikzpicture}[scale=0.15][line join = round, line cap = round] 
		      
			    \draw[-, thick]   (5,24)   --    (-10,6)  ; 
                \draw[-, thick]    (-10,6)   --  (2,2)  ;  
                \draw[-, thick]    (7,8) --   (5,24)    ; 
				\draw[-, thick]   (6,16)  --  (0,18) ; 
			    \draw[-, thick]   (7,8) --     (-5,12) ; 
                \draw[-, thick]  (6,16)  --  (-4,4)   ; 
                \draw[-, thick]  (6,16)  --   (10,18)  ; 
                \draw[-, thick,dashed]  (10,18)    --   (0,18) ; 
                \draw[-, thick,dashed]    (1,10)  --  (5,12) ; 
                \draw[-, thick,dashed]    (5,12) --   (-5,12)  ; 
                \draw[-, thick,dashed]   (10,18)  --  (0,6)  ; 
                \draw[-, thick]      (5,24) -- (10,18) ; 
                \draw[-, thick,dashed]   (-10,6)   -- (0,6)   ; 

                \draw[-, thick,dashed]  (-4,4)   --     (0,6)  ; 

                \draw[-, thick] (7,8)    (7,8) --   (11,10)   ; 
                \draw[-, thick] (2,2)  (11,10) --  (10,18)  ; 
                \draw[-, thick,dashed]  (5,12) --  (11,10) ; 
                \draw[-, thick]   (7,8) --  (2,2) ; 

                \fill[] (-10,6) circle (8pt) node[below] {$a^3d$}; 
                \fill[] (-4,4) circle (8pt) node[below] {$a^2bd$}; 
                \fill[] (2,2) circle (8pt) node[below] {$ab^2d$}; 
                \fill[] (0,6) circle (8pt) node[right] {$a^2cd$};
                 
                \fill[] (5,12) circle (8pt) node[below] {$acd^2$}; 
                \fill[] (0,18) circle (8pt) node[left] {$ad^3$}; 
                \fill[] (-5,12) circle (8pt) node[left] {$a^2d^2$}; 
                \fill[] (1,10) circle (8pt) node[left] {$abd^2$}; 

                \fill[] (7,8) circle (8pt) node[right] {$b^2d^2$}; 

                \fill[] (5,24) circle (8pt) node[above] {$d^4$}; 

                \fill[] (6,16) circle (8pt) node[right] {$bd^3$}; 

                \fill[] (10,18) circle (8pt) node[right] {$cd^3$}; 

                \fill[] (11,10) circle (8pt) node[right] {$bcd^2$}; 
                 
			\end{tikzpicture}
        \end{center}

        \begin{multicols}{3}
           \begin{tikzpicture}[scale=0.15][line join = round, line cap = round] 
		      
			    \draw[-, thick]  (-15,0)  --  (3,-6)   ; 
                \draw[-, thick]   (3,-6)  -- (11,-2) ;  
                \draw[-, thick,dashed]    (5,0)   --  (-15,0); 
				\draw[-, thick,dashed]  (-5,0)  --  (-9,-2) ; 
			    \draw[-, thick,dashed]   (5,0) --  (-3,-4) ; 
                \draw[-, thick,dashed]  (-5,0)  --  (7,-4) ; 
                \draw[-, thick,dashed] (-5,0)   -- (-10,6)    ; 
                \draw[-, thick]   (-10,6)  --   (-9,-2); 
                \draw[-, thick,dashed]  (1,-2)  --  (-4,4)  ; 
                \draw[-, thick]     (-4,4) --  (-3,-4) ; 
                \draw[-, thick]  (-10,6)  --  (2,2) ; 
                \draw[-, thick]  (-15,0)  -- (-10,6) ; 

                \draw[-, thick] (7,-4) -- (2,2)    ; 
                \draw[-, thick]  (2,2)  -- (3,-6)  ; 

                \draw[-, thick] (5,0)  -- (0,6)    ; 
                \draw[-, thick]  (0,6) -- (-10,6)  ; 
                \draw[-, thick]  (-4,4) --   (0,6)  ; 
                 \draw[-, thick]  (5,0) --   (11,-2)  ; 

				\fill[] (-15,0) circle (8pt) node[below] {$a^4$}; 
                \fill[] (-9,-2) circle (8pt) node[below] {$a^3b$}; 
                \fill[] (-3,-4) circle (8pt) node[below] {$a^2b^2$}; 
                \fill[] (3,-6) circle (8pt) node[below] {$ab^3$}; 
                \fill[] (7,-4) circle (8pt) node[below] {$ab^2c$}; 
                \fill[] (1,-2) circle (8pt) node[below] {$a^2bc$}; 
                \fill[] (-5,0) circle (8pt) node[below] {$a^3c$}; 
                \fill[] (-10,6) circle (8pt) node[above] {$a^3d$}; 
                \fill[] (-4,4) circle (8pt) node[right] {$a^2bd$}; 
                \fill[] (2,2) circle (8pt) node[left] {$ab^2d$}; 
                \fill[] (0,6) circle (8pt) node[above] {$a^2cd$};
                 
                \fill[] (11,-2) circle (8pt) node[below] {$abc^2$}; 
                \fill[] (5,0) circle (8pt) node[right] {$a^2c^2$}; 

			\end{tikzpicture}

   \columnbreak 

\begin{tikzpicture}[scale=0.15][line join = round, line cap = round]

                \draw[-, dashed] (0,6)   -- (-4,4)  ; 
                \draw[-, thick]   (-4,4) -- (2,2)  ; 
                 
                \draw[-, thick]   (-4,4) --   (1,-2) ; 
                \draw[-, thick]   (1,-2) -- (7,-4)   ; 
                \draw[-, thick]   (7,-4) --   (2,2) ; 
                \draw[-, thick]   (7,-4)  --  (11,-2)  ; 
                 
                \draw[-, dashed] (0,6)  --  (5,0)  ; 
                \draw[-, dashed] (1,-2)  --  (5,0)  ; 
                \draw[-, dashed] (11,-2)  --  (5,0)  ; 
                \draw[-, thick]  (1,10)  -- (5,12) --  (11,10) -- (7,8) -- (1,10)  ; 
                \draw[-, thick]   (-4,4)  --  (1,10)   ; 
                \draw[-, thick]   (2,2)   --  (7,8)   ; 
                \draw[-, thick]   (7,8)   --    (12,2) --  (7,-4)  ; 
                \draw[-, thick]    (11,10)  --   (16,4)   --   (11,-2)  ; 
                \draw[-, thick]      (12,2)--    (16,4)   ; 
                 
                \draw[-, dashed]   (0,6) --   (5,12)   ; 
                \draw[-, dashed]    (5,12) --    (10,6)  -- (16,4); 
                \draw[-, dashed]   (10,6)   --    (5,0)    ; 

                \fill[] (11,-2) circle (8pt) node[right] {$abc^2$}; 

                \fill[] (7,8) circle (8pt) node[above] {$b^2d^2$}; 
                 
                \fill[] (5,12) circle (8pt) node[above] {$acd^2$}; 
                \fill[] (10,6) circle (8pt) node[right ] { }; 
                \fill[] (10,6.5) circle (0 pt) node[right ] {$ac^2d$}; 
                \fill[] (1,10) circle (8pt) node[left] {$abd^2$}; 
                 
                \fill[] (12,2) circle (8pt) node[left] { }; 
                \fill[] (11.7,2) circle (0pt) node[left] {$b^2cd$}; 
                \fill[] (16,4) circle (8pt) node[right] {$bc^2d$}; 
                \fill[] (11,10) circle (8pt) node[right] {$bcd^2$}; 
                \fill[] (2,2) circle (8pt) node[left] { }; 
                \fill[] (2,1.6) circle (0pt) node[left] {$ab^2d$}; 
                \fill[] (-4,4) circle (8pt) node[left] {$a^2bd$}; 

                \fill[] (0,6) circle (8pt) node[right] { }; 
                \fill[] (0.3,6.4) circle (0pt) node[right] {$a^2cd$ }; 
                \fill[] (5,0) circle (8pt) node[right] {}; 
                \fill[] (5.3,0.3) circle (0pt) node[right] {$a^2c^2$}; 
                 
                \fill[] (1,-2) circle (8pt) node[below] {$a^2bc$}; 
                 
                \fill[] (7,-4) circle (8pt) node[below] {$ab^2c$}; 
                 
			\end{tikzpicture}

   \columnbreak

   \begin{tikzpicture}[scale=0.15][line join = round, line cap = round] 
		      
			    \draw[-, thick]   (25,0)   --   (10,18)   ; 
                \draw[-, thick]    (10,18)   --   (0,6) ;  
                \draw[-, thick,dashed]   (5,0)  --   (25,0)  ; 
				\draw[-, thick,dashed]   (15,0)  --  (20,6)  ; 
			    \draw[-, thick]  (5,0)  --  (10,6)   ; 
                \draw[-, thick,dashed]  (10,6)  --  (15,12)   ; 
                \draw[-, thick,dashed]   (15,0)  --    (10,6)  ; 
                \draw[-, thick]   (10,6)  --   (5,12)  ; 
                \draw[-, thick,dashed]  (15,0)  --    (21,-2) ; 
                \draw[-, thick]   (21,-2)   --  (20,6)  ; 
                \draw[-, thick]   (10,6)   --  (16,4) ; 
                \draw[-, thick]  (16,4)   --   (15,12)  ; 
                \draw[-, thick]    (21,-2) --  (11,10)  ; 
                \draw[-, thick]   (25,0)   -- (21,-2) ; 

                \draw[-, thick]     (5,12) --   (11,10)    ; 
                \draw[-, thick]    (11,10)  --   (10,18)  ; 

                \draw[-, thick] (7,8)  (5,0) --   (11,-2)    ; 
                \draw[-, thick] (11,-2)    --  (21,-2)  ; 
                \draw[-, thick]  (16,4) --  (11,-2) ; 
                \draw[-, thick]   (5,0) --   (0,6) ; 

                \fill[] (0,6) circle (8pt) node[left] {$a^2cd$};
                 
                \fill[] (11,-2) circle (8pt) node[below] {$abc^2$}; 
                \fill[] (5,0) circle (8pt) node[below] {$a^2c^2$}; 

                \fill[] (15,0) circle (8pt) node[below] {$ac^3$}; 
                \fill[] (10,6) circle (8pt) node[left] {$ac^2d$}; 
                \fill[] (5,12) circle (8pt) node[left] {$acd^2$}; 
                 
                \fill[] (15,12) circle (8pt) node[right] {$c^2d^2$}; 

                \fill[] (25,0) circle (8pt) node[below] {$c^4$}; 
                
                \fill[] (21,-2) circle (8pt) node[below] {$bc^3$}; 

                \fill[] (10,18) circle (8pt) node[above] {$cd^3$}; 
                \fill[] (20,6) circle (8pt) node[right] {$c^3d$}; 

                \fill[] (11,10) circle (8pt) node[below] {$bcd^2$}; 

                \fill[] (16,4) circle (8pt) node[left] {$bc^2d$}; 
			\end{tikzpicture}
        \end{multicols}
        \begin{tikzpicture}[scale=0.15][line join = round, line cap = round] 
		      
			    \draw[-, thick]   (9,-8)  --   (21,-2)   ; 
                \draw[-, thick]   (21,-2)   --  (11,10) ;  
                \draw[-, thick]    (7,8)   --   (9,-8) ; 
				\draw[-, thick]   (8,0)  --   (13,-6); 
			    \draw[-, thick]  (7,8) --  (17,-4)  ; 
                \draw[-, thick]  (8,0)  --   (16,4) ; 
                \draw[-, thick]   (8,0)  --   (3,-6)   ; 
                \draw[-, thick,dashed]    (3,-6) --    (13,-6); 
                \draw[-, thick,dashed]   (12,2)  --   (7,-4) ; 
                \draw[-, thick,dashed]      (7,-4) --   (17,-4) ; 
                \draw[-, thick,dashed]   (3,-6)  --  (11,-2) ; 
                \draw[-, thick]   (9,-8)  --  (3,-6); 

                \draw[-, thick,dashed]   (16,4) --   (11,-2)   ; 
                \draw[-, thick,dashed]   (11,-2)  --  (21,-2) ; 

                \draw[-, thick] (7,8)   --  (2,2)   ; 
                \draw[-, thick] (2,2)   -- (3,-6)  ; 
                \draw[-, thick,dashed]  (7,-4) --  (2,2) ; 
                \draw[-, thick]  (7,8) --   (11,10) ; 

                \fill[] (3,-6) circle (8pt) node[below] {$ab^3$}; 
                \fill[] (7,-4) circle (8pt) node[below] {$ab^2c$}; 
                 
                \fill[] (2,2) circle (8pt) node[left] {$ab^2d$}; 
               
                \fill[] (11,-2) circle (8pt) node[below] {$abc^2$}; 
                 
                \fill[] (9,-8) circle (8pt) node[below] {$b^4$}; 
                \fill[] (7,8) circle (8pt) node[above] {$b^2d^2$}; 

                \fill[] (17,-4) circle (8pt) node[below] {$b^2c^2$}; 

                \fill[] (8,0) circle (8pt) node[left] {$b^3d$}; 
                \fill[] (13,-6) circle (8pt) node[below] {$b^3c$}; 

                \fill[] (21,-2) circle (8pt) node[right] {$bc^3$}; 

                \fill[] (11,10) circle (8pt) node[above] {$bcd^2$}; 
                \fill[] (12,2) circle (8pt) node[right] {$b^2cd$}; 

                \fill[] (16,4) circle (8pt) node[right] {$bc^2d$}; 
			\end{tikzpicture}
       \caption{Gluing the five complexes gives us a symmetric minimal resolution of $(a,b,c,d)^4 \setminus \{abcd\}$ in $k[a,b,c,d]$.}\label{f:4pinched}
   \end{figure}
   \end{tiny}

 \end{example}


	\section{A cyclic resolution of $(x_1,\cdots,x_n)^d$} \label{sec5}
	
 Fix $m= x_1^{d_1} x_2^{d_2} \cdots x_n^{d_n}$ and put $d=d_{1}+\dots +d_{n}$.  We will build a polytopal resolution of $I = \mathfrak{m}^d = (x_1,\cdots,x_n)^d$ which is symmetric around $m$. 

    We shall see that the general shape of the resolution depends substantially on which variables divide $m$.  Intuitively, the complex of boxes arising from the ordering $Q_{i}:x_{i}<x_{i+1}<\dots<x_{i-1}$ will appear if and only if $x_{i-1}$ divides $m$.

        \begin{definition}
            Fix $m= x_1^{d_1} x_2^{d_2} \cdots x_n^{d_n}$. Denote ${\rm{supp}}(m) \colon  = \{x_i \colon d_i \not=0\}$ the support of $m$.  For convenience, we will abuse notation by setting $\displaystyle {\rm{supp}}(m) \colon = \{i \colon x_i \in {\rm{supp}}(m)\}$.
        \end{definition}

    \begin{notation}
        For each $i$, denote by $I_{i}$ the principal $Q_{i}$-Borel ideal generated by $m$, $I_{i}=\Borel_{Q_{i}}(m)$.  Let $X_{i}$ be the complex of boxes resolving $I_{i}$.
    \end{notation}

	\begin{lemma} 		\phantomsection \label{l:factors} 
                  For any $1\leq i \leq n$, $I_{i}$ is the cyclic product of primes

    \[
    I_{i}=\prod_{j=i}^{i-1} \mathbf{m}_{[i,j]}^{d_{j}}.
    \]          

        \end{lemma}

        \begin{proof}
          This is immediate from \cite[Proposition 2.7]{Sinefakopoulos2007}, which states that $\Borel(\prod x_{i}^{d_{i}})=\prod (\Borel(x_{i}))^{d_{i}}$.   
        \end{proof}

        \begin{example}
    Let $m=x_2^2 x_4^3  = x_1^0x_2^2x_3^0x_4^3$ and $I=(x_1,\cdots ,x_4)^5$ in $k[x_1,x_2,x_3,x_4]$. Then  
        \begin{align*}
        & I_1=  (x_1)^0(x_1,x_2)^2(x_1,x_2,x_3)^0(x_1,x_2,x_3,x_4)^3,\\
        & I_2 = (x_2)^2(x_2,x_3)^0(x_2,x_3,x_4)^3(x_2,x_3,x_4,x_1)^0,\\
        & I_3 = (x_3)^0(x_3,x_4)^3(x_3,x_4, x_1)^0(x_3,x_4,x_1, x_2)^2,\\
        & I_4 = (x_4)^3(x_4,x_1)^0(x_4,x_1, x_2)^2(x_4,x_1,x_2,x_3)^0.\\
    \end{align*} 
\end{example} 
                        
        \begin{lemma}\label{l:truedecomposition}
          We have  $I=I_1 + \cdots + I_{n}$.
        \end{lemma}

\begin{proof}
    It suffices to show that, for an arbitrary monomial $f$ of degree $d$, we have $f\in I_{i}$ for some $i$.
          
          Write $f=\prod x_{j}^{e_{j}}$, and set $\displaystyle \delta_{[r,s]}(f)=\sum_{j\in [r,s]}(e_{j}-d_{j})$.  Then $f\in I_{i}$ if and only if $\delta_{[i,s]}(f)\geq 0$ for all $s$ (including $s=i$, which is the statement that $x_{i}^{d_{i}}$ divides $f$).  
        
        Choose $t$ such that $\delta_{[1,t]}(f)$ is minimal.  If $\delta_{[1,t]}(f)\geq 0$, then $f\in I_{1}$.  If not, we claim that $f\in I_{t+1}$.  Indeed, setting $r=t+1$, we have
        \[
        \delta_{[r,s]}(f)=\left\{\begin{matrix}\delta_{[1,s]}(f)-\delta_{[1,t]}(f) & \text{if} & r\leq s\leq n\\ \delta_{[r,n]}(f)+\delta_{[1,s]}(f) & \text{if} & 1\leq s<r\end{matrix}\right.
        \]
        Observe that $\delta_{[1,t]}(f)+\delta_{[r,n]}(f)=\delta_{[1,n]}(f)=\deg(f)-d=0$.  Thus $\delta_{[r,n]}(f)=-\delta_{[1,t]}(f)$ and in either case we have $\delta_{[r,s]}(f)=\delta_{(1,s)}(f)-\delta_{[1,t]}(f)$, which is nonnegative by the choice of $t$.
\end{proof}

The following notation is convenient to describe the intersection of any collection of $I_i$.

\begin{notation}
    Let  $m=x_1^{d_1}\cdots x_n^{d_n}$ and $i_1 < \cdots< i_\ell$ in $\{1,\cdots,n\}$.
		For   each index $j \in \{i_1,\cdots,i_\ell\}$, denote by $\mathbf{p}_{j}$ the product of cyclic primes
  \begin{gather*}
      \mathbf{p}_{j}:= \mathbf{m}_{[i_{j},i_{j}]}^{d_{i_j}}\mathbf{m}_{[i_{j},i_{j}+1]}^{d_{i_{j}+1}}  \dots \mathbf{m}_{[i_{j},i_{j+1}-1]}^{d_{i_{j+1}-1}}.
  \end{gather*} 

Observe that  $\mathbf{p}_{j}=\Borel_{Q_{i_j}}(x_{i_j}^{d_{i_j}} \cdots {x}_{i_{j+1}-1}^{d_{i_{j+1}-1}})$ in $k[x_{i_j}, \cdots, {x}_{i_{j+1}-1}]$. Denote by $\Gamma_{[i_{j},i_{j+1}-1]}$ the box complex resolving  $\mathbf{p}_{j}$.
\end{notation}

\begin{example}
    Let $m=x_2^2x_3x_5^3x_8 = x_1^0x_2^2x_3x_4^0x_5^3x_6^0x_7^0x_8x_9^0$ and $I=(x_1,\cdots ,x_9)^7$ in $k[x_1,\cdots,x_9]$ and $(i_{1},i_{2},i_{3})=(2,4,8)$. Then  
    \begin{gather*}
        \mathbf{p}_1 = (x_2)^2(x_2,x_3),\\
        \mathbf{p}_2 = (x_4)^0(x_4,x_5)^3(x_4,x_5,x_6)^0(x_4,x_5,x_6,x_7)^0,\\
        \mathbf{p}_3 = (x_8)(x_8,x_9)^0(x_8,x_9,x_1)^0.
    \end{gather*} 

    Observe that  
    \begin{gather*}
        I_2 \cap I_4 \cap I_8 = \mathbf{p}_1 \mathbf{p}_2 \mathbf{p}_3, \\
         X_2 \cap X_4 \cap X_8 =\Gamma_{[2,3]}\times \Gamma_{[4,7]}\times \Gamma_{[8,1]}.
    \end{gather*} 

    We will prove this property generally in the following theorem.
\end{example}

	\begin{theorem}
		\phantomsection\label{3.4} 
		Let  $m=x_1^{d_1}\cdots x_n^{d_n}$ and $\{i_1,\dots,i_\ell\} \subset \{1,\dots,n\}$. Then:
                \begin{enumerate}[(1)]
                 \item We have
                \begin{gather*}
                    I_{i_1} \cap \cdots \cap I_{i_{\ell}} = \mathbf{p}_{{1}} \mathbf{p}_{{2}}\dots \mathbf{p}_{{\ell}}.
                \end{gather*}
                  
                \item The intersection of the $I_{j}$
                is minimally resolved by the  product of the box complexes resolving the $\mathbf{p}_{j}$.  That is,
                  \[
                  X_{i_{1},\dots, i_{\ell}}:=\Gamma_{[i_{1},i_{2}-1]}\times \Gamma_{[i_{2},i_{3}-1]}\times \dots \times \Gamma_{[i_{\ell},i_{1}-1]} ,
                  \] 
            gives a minimal resolution of $ I_{i_1} \cap \cdots \cap I_{i_{\ell}}$. 
            
            \item    The intersection of the $I_{j}$
            is minimally resolved by the intersection of the box complexes resolving the $\Borel_{Q_{i_{j}}}(m)$, \[X_{i_{1},\dots, i_{\ell}}=  X_{i_1} \cap \cdots \cap X_{i_\ell}.\]

	        \end{enumerate}

        \end{theorem}
	
	\begin{proof}

          To show that $I_{i_{1}}\cap \dots \cap I_{i_{\ell}}\subseteq \prod \mathbf{p}_{j}$, observe that $I_{i_{j}}\subset \mathbf{p}_{j}$ for all $j$ by Lemma \ref{l:factors}.  
          Since the $\mathbf{p}_{j}$ have disjoint support, we have $\bigcap \mathbf{p}_{j}=\prod \mathbf{p}_{j}$.  Thus 
          \[
            \bigcap_{j=1}^{\ell} I_{i_{j}} \subset \bigcap_{j=1}^{\ell} \mathbf{p}_{j}= \prod_{j=1}^{\ell} \mathbf{p}_{j}.
          \]

          To show that $\prod \mathbf{p}_{j}\subseteq I_{i_{1}}\cap \dots \cap I_{i_{\ell+1}}$, it suffices to show that, without loss of generality, $\prod \mathbf{p}_{j}\subseteq I_{i_{1}}$.  Observe that
          
          \begin{align*}
          \mathbf{p}_{j}&=\mathbf{m}_{[i_{j},i_{j}]}^{d_{i_j}}\mathbf{m}_{[i_{j},i_{j}+1]}^{d_{i_{j}+1}}  \dots \mathbf{m}_{[i_{j},i_{j+1}-1]}^{d_{i_{j+1}-1}}\\
            &\subseteq \mathbf{m}_{[i_{1},i_{j}]}^{d_{i_j}} \mathbf{m}_{[i_{1},i_{j}+1]}^{d_{i_{j}+1}}\dots\mathbf{m}_{[i_{1},i_{j+1}-1]}^{d_{i_{j+1}-1}}.\\
          \end{align*}
          Thus,
          \[\prod \mathbf{p}_{j} \subseteq \biggl[\mathbf{m}_{[i_{1},i_{1}]}^{d_{i_{1}}} \dots \mathbf{m}_{[i_{1},i_{2}-1]}^{d_{i_{2}-1}}\biggr]\biggl[\mathbf{m}_{[i_{1},i_{2}]}^{d_{i_{2}}} \dots \mathbf{m}_{[i_{1},i_{3}-1]}^{d_{i_{3}-1}}\biggr]\dots \biggl[\mathbf{m}_{[i_{1},i_{\ell}]}^{d_{i_{\ell}}} \dots \mathbf{m}_{[i_{1},i_{1}-1]}^{d_{i_{1}-1}}\biggr] = I_{i_{1}}\] as desired. This proves (1).

For (2), observe that the supports of the $\mathbf{p}_{i_{j}}$ are pairwise disjoint and induct on Lemma \ref{l:Achilleasdisjoint}; we conclude that $X_{i_{1},\dots,i_{\ell}}$ yields a minimal resolution.

For (3), set  $\mathcal{I}=\displaystyle \bigcap_{j=1}^{\ell}X_{i_j}$ and $\mathcal{P}=\Gamma_{[i_{1},i_{2}-1]} \times \dots \times \Gamma_{[i_{\ell,i_{1}-1}]}$.  We will show $\mathcal{I}=\mathcal{P}$.  

To show $\mathcal{P}\subseteq \mathcal{I}$, let $\Lambda$ be a facet of $\mathcal{P}$.  We may write $\Lambda=\prod_{j}\Gamma_{i_{j}}(f_{j})$, with each $f_{j}\in \mathbf{p}_{j}$.  Observe that the poset  on $\{x_{i_{j}},\dots,x_{i_{j+1}-1}\}$ defined by the covering relations $x_{i_{j}}<x_{i_{j}+1}<\dots < x_{i_{j+1}-1}$ is a sub-poset of $Q_{i_{j}}$ for all $j$.  Setting $f=\prod_{j} f_{j}$, we immediately have, for all $t\in \{1,\dots, \ell\}$,  $f\in \Borel_{Q_{i_{t}}}(m)$ and thus $\Gamma_{i_{j}}(f_{j})\subseteq \Gamma_{i_{t}}(f_{j})$.  We conclude $\Lambda\subseteq \Gamma_{i_{t}}(f)\in \Gamma_{[i_{t},i_{t+1}-1]}$.  Since $t$ is arbitrary, we have $\Lambda\in \mathcal{I}$ as desired.

\newcommand{\Zeta}{\mathrm{Z}}

Now we show $\mathcal{I}\subseteq \mathcal{P}$. 
It follows from (2) that $I_{i_1} \cap \cdots \cap I_{i_{\ell}} = \mathbf{p}_{{1}} \mathbf{p}_{{2}}\dots \mathbf{p}_{{\ell}} $ is an equigenerated ideal and its minimal generators are of degree $d$, so $G \left(I_{i_1} \cap \cdots \cap I_{i_{\ell}}\right) = \displaystyle \bigcap \limits_{j=1}^{\ell} G \left(I_{i_{j}}\right)$.  This implies that vertices of $\displaystyle \bigcap_{j=1}^{\ell}X_{i_j}$ are labeled with monomials in  $\displaystyle \bigcap_{j=1}^{\ell}G \left(I_{i_{j}}\right)$. Now we show that for each vertex $f$ of $\displaystyle \bigcap_{j=1}^{\ell}X_{i_j}$, $\displaystyle \bigcap_{j=1}^{\ell}\Gamma_{Q_{i_j}}(f)$ is contained in 
$\prod_{j=1}^{\ell}\Gamma_{[i_{j},i_{j}-1]}$. Since $f\in \mathbf{p}_{{1}} \mathbf{p}_{{2}}\dots \mathbf{p}_{{\ell}}$, it can be written as 
\begin{gather*}
    f = f_1 \cdots f_l,
\end{gather*}
where for each $j$, $f_j \in \Borel_{Q_{i_j}}(x_{i_j}^{d_{i_j}} \cdots {x}_{i_{j+1}-1}^{d_{i_{j+1}-1}})$. For each $j$, let $Z_j\colon = \Gamma_{Q_{i_j}}(f_j)  \subset \Gamma_{[i_{j},i_{j}-1]}$. Then 
\begin{align*}
    \bigcap_{j=1}^{\ell}\Gamma_{Q_{i_j}}(f)  = Z_1 \times \cdots \times Z_{\ell} 
\end{align*}
which is contained in $\Gamma_{[i_{1},i_{2}-1]}\times \Gamma_{[i_{2},i_{3}-1]}\times \dots \times \Gamma_{[i_{\ell},i_{1}-1]} $. This proves (3). 
        \end{proof}

        We have now shown that the box complexes resolving the $I_{i}$ satisfy the hypotheses of Lemma \ref{l:unionmanyboxes}.  We conclude that their union resolves $I$ and  obtain the following:

 \begin{theorem}\label{c:symmetricresolution}
     For any generator   $m$ of $I=\mathbf{m}^{d}$, the complex 
     \begin{align*}
         \displaystyle X = \bigcup_{i=1}^n X_{i}  
     \end{align*}
      supports a minimal resolution of $I$.
 \end{theorem}

 We now simplify the union in Theorem \ref{c:symmetricresolution}. We need a lemma.

\begin{lemma}
\label{l:truesum} 
Let $m= x_1^{d_1} x_2^{d_2} \cdots x_n^{d_n}$. 
\begin{enumerate}[(i)]
    \item If $d_i = 0$, then $I_{i} \supseteq I_{{i+1}}.$

    \item $\displaystyle I=\sum_{ u\in {\rm{supp}}(m)} I_{u+1}$.
\end{enumerate}
          
        \end{lemma}

 \begin{proof}
     For (i), assume that $d_i=0$. On one hand, 
     \begin{gather*}
         \mathbf{p}_1 = \mathbf{m}_{[i,i]}^{d_i} = \mathbf{m}_{[i,i]}^{0} = (1), \\
         \mathbf{p}_2 = \mathbf{m}_{[i+1,i+1]}^{d_{i+1}} \mathbf{m}_{[i+1,i+2]}^{d_{i+2}} \cdots  \mathbf{m}_{[i+1,i-1]}^{d_{i-1}}, 
     \end{gather*}
   so  it follows from Theorem \ref{3.4} that 
     \begin{gather*}
          I_i \cap I_{i+1} = \mathbf{p}_1 \mathbf{p}_2 = \mathbf{p}_2. 
     \end{gather*} 
    
    On the other hand, 
     \begin{align*}
             I_{i+1} = \mathbf{m}_{[i+1,i+1]}^{d_{i+1}} \mathbf{m}_{[i+1,i+2]}^{d_{i+2}} \cdots  \mathbf{m}_{[i+1,i-1]}^{d_{i-1}} \mathbf{m}_{[i+1,i]}^{d_{i}} = \mathbf{p}_2 \mathbf{m}_{[i+1,i]}^{0} = \mathbf{p}_2.
         \end{align*}
    
    Thus $I_i \cap I_{i+1} = I_{i+1}$, or equivalently, $I_{i} \supseteq I_{{i+1}}.$

         To prove (ii), assume that $ \displaystyle \supp (m) = \bigcup \limits_{t=1}^{\ell} [i_t,j_t]$ where $i_t \leq j_t$, $j_t + 1 < i_{t+1}$. It follows from (i)   that  $I_j \supset I_{j_t + 1}$, for all $j_t<j \leq i_{t+1}$. By Lemma \ref{l:truedecomposition}, 
         \begin{align*}
             I=I_1 + \cdots + I_{n} = \sum \limits_{t=1}^{\ell} \sum \limits_{u=i_{t}}^{j_{t}} I_{u+1} = \sum_{ u\in {\rm{supp}}(m)} I_{u+1}.
         \end{align*}

This completes the proof.
     
 \end{proof}

 Consequently, we immediately obtain the following corollary. 
 
 \begin{corollary}
   If $d_i=0$ and $j$ is the smallest index such that $j>i$ and $d_j\not=0$, then    $X_{i} \supseteq X_{{\ell}}$ for all $\ell \in [i,j]$. 
\end{corollary}

 \begin{theorem}\label{c:symmetricresolution1}
     For any generator   $m$ of $I=\mathbf{m}^{d}$, the complex 
     \begin{align*}
         \displaystyle X =  \bigcup_{u \in {\rm{supp}}(m)} X_{u+1}
     \end{align*}
      supports a minimal resolution of $I$.
 \end{theorem}

	\begin{example}
		\phantomsection\label{e:threevariablepicture}
		
		The complexes in Figure \ref{fig:3} are minimal resolutions of $(a,b,c)^4$, centered around $ab^{2}$ and around $b^{2}c^{2}$.  Observe that the complex on the left contains boxes of all three orientations, while the complex on the right has no yellow boxes (because $a$ does not divide $b^{2}c^{2}$).

\begin{figure}[hbt]
    \centering
    \begin{multicols}{2}

     \begin{tikzpicture}[scale=0.6][line join = round, line cap = round]

\fill[rose] (-4,0) --(-2,3) -- (0,3) -- (2, 0) --(-4,0);
\fill[sand] (4,0) --(1,4.5) -- (0,3) -- (2, 0) --(4,0);
  \fill[cyan] (0,6) --(1,4.5) -- (0,3) -- (-2,3) --(0,6);
  
\draw[-, thick]  (-4,0) -- (4, 0) -- (0,6) -- (-4,0);

\draw[-, thick]  (-3,1.5) -- (-2, 0);
\draw[-, thick]  (-2,3) -- (0, 0);
\draw[-, thick]  (-2,3) -- (0,3);
\draw[-, thick]  (2, 0) -- (0,3);
\draw[-, thick]  (-3,1.5) -- (1,1.5);

\draw[-, thick]  (2, 0) -- (3,1.5);
\draw[-, thick]  (1,1.5) -- (2,3);
\draw[-, thick]  (0,3) -- (1,4.5);

\draw[-, thick]  (-1,4.5) -- (1,4.5);

\fill[] (-4,0) circle (2pt) node[below] {$a^4$};
\fill[] (-2, 0) circle (2pt) node[below] {$a^3b$};
\fill[] (0, 0) circle (2pt) node[below] {$a^2b^2$};
\fill[] (2, 0) circle (2pt) node[below] {$ab^3$};
\fill[] (4, 0) circle (2pt) node[below] {$b^4$};

\fill[] (-3,1.5) circle (2pt) node[left] {$a^3c$};
\fill[] (-1,1.5) circle (2pt) node[left,below] {};
\fill[] (-1.3,1.5) circle (0 pt) node[below] {$a^2bc$};
\fill[] (1,1.5) circle (2pt) node[right] {$ab^2c$};
\fill[] (3,1.5) circle (2pt) node[right] {$b^3c$};

\fill[] (-2,3) circle (2pt) node[left] {$a^2c^2$};
\fill[red] (0,3) circle (2pt) node[right] {$abc^2$};
\fill[] (2,3) circle (2pt) node[right] {$b^2c^2$};

\fill[] (-1,4.5) circle (2pt) node[left] {$ac^3$};
\fill[] (1,4.5) circle (2pt) node[right] {$bc^3$};

\fill[] (0,6) circle (2pt) node[above] {$c^4$};

\end{tikzpicture}

        \columnbreak

        \begin{tikzpicture}[scale=0.6][line join = round, line cap = round]

\fill[rose] (-4,0) --(-2,3) -- (2,3) -- (4, 0) --(-4,0);
\fill[cyan] (0,6) --(2,3) -- (-2,3) --(0,6);

\draw[-, thick]  (-4,0) -- (4, 0) -- (0,6) -- (-4,0);
\draw[-, thick]  (-3,1.5) -- (3,1.5);
\draw[-, thick]  (-2,3) -- (2,3);

\draw[-, thick]  (-3,1.5) -- (-2, 0);
\draw[-, thick]  (-2,3) -- (0,0);
\draw[-, thick]  (0,3) -- (2, 0);

\draw[-, thick]  (-1,4.5) -- (1,4.5);
\draw[-, thick]  (1,4.5) -- (0,3);

\fill[] (-4,0) circle (2pt) node[below] {$a^4$};
\fill[] (-2, 0) circle (2pt) node[below] {$a^3b$};
\fill[] (0, 0) circle (2pt) node[below] {$a^2b^2$};
\fill[] (2, 0) circle (2pt) node[below] {$ab^3$};
\fill[] (4, 0) circle (2pt) node[below] {$b^4$};

\fill[] (-3,1.5) circle (2pt) node[left] {$a^3c$};
\fill[] (-1,1.5) circle (2pt) node[left,below] {};
\fill[] (-1.3,1.5) circle (0 pt) node[below] {$a^2bc$};
\fill[] (1,1.5) circle (2pt) node[left,above] {};
\fill[] (0.7,1.5) circle (0pt) node[below] {$ab^2c$};
\fill[] (3,1.5) circle (2pt) node[right] {$b^3c$};

\fill[] (-2,3) circle (2pt) node[left] {$a^2c^2$};
\fill[] (0,3) circle (2pt) node[right] {};
\fill[] (-0.3,3) circle (0 pt) node[below] {$abc^2$};
\fill[red] (2,3) circle (2pt) node[right] {$b^2c^2$};

\fill[] (-1,4.5) circle (2pt) node[left] {$ac^3$};
\fill[] (1,4.5) circle (2pt) node[right] {$bc^3$};

\fill[] (0,6) circle (2pt) node[above] {$c^4$};

\end{tikzpicture}
    \end{multicols}
    
    \caption{Minimal linear resolutions of $(a,b,c)^4$.}
    \label{fig:3}
\end{figure}

	\end{example}

\begin{example} 
The complex in Figure \ref{fig:abcd3} supports a minimal resolution of $I=(a,b,c,d)^3$ (centered around $bcd$).
    \begin{figure}[hbt]
        \centering
        \begin{tikzpicture}[scale=0.2][line join = round, line cap = round]
    
\draw[-, thick]  (-15,0) -- (-9,-2);
\draw[-, thick]  (-3,-4) -- (-9,-2);
\draw[-, thick]  (-3,-4) -- (3,-6);
\draw[-, thick]  (7,-4) -- (3,-6);
\draw[-, thick]  (7,-4) -- (11,-2);
\draw[-, thick]  (-15,0) -- (-10,6);
\draw[-, thick]  (-9,-2) -- (-10,6);
\draw[-, thick]  (-4,4) -- (-10,6);
\draw[-, thick]  (-4,4) -- (-3,-4);
\draw[-, thick]  (-4,4) -- (2,2);
\draw[-, thick]  (3,-6) -- (2,2);
\draw[-, thick]  (7,-4) -- (2,2);
\draw[-, thick]  (6,4) -- (2,2);
\draw[-, thick]  (6,4) -- (11,-2);
\draw[-, dashed]  (-15,0) -- (-5,0);
\draw[-, dashed]  (-5,0) -- (5,0);
\draw[-, dashed]  (5,0) -- (11,-2);
\draw[-, dashed]  (-5,0) -- (-10,6);
\draw[-, dashed]  (-5,0) -- (-9,-2);
\draw[-, dashed]  (-4,4) -- (1,-2);
\draw[-, dashed]  (-3,-4) -- (1,-2);
\draw[-, dashed]  (7,-4) -- (1,-2);
\draw[-, dashed]  (-5,0) -- (1,-2);
\draw[-, dashed]  (5,0) -- (1,-2);
\draw[-, dashed]  (-10,6) -- (0,6);
\draw[-, dashed]  (5,0) -- (0,6);
\draw[-, dashed]  (-4,4) -- (0,6);
\draw[-, dashed]  (0,6) -- (6,4);

\draw[-, thick]  (15,0) -- (11,-2);
\draw[-, thick]  (15,0) -- (10,6);
\draw[-, thick]  (5,12) -- (10,6);
\draw[-, thick]  (11,-2) -- (10,6);
\draw[-, thick]  (6,4) -- (5,12);
\draw[-, thick]  (1,10) -- (5,12);
\draw[-, thick]  (1,10) -- (2,2);
\draw[-, dashed]  (5,0) -- (15,0);
\draw[-, dashed]  (5,0) --  (10,6);
\draw[-, dashed]  (0,6) --  (5,12);

\draw[-, thick]  (5,12) -- (0,18);
\draw[-, thick]  (-5,12) -- (0,18);
\draw[-, thick]  (-5,12) -- (-10,6);
\draw[-, thick]  (0,18) --  (1,10);
\draw[-, thick]  (-5,12) --  (1,10);
\draw[-, thick]  (-4,4) --  (1,10);
\draw[-, dashed]  (-5,12) --  (5,12);

\fill[] (-15,0) circle (4pt) node[left] {$a^3$};
\fill[] (-9,-2) circle (4pt) node[below] {$a^2b$};
\fill[] (-3,-4) circle (4pt) node[below] {$ab^2$};
\fill[] (3,-6) circle (4pt) node[below] {$b^3$};
\fill[] (7,-4) circle (4pt) node[below] {$b^2c$};
\fill[] (1,-2) circle (4pt) node[below] {$abc$};
\fill[] (-5,0) circle (4pt) node[below] {$a^2c$};
\fill[] (-10,6) circle (4pt) node[left] {$a^2d$};

\fill[red] (6,4) circle (4pt) node[right] {$bcd$};
\fill[] (11,-2) circle (4pt) node[below] {$bc^2$};

\fill[] (15,0) circle (4pt) node[right] {$c^3$};
\fill[] (10,6) circle (4pt) node[right] {$c^2d$};
\fill[] (5,12) circle (4pt) node[right] {$cd^2$};
\fill[] (0,18) circle (4pt) node[above] {$d^3$};
\fill[] (-5,12) circle (4pt) node[left] {$ad^2$};

\end{tikzpicture}
        \caption{A minimal resolution of $(a,b,c,d)^3$.}
        \label{fig:abcd3}
    \end{figure}
\end{example}


       	\section{A minimal resolution of the ideal   obtained by removing $m= x_{1}^{d_{1}}\cdots x_{n}^{d_{n}}$ from the generators of $I= (x_1,\dots,x_n)^d$.}

    \label{sec6}

       In this section, we will use the polytopal resolution for $I=(x_{1},\dots, x_{n})^{d}$ established in Theorem \ref{c:symmetricresolution} to describe the minimal resolution of the pinched power ideal $\widehat{I}$ obtained by removing $m$.  
        The technique is to modify the resolution of $I$ (centered on $m$) obtained in the previous section by replacing all boxes containing $m$ with their (topological) union, a polytope that does not contain $m$ as a vertex.  Essentially the same operation appears in \cite{JenniferBiermann}, where Biermann uses it to construct a cellular complex supporting the resolution of the complement of an $n$-cycle.

        As in the previous section, the resolution of $\widehat{I}$ depends strongly on the support of $m$.  For example, if $m=x_{j}^{d}$ is a pure power, then $\widehat{I}$ is a Borel-with-holes ideal  \cite{CharalambousEvans} and so is resolved by a truncation of the Eliahou-Kervaire resolution (or by a truncation of the complex of boxes).

	\begin{notation}
		\phantomsection\label{4.1}
		We require a tremendous amount of notation.  The general philosophy is to adapt notation from the previous section, and add a hat when $m$ has been removed.  Fix a monomial $m$ of degree $d$, and write $m= x_{1}^{d_{1}}\cdots x_{n}^{d_{n}}$.  We denote:
            \begin{itemize}
            \item $I=(x_{1},\dots, x_{n})^{d}$, the $d^{\text{th}}$ power of the maximal ideal, and $X$, the polytopal complex supporting the resolution of $I$ centered at $m$, as constructed in the previous section. 
			\item $\widehat{I}$  the ideal obtained by removing $m$ from the generators of $I=(x_1,\dots,x_n)^d$. 
			\item $Y$ the star of $m$ in $X$, that is, the subcomplex of $X$ generated by all facets containing $m$.
            \item $J=\mathbf{m}_{1}^{d_{1}}\mathbf{m}_{2}^{d_{2}}\dots\mathbf{m}_{n}^{d_n}$, 
            the ideal supported by $Y$.  
			\item $\widehat{J}$ the ideal obtained by removing $m$ from the generators of $J$.

            \item $\widehat{X_{i}}$ the complex obtained by removing all faces containing $m$ from $X_{i}$.
            \item $Y_{i}=\Gamma_{i}(m)$ (so $Y=\bigcup Y_{i}$), the collection of boxes containing $m$ inside $X$.
            \item $\widehat{Y}$ the polytope obtained by forgetting the interior structure (i.e., the faces containing $m$) of $Y$.  That is, $\widehat{Y}$ has a single $(n-1)$-dimensional face, which is the (interior of the) union of the $Y_{i}$; for $d<n-1$ its $d$-dimensional faces are the $d$-dimensional faces of the $Y_{i}$ which don't contain $m$. 
            
            \item    $\widehat{I_i}$  the ideal obtained by removing $m$ from the generators of $I_i$. Then $\widehat{I_i}$ is a $Q_i$-Borel ideal and $\widehat{X_i}$ is the complex-of-boxes resolving $\widehat{I_{i}}.$  
            
            \item  $\widehat{J_i} $ the ideal obtained by removing $m$ from the generators of $J_i$, and $\widehat{Y_i}$ the complex obtained by deleting all faces of $Y_i$ containing $m$. 
            \item $\displaystyle \widehat{X}=\widehat{Y}\cup \bigcup_{i=1}^n\widehat{X_{i}}$, the complex obtained from $X$ by replacing $Y$ with $\widehat{Y}$.

		\end{itemize} 
	\end{notation}

 \begin{remark}
 We have $\displaystyle Y= \bigcup_{u\in {\rm{supp}}(m)}  Y_{u+1}$ and $\displaystyle \widehat{X}=  \widehat{Y}\cup \bigcup_{u\in {\rm{supp}}(m)}\widehat{X_{u+1}}$. 
 \end{remark}

 \begin{remark}  
 
 The defining half-spaces of $\widehat{Y}$ are

     \[
    \left\{\begin{matrix}
    x_{1}+\dots+x_{n}=d,\\
    d_{a}+\dots+d_{b}-1\leq x_{a}+\dots+x_{b}\leq d_{a}+\dots+d_{b}+1,\\
     0\leq x_{a}+\dots+x_{b}\leq d
     \end{matrix}
     \right\}
     \]
     (for all cyclic intervals $[a,b]$).
 
 \end{remark}

Observe that all $Y_{i}=\Gamma_{Q_i}(m)$ and $J_i$ satisfy the hypotheses of Lemma \ref{l:unionmanyboxes}, so their union resolves $J$.

	\begin{theorem}
		\phantomsection\label{th:3.17}
		The complex $Y$ supports the minimal resolution of $J$.
	\end{theorem}

	\begin{example}
		\phantomsection\label{example:abcd3-bcd}
		The complex in Figure \ref{fig:abcd3-bcd} supports a minimal resolution of\\ $J=\left({\color{red}bcd},abd,acd,abc,bd^2,cd^2,c^2d,b^2d, ac^2,bc^2,b^2c\right)$.
    \end{example}

		\begin{figure}[hbt] 
         \begin{center}
         \begin{tikzpicture}[scale=0.3][line join = round, line cap = round]

\draw[-, thick]  (-4,4) -- (1,10) --(5,12) -- (10,6) -- (11,-2) --  (7,-4) -- (1,-2) -- (-4,4); 
\draw[-, thick]  (-4,4) --  (2,2) -- (7,-4)  ; 
\draw[-, thick]  (1,10) --  (2,2) --  (6,4) --  (5,12); 
\draw[-, thick]    (6,4) --  (11,-2); 

\draw[-, dashed]  (-4,4) --  (0,6) --  (6,4); 
\draw[-, dashed]   (5,12) --  (0,6)  ; 
\draw[-, dashed]  (0,6) -- (5,0) -- (11,-2); 
\draw[-, dashed]  (1,-2) -- (5,0) -- (10,6); 

\fill[] (7,-4) circle (4pt) node[below] {$b^2c$}; 
\fill[] (1,-2) circle (4pt) node[below] {$abc$}; 

\fill[] (-4,4) circle (4pt) node[left] {$abd$}; 

\fill[] (2,2) circle (4pt) node[left] {};
\fill[] (1.5,2) circle (0pt) node[below] {$b^2d$};

\fill[] (0,6) circle (4pt) node[below] {$acd$}; 
\fill[red] (6,4) circle (4pt) node[right] {$bcd$}; 
\fill[] (11,-2) circle (4pt) node[below] {$bc^2$}; 

\fill[] (5,0) circle (4pt) node[right] {}; 
\fill[] (5.5,0.5) circle (0pt) node[right] {$ac^2$}; 

\fill[] (10,6) circle (4pt) node[right] {$c^2d$}; 
\fill[] (5,12) circle (4pt) node[above] {$cd^2$}; 

\fill[] (1,10) circle (4pt) node[above] {$bd^2$}; 

\end{tikzpicture}
			
		\end{center}
         \caption{A minimal resolution of $J=  (b,c)(c,d)(d,a,b)$, inside $I=(a,b,c,d)^3$. } 
         \label{fig:abcd3-bcd}
    \end{figure}

	\begin{lemma}
		\phantomsection\label{4.2}
		The complex $\widehat{Y}$ supports a minimal resolution of $\widehat{J}$.
	\end{lemma}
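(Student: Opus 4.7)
The plan is to mimic the structure of the proof of Proposition \ref{p:Y3hatresolvesJ3}, showing via Theorem \ref{2.8} that $\widehat{Y}_{\leq\alpha}$ is acyclic for every multidegree $\alpha$.  The three easy cases will carry over almost verbatim.  If $m\nmid \alpha$, then neither the top cell of $\widehat{Y}$ (which has label $m^{2}$) nor any face of $Y$ containing $m$ appears in $\widehat{Y}_{\leq\alpha}$, so $\widehat{Y}_{\leq\alpha}=Y_{\leq\alpha}$ as subcomplexes, which is acyclic by Corollary \ref{c:3.17}.  If $m^{2}\mid \alpha$, then the top cell of $\widehat{Y}$ is present and $\widehat{Y}_{\leq\alpha}$ coincides with $Y_{\leq\alpha}$ as a topological space, just with a different cellular decomposition.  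If $\alpha=m$, then $\widehat{Y}_{\leq\alpha}$ is empty, since $m$ is not a vertex of $\widehat{Y}$ and no other vertex of $\widehat{Y}$ has label dividing $m$.

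The main obstacle is the remaining case, where $\alpha\neq m$ is divisible by $m$ but not by $m^{2}$.  Unlike in the three-variable setting (Lemma \ref{l:3varbadmultidegrees}), writing down an explicit deformation retract multidegree by multidegree becomes unwieldy, so I will invoke Mayer--Vietoris instead.  Let $A$ denote the closed star of $m$ in $Y_{\leq\alpha}$ and let $B=\widehat{Y}_{\leq\alpha}$.  Then $A\cup B=Y_{\leq\alpha}$ is acyclic by Corollary \ref{c:3.17}, $A$ is contractible (being star-shaped from $m$ via linear interpolation inside each closed cell containing $m$), and $A\cap B$ is the link of $m$ in $Y_{\leq\alpha}$.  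The Mayer--Vietoris sequence then collapses to $\widetilde{H}_{\bullet}(B)\cong \widetilde{H}_{\bullet}(A\cap B)$, so it suffices to prove that this link is acyclic.

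To identify the link, introduce $U=\{k : x_{k-1}\mid \alpha/m\}$; the case hypothesis forces $\emptyset\ne U\ne\{1,\dots,n\}$.  Using the box description of Remark \ref{r:structureOfY}, any face $F$ of $Y$ containing $m$ must have $F_{k}\in\{\{x_{k}\},\{x_{k-1},x_{k}\}\}$, and the condition $\ell(F)\mid \alpha$ translates into $\{k : |F_{k}|=2\}\subseteq U$.  The subfaces of such $F$ that omit $m$ then sweep out exactly the faces of the $|U|$-dimensional box $C=\prod_{k\in U}[x_{k-1},x_{k}]$ which do not contain the distinguished vertex of $C$ corresponding to $m$.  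A direct exponent check (the only variables that can appear with exponent $2$ in the label of such a face are those in the support of $\alpha/m$, i.e., already appear with exponent $\geq 2$ in $\alpha$) confirms that every face of $C$ automatically has label dividing $\alpha$, so the link coincides, as a topological subspace of $Y$, with the anti-star of that vertex in $C$.  This anti-star is itself a smaller instance of the corner complex from Remark \ref{r:structureOfY}, in dimension $|U|-1$, and in particular is contractible, completing the argument.
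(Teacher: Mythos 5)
Your proof is correct, and the three easy cases coincide with the paper's.  For the remaining multidegrees the combinatorial heart of your argument is exactly the paper's: both hinge on showing that the faces of $Y_{\leq\alpha}$ containing $m$ are precisely the faces of the single positive-dimensional box $C=\mathcal{P}_{m,\alpha}=\prod_{k\in U}\{x_{k-1},x_k\}\times\prod_{k\notin U}\{x_k\}$ (your $U$ matches the paper's indexing after the shift $j=i+1$).  Where you diverge is the topological step.  The paper, having exhibited this unique facet through $m$, simply invokes the Nagel--Reiner deletion lemma (Lemma \ref{2.25}) to get a homotopy equivalence $Y_{\leq\alpha}\simeq \widehat{Y}_{\leq\alpha}$; you instead run Mayer--Vietoris on the cover by the closed star $A=C$ and the deletion $B=\widehat{Y}_{\leq\alpha}$, reducing to contractibility of $A\cap B$, the anti-star of $m$ in the cube $C$, which you identify with a $(\lvert U\rvert-1)$-dimensional corner complex.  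In effect you reprove the needed special case of Lemma \ref{2.25} by hand; this makes the argument self-contained (and the identification of the link as a smaller corner is a nice structural observation), at the cost of a page of extra verification, whereas the paper's route is shorter but leans on the citation.  Two details to make explicit in a polished version: justify $\emptyset\neq U\neq\{1,\dots,n\}$ either by first reducing to $\alpha\mid m^{2}$ or by noting that $\deg\alpha>n$ forces some $x_{i}^{2}\mid\alpha$; and record why the corner complex is contractible (e.g., the straight-line retraction of each facet $x_{k}=0$ onto the corner vertex), since Remark \ref{r:structureOfY} does not state this.
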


To prove Lemma \ref{4.2}, we will show that $\widehat{Y}_{\leq \alpha}$ is acyclic for all multidegrees $\alpha$. Indeed, we will make use of a lemma of Nagel and Reiner \cite{NagelReiner} to show that $\widehat{Y}_{\leq \alpha}$ is homotopic to $Y_{\alpha}$, which is acylic by Corollary \ref{th:3.17} and Theorem \ref{2.8}. 

    \begin{lemma}\cite[Lemma 6.4]{NagelReiner} 
		\phantomsection\label{2.25}
		Let $\mathcal{C}$ be a  polytopal complex and $v$ a vertex in $\mathcal{C}$ that lies in a uinique facet $P$, and assume that $P$ has strictly positive dimension.
		
		Then the vertex-induced subcomplex $\mathcal{C} \smallsetminus \{v\}$, obtained by deleting $v$ and all faces that contain it, is homotopy equivalent to  $\mathcal{C}$.
	\end{lemma}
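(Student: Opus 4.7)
The plan is to exploit the fact that removing $v$ only modifies the single facet $P$ containing it. Let $A$ denote the antistar of $v$ in $P$, i.e., the subcomplex of $P$ consisting of those faces of $P$ that do not contain $v$. Since $v$ lies in no other facet of $\mathcal{C}$, every face of $\mathcal{C}$ containing $v$ is a face of $P$, so $\mathcal{C} \smallsetminus \{v\}$ is obtained from $\mathcal{C}$ by replacing $P$ and its cofaces of $v$ with $A$. At the level of underlying spaces this yields a pushout decomposition
\[
|\mathcal{C}| \;=\; |\mathcal{C} \smallsetminus \{v\}| \;\cup_{|A|}\; |P|,
\]
and so it suffices to show that the inclusion $|A| \hookrightarrow |P|$ is both a cofibration and a homotopy equivalence; the gluing lemma for pushouts along a cofibration then promotes this to the desired conclusion that $|\mathcal{C} \smallsetminus \{v\}| \hookrightarrow |\mathcal{C}|$ is a homotopy equivalence.

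The cofibration property is automatic since $A$ is a regular CW subcomplex of $P$.  For the homotopy equivalence, my first instinct is to construct an explicit strong deformation retract of $|P|$ onto $|A|$ by radial projection from $v$.  For any $x \in |P| \smallsetminus \{v\}$, the ray from $v$ through $x$ exits $P$ at a unique point $\pi(x)$, and $\pi(x)$ must lie in $|A|$: if $\pi(x)$ lay in the relative interior of a face $F$ containing $v$, then by convexity of $F$ the entire segment from $v$ to $\pi(x)$ would lie in $F$, so the ray would continue inside $F \subseteq P$ past $\pi(x)$, contradicting that $\pi(x)$ is the exit point of $P$. The straight-line homotopy $H(x,t) = (1-t)x + t\pi(x)$ is then a strong deformation retract of $|P| \smallsetminus \{v\}$ onto $|A|$ that fixes $|A|$ pointwise.

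The main obstacle is extending this continuously across $v$, since $\pi$ does not admit a continuous extension there. The cleanest workaround is to abandon the explicit deformation at the last moment and instead invoke Whitehead's theorem: $|P|$ is contractible, being convex, and $|A|$ is also contractible --- using the hypothesis $\dim P \geq 1$, the space $|A|$ equals the boundary sphere of the $d$-ball $P$ with an open neighborhood of $v$ removed, hence is homeomorphic to a closed $(d-1)$-ball. The inclusion $|A| \hookrightarrow |P|$ is a CW-pair inclusion, hence a cofibration, and since both spaces have vanishing homotopy groups in all dimensions it is also a weak equivalence, so Whitehead's theorem yields the homotopy equivalence. Combining this with the pushout reduction above gives the lemma.
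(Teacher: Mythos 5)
The paper does not prove this lemma at all: it is quoted verbatim from Nagel--Reiner \cite{NagelReiner}*{Lemma 6.4}, so there is no in-paper argument to compare against, and your proposal should be judged on its own. Its architecture is sound: the observation that every face containing $v$ is a face of the unique facet $P$, so that $|\mathcal{C}|$ is the union of the closed pieces $|\mathcal{C}\smallsetminus\{v\}|$ and $|P|$ glued along the antistar $|A|$, is correct (note that $G\subseteq P$ forces $G=G\cap P$ to be a face of $P$); the inclusion $|A|\hookrightarrow |P|$ is a subcomplex inclusion, hence a cofibration; and a cofibration which is a homotopy equivalence makes $|A|$ a strong deformation retract of $|P|$, which glues with the identity on $|\mathcal{C}\smallsetminus\{v\}|$ to retract $|\mathcal{C}|$ onto $|\mathcal{C}\smallsetminus\{v\}|$. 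Your radial argument that the exit point $\pi(x)$ lies in a face not containing $v$ is also correct, since $v\notin\operatorname{aff}(F)$ for any face $F$ with $v\notin F$.

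The one genuinely thin step is the contractibility of $|A|$. The inference ``$|A|$ is the boundary sphere with an open neighborhood of $v$ removed, hence a closed $(d-1)$-ball'' is not a valid general principle (a sphere minus an arbitrary open neighborhood of a point need not be a ball), and the true underlying fact --- that the antistar of a vertex in the boundary complex of a polytope is a $(d-1)$-ball --- requires an argument, e.g.\ a Bruggesser--Mani line shelling, or the observation that radial projection from $v$ maps $|A|$ homeomorphically onto the cross-section $P\cap H$ for a hyperplane $H$ strictly separating $v$ from the other vertices. In fact you do not need the ball statement at all, and your own construction closes the gap in one line: since $v$ is an extreme point of $P$, the set $|P|\smallsetminus\{v\}$ is convex, hence contractible, and your straight-line radial homotopy exhibits $|A|$ as a deformation retract of it, so $|A|$ is contractible; any map between contractible spaces is a homotopy equivalence (no appeal to Whitehead is needed, though it does no harm). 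With that substitution your proof is complete, and it is likely close in spirit to the deformation-retraction argument of Nagel and Reiner, which the present paper simply imports.
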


	\begin{proof}[Proof of Lemma \ref{4.2}]

		Let $\alpha=x_1^{\alpha_1} \cdots x_n^{\alpha_n}$ be a monomial. We will show that $\widehat{Y}_{\leq \alpha}$ is   acyclic.     

            Assume that $m=x_{i_1}^{d_{i_1}}\cdots x_{i_s}^{d_{i_s}}$, where $i_1<\cdots<i_s$ and ${\rm{supp}}(m) = \{i_1,\dots,i_s\}$. 
		
		We first observe that if $m=x_{i_1}^{d_{i_1}}\cdots x_{i_s}^{d_{i_s}}$ does not divide $\alpha$, then $\widehat{Y}_{\leq \alpha}=Y_{\leq \alpha}$   which is
		acyclic because $Y$ supports a minimal resolution by Corollary \ref{th:3.17}.

        Next observe that, if $\alpha$ is a multiple of $(x_1\cdots x_n)m$, the least common multiple of the generators of $J$, then $\widehat{Y}_{\leq \alpha}=\widehat{Y}$, which is homeomorphic to $Y$.
        Thus, since $Y=Y_{\leq \alpha}$ and $Y$ supports a minimal resolution by Corollary \ref{th:3.17}, we have that $\widehat{Y}_{\leq \alpha}$ is acyclic.   

        Finally, assume that $\alpha$ is divisible by $m$ but not by $(x_1\cdots x_n)m$. Then, without loss of generality, we may assume that  
        \begin{gather*}
            \alpha = (x_{u_1} \cdots x_{u_t})(x_{v_1} \cdots x_{v_\ell}) m,
        \end{gather*}
        where $A= \{x_{u_1}, \dots, x_{u_t}\} \subset {\rm{supp}}(m)$, $B=\{x_{v_1}, \dots, x_{v_\ell}\} $ and ${\rm{supp}}(m)$ are disjoint,  and $A\cup B$ is a proper subset of $\{x_1,\dots,x_n\}$. Denote $B_{j} = B \cap \{x_{i_j}, \dots , x_{i_{j+1}}\}$. 

         If $A=\supp(m)$, let $R'=k[A,B]$ and $m'=\prod_{x_{i}\in A\cup B}x_{i}^{d_{i}}$.  Construct $Y'$ and $J'$ analogously to $Y$ and $J$, but for this smaller collection of variables.  Then $Y_{\leq \alpha}=m'\times Y'\cong Y'$ is acyclic because $Y'$ supports the minimal resolution of $J'$.

        Next, consider the case that $A$ is a proper subset of ${\rm{supp}}(m)$. We claim that $m$ is contained in a unique facet of $Y_{\leq \alpha}$.  By Remark \ref{r:squarefreeSameBox}, we may assume that $m$ is squarefree.  Let $\mathcal{P}_{m,\alpha}$ be the box
            \[\mathcal{P}_{m,\alpha} = \left(\prod_{i_j \in A} \left( \{x_{i_j}, x_{i_{j+1}}\} \cup B_j \right) \right) \times \left(\prod_{i_j \in {\rm{supp}}(m) \smallsetminus A} \left( \{ {x_{i_{j+1}}}\} \cup B_j \right) \right).
        \]

        We claim that $\mathcal{P}_{m,\alpha}$ is the unique facet of ${Y}_{\leq \alpha}$ containing $m$.  First, observe that $m\in \mathcal{P}_{m,\alpha}$ since one can choose $x_{i_j+1}$ from every interval in the product.  Next, observe that the multidegree of $\mathcal{P}_{m,\alpha}$ is $\gcd\left(\alpha,(x_1\cdots x_n)m\right)$, which is divisible by $\alpha$.  Thus $\mathcal{P}_{m,\alpha}$ is a face of ${Y}_{\leq \alpha}$.  

        It remains to show that every face of $Y_{\leq \alpha}$ containing $m$ is contained in $\mathcal{P}_{m,\alpha}$. The least common multiple of vertices of every such face is of the form 
        \begin{gather*}
            \alpha' = \left( \prod_{x_u \in A'} x_u\right)   \left( \prod_{x_v \in B'} x_v\right)  m,
        \end{gather*}
        where $A'\subset A$ and $B' \subset B$. Denote $B'_{j} = B' \cap \{x_{i_j}, \dots , x_{i_{j+1}}\}$.        
        Every such face must have the form 
        \[
        \left(\prod_{i_j \in A'} \left( \{x_{i_j}, x_{i_{j+1}}\} \cup B'_j \right) \right) \times \left(\prod_{i_j \in {\rm{supp}}(m) \smallsetminus A'} \left( \{ {x_{i_{j+1}}}\} \cup B'_j \right) \right),
        \]
        which implies that the face is contained in $\mathcal{P}_{m,\alpha}$.

    If $\alpha\neq m$, then $\mathcal{P}_{m,\alpha}$ has positive dimension and satisfies the hypotheses of Lemma \ref{2.25}, so $\widehat{Y}_{\leq \alpha}$ is acyclic.  If $\alpha=m$, then $\widehat{Y}_{\leq \alpha}$ is the empty complex, which is acyclic. 
	\end{proof}

\begin{example}
    The complex at the center in Figure \ref{f:4pinched}  supports a minimal resolution of 
    \begin{gather*}
        (a,b)(b,c)(c,d)(d,a)\smallsetminus \{abcd\}
    \end{gather*}  
    and  the complex in Figure \ref{f:bcd-bcd} supports a minimal resolution of 
    \begin{gather*}
        (b,c)(c,d)(d,a,b)\smallsetminus \{bcd\}
    \end{gather*}
    in $k[a,b,c,d]$.

     \begin{figure}[hbt] 
            \begin{center}
			\begin{tikzpicture}[scale=0.25][line join = round, line cap = round]

\draw[-, thick]  (-4,4) -- (1,10) --(5,12) -- (10,6) -- (11,-2) --  (7,-4) -- (1,-2) -- (-4,4); 
\draw[-, thick]  (-4,4) --  (2,2) -- (7,-4)  ; 
\draw[-, thick]  (1,10) --  (2,2)  ; 

\draw[-, dashed]  (-4,4) --  (0,6)  ; 
\draw[-, dashed]   (5,12) --  (0,6)  ; 
\draw[-, dashed]  (0,6) -- (5,0) -- (11,-2); 
\draw[-, dashed]  (1,-2) -- (5,0) -- (10,6); 

\fill[] (7,-4) circle (4pt) node[below] {$b^2c$}; 
\fill[] (1,-2) circle (4pt) node[below] {$abc$}; 

\fill[] (-4,4) circle (4pt) node[left] {$abd$}; 

\fill[] (2,2) circle (4pt) node[left] {};
\fill[] (1.5,2) circle (0pt) node[below] {$b^2d$};

\fill[] (0,6) circle (4pt) node[below] {$acd$}; 

\fill[] (11,-2) circle (4pt) node[below] {$bc^2$}; 

\fill[] (5,0) circle (4pt) node[right] {}; 
\fill[] (5.5,0.5) circle (0pt) node[right] {$ac^2$}; 

\fill[] (10,6) circle (4pt) node[right] {$c^2d$}; 
\fill[] (5,12) circle (4pt) node[above] {$cd^2$}; 

\fill[] (1,10) circle (4pt) node[above] {$bd^2$}; 

\end{tikzpicture}
	       \end{center}
         \caption{A minimal resolution of $(b,c)(c,d)(d,a,b)\smallsetminus \{bcd\}$, inside $I=(a,b,c,d)^3$ with $m=bcd$.}
         \label{f:bcd-bcd}
     \end{figure}
\end{example}

 We are now ready to prove that $\widehat{I}$ is minimally resolved by $\widehat{X}$.

\begin{theorem}\label{t:thetheorem}
The pinched power ideal $\widehat{I}$ has polytopal minimal resolution supported on the complex $\widehat{X}$.
\end{theorem}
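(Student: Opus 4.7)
The plan is to follow the same strategy that proved Lemma \ref{4.2}, with $\widehat{X}$ and $X$ playing the roles of $\widehat{Y}$ and $Y$, respectively. By Theorem \ref{2.8}, it suffices to prove that for every multidegree $\alpha$, the subcomplex $\widehat{X}_{\leq\alpha}$ is acyclic; I will deduce this from the known acyclicity of $X_{\leq\alpha}$ (Corollary \ref{c:symmetricresolution}) via a four-way case split on how $\alpha$ relates to $m=x_{1}\cdots x_{n}$. The structural fact I will exploit is that $\widehat{X}$ is obtained from $X$ by deleting every face containing $m$ and then gluing in the single top-dimensional face of $\widehat{Y}$, whose label is $m^{2}$; all other faces of $X$ and $\widehat{X}$ coincide.

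I would handle the four cases in the following order. If $m\nmid\alpha$, then no face containing $m$ appears in $X_{\leq\alpha}$ and the top face of $\widehat{Y}$ (labelled by $m^{2}$) is absent from $\widehat{X}_{\leq\alpha}$, so $\widehat{X}_{\leq\alpha}=X_{\leq\alpha}$. If $m^{2}\mid\alpha$, both $Y\subseteq X_{\leq\alpha}$ and $\widehat{Y}\subseteq\widehat{X}_{\leq\alpha}$ appear in full; outside $Y$ the two subcomplexes agree, and since $|\widehat{Y}|=|Y|$ as topological spaces, $\widehat{X}_{\leq\alpha}$ and $X_{\leq\alpha}$ are homeomorphic and hence have the same (acyclic) homology. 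If $\alpha=m$, then $\widehat{X}_{\leq m}$ is empty, which is acyclic by convention. The remaining case, $m\mid\alpha$, $m^{2}\nmid\alpha$, and $\alpha\neq m$, is the heart of the argument and the main obstacle.

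For that final case, I would proceed as follows. First, because the top face of $\widehat{Y}$ has label $m^{2}\nmid\alpha$, it is excluded from $\widehat{X}_{\leq\alpha}$; consequently $\widehat{X}_{\leq\alpha}$ coincides with the vertex-induced subcomplex $X_{\leq\alpha}\smallsetminus\{m\}$. Next, I would invoke the analysis from the proof of Lemma \ref{4.2}: every face of $X$ containing $m$ lies in its star $Y$, and within $Y_{\leq\alpha}$ the box $\mathcal{P}_{m,\alpha}$ is the unique maximal face containing $m$; therefore $\mathcal{P}_{m,\alpha}$ is also the unique facet of $X_{\leq\alpha}$ through $m$. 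Since $\alpha\neq m$, this facet has positive dimension, so Lemma \ref{2.25} applies and yields $X_{\leq\alpha}\smallsetminus\{m\}\simeq X_{\leq\alpha}$, which is acyclic. The only real subtlety is the bookkeeping needed to identify $\widehat{X}_{\leq\alpha}$ with $X_{\leq\alpha}\smallsetminus\{m\}$ and to confirm that $\mathcal{P}_{m,\alpha}$ remains maximal in $X_{\leq\alpha}$ (not merely in $Y_{\leq\alpha}$); both follow from the observation that the star of $m$ in $X$ is exactly $Y$.
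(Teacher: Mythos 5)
Your proposal is correct and follows essentially the same route as the paper's proof: the same case split on divisibility of $\alpha$ by $m$ and $m^{2}$, the same reduction to the unique facet $\mathcal{P}_{m,\alpha}$ identified in the proof of Lemma \ref{4.2}, and the same application of Lemma \ref{2.25}. If anything, you are slightly more explicit than the paper in isolating the case $\alpha=m$ and in checking that the facet has positive dimension before invoking Lemma \ref{2.25}.
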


\begin{proof}
Let $\alpha$ be given; we will verify that $\widehat{X}_{\leq \alpha}$ is acyclic.  As in the proof of Lemma \ref{4.2}, we show that in every case $\widehat{X}_{\leq \alpha}$ is homeomorphic to $X_{\leq \alpha}$.  

If $\alpha$ is not divisible by $m$, then $\widehat{X}_{\leq \alpha}=X_{\leq \alpha}$.  If $\alpha$ is divisible by $\deg(\widehat{Y})=(x_{1}\dots x_{n})m$, then $\widehat{X}_{\leq \alpha}$ contains $\widehat{Y}$ and $X_{\leq \alpha}$ contains $Y$, 
so the two complexes are homeomorphic. 

Otherwise, we may assume without loss of generality that $\alpha$ has full support.

In this case, following the proof of Lemma \ref{4.2}, $m$ is contained in a unique facet of $Y_{\leq \alpha}$ 
; since $m$ is not contained in any face of $X\smallsetminus Y$, it is thus contained in a unique facet of $X_{\leq \alpha}$.  By Lemma \ref{2.25}, $X_{\leq \alpha}$ is homotopic to $\widehat{X}_{\leq \alpha}$.  
\end{proof}

\begin{example} 
 The complex in Figure \ref{f:abcd3-bcd} supports a minimal resolution of  $(a,b,c,d)^3 \smallsetminus \{bcd\}$ in  $k[a,b,c,d]$.

     \begin{figure}[hbt] 
       
        \begin{center}
            \begin{tikzpicture}[scale=0.2][line join = round, line cap = round]
    
\draw[-, thick]  (-15,0) -- (-9,-2);
\draw[-, thick]  (-3,-4) -- (-9,-2);
\draw[-, thick]  (-3,-4) -- (3,-6);
\draw[-, thick]  (7,-4) -- (3,-6);
\draw[-, thick]  (7,-4) -- (11,-2);
\draw[-, thick]  (-15,0) -- (-10,6);
\draw[-, thick]  (-9,-2) -- (-10,6);
\draw[-, thick]  (-4,4) -- (-10,6);
\draw[-, thick]  (-4,4) -- (-3,-4);
\draw[-, thick]  (-4,4) -- (2,2);
\draw[-, thick]  (3,-6) -- (2,2);
\draw[-, thick]  (7,-4) -- (2,2);
 
\draw[-, dashed]  (-15,0) -- (-5,0);
\draw[-, dashed]  (-5,0) -- (5,0);
\draw[-, dashed]  (5,0) -- (11,-2);
\draw[-, dashed]  (-5,0) -- (-10,6);
\draw[-, dashed]  (-5,0) -- (-9,-2);
\draw[-, dashed]  (-4,4) -- (1,-2);
\draw[-, dashed]  (-3,-4) -- (1,-2);
\draw[-, dashed]  (7,-4) -- (1,-2);
\draw[-, dashed]  (-5,0) -- (1,-2);
\draw[-, dashed]  (5,0) -- (1,-2);
\draw[-, dashed]  (-10,6) -- (0,6);
\draw[-, dashed]  (5,0) -- (0,6);
\draw[-, dashed]  (-4,4) -- (0,6);

\draw[-, thick]  (15,0) -- (11,-2);
\draw[-, thick]  (15,0) -- (10,6);
\draw[-, thick]  (5,12) -- (10,6);
\draw[-, thick]  (11,-2) -- (10,6);
 
\draw[-, thick]  (1,10) -- (5,12);
\draw[-, thick]  (1,10) -- (2,2);
\draw[-, dashed]  (5,0) -- (15,0);
\draw[-, dashed]  (5,0) --  (10,6);
\draw[-, dashed]  (0,6) --  (5,12);

\draw[-, thick]  (5,12) -- (0,18);
\draw[-, thick]  (-5,12) -- (0,18);
\draw[-, thick]  (-5,12) -- (-10,6);
\draw[-, thick]  (0,18) --  (1,10);
\draw[-, thick]  (-5,12) --  (1,10);
\draw[-, thick]  (-4,4) --  (1,10);
\draw[-, dashed]  (-5,12) --  (5,12);

\fill[] (-15,0) circle (6pt) node[left] {$a^3$};
\fill[] (-9,-2) circle (6pt) node[below] {$a^2b$};
\fill[] (-3,-4) circle (6pt) node[below] {$ab^2$};
\fill[] (3,-6) circle (6pt) node[below] {$b^3$};
\fill[] (7,-4) circle (6pt) node[below] {$b^2c$};
\fill[] (1,-2) circle (6pt) node[below] {$abc$};
\fill[] (-5,0) circle (6pt) node[below] {$a^2c$};
\fill[] (-10,6) circle (6pt) node[left] {$a^2d$};

\fill[] (11,-2) circle (6pt) node[below] {$bc^2$};

\fill[] (15,0) circle (6pt) node[right] {$c^3$};
\fill[] (10,6) circle (6pt) node[right] {$c^2d$};
\fill[] (5,12) circle (6pt) node[right] {$cd^2$};
\fill[] (0,18) circle (6pt) node[above] {$d^3$};
\fill[] (-5,12) circle (6pt) node[left] {$ad^2$};

\end{tikzpicture}
       \caption{A minimal resolution of $\widehat{I}= (a,b,c,d)^3 \smallsetminus\{bcd\}$ in $k[a,b,c,d]$.}\label{f:abcd3-bcd}
       \end{center}
   \end{figure} 
 \end{example}

\section{Betti numbers}\label{sec7}

In this section, we will use the polytopal structure of the resolution in Theorem \ref{t:thetheorem} to compute the graded Betti numbers of $\widehat{I}$.
 
From the construction in Theorem \ref{t:thetheorem}, the difference between the minimal resolutions of $I$ and $\widehat{I}$ is the same as the difference between the resolutions of $J$ and $\widehat{J}$, namely the faces containing $m$ in $Y$.  Thus, since the Betti numbers of $I$ are known (see, for example, \cite{FranciscoMerminSchweigBorelgenerators}*{Example 6.5}), computing the Betti numbers of $\widehat{I}$ is equivalent to understanding the combinatorics of $Y$ and $\widehat{Y}$.  Doing so is a tremendously long, and not particularly enlightening, exercise.  Instead, we will take advantage of the mapping cone construction.

\begin{theorem}\label{t:mapping cone}
    Let $I=(x_{1},\dots, x_{n})^{d}$ and $m$ be a degree $d$ monomial which is not a pure power.  Then there is a short exact sequence
    \[
    0\to \frac{S}{\widehat{I}:m}(m^{-1})\xrightarrow{\times m}\frac{S}{\widehat{I}}\to \frac{S}{I}\to 0.
    \]
    (Here the $(m^{-1})$ represents a Serre twist of multidegree $m$.)  Furthermore, $\frac{S}{\widehat{I}:m}\cong \frac{S}{(x_{1},\dots, x_{n})}$ is minimally resolved by the Koszul complex.
\end{theorem}

The minimal resolution of $\frac{S}{I}$ is very well known; see for example \cite{FranciscoMerminSchweigBorelgenerators}*{Example 6.5} or \cite{BR}.
\begin{theorem}\label{t:VeroneseResolution}
The minimal resolution of $\frac{S}{I}$ is linear.  The multigraded Betti numbers are $\beta_{0,0}(\frac{S}{I})=1$, $\beta_{i+1,i+d}(\frac{S}{I})=\sum_{j=1}^{d}\binom{j+d-2}{d-1}\binom{j-1}{i}$ for $i<n$, and $\beta_{p,q}(\frac{S}{I})=0$ for all other $p,q$.     
\end{theorem}

$\frac{S}{I}$ is (non-minimally) resolved by the mapping cone resolution arising from the Koszul complex on $(x_{1},\dots, x_{n})$ and the resolution of $\frac{S}{\widehat{I}}$.  Since we know the two resolutions on the outside, understanding the resolution of $\frac{S}{\widehat{I}}$ is equivalent to understanding the cancellation in the mapping cone.

\begin{proposition}\label{p:cancellation}
    The cancellation in the mapping cone is precisely the non-linear strand in the resolution of $\frac{S}{\widehat{I}}$.  That is:
    \begin{enumerate}
        \item The linear strand of $\frac{S}{\widehat{I}}$ is not cancelled in the mapping cone.
        \item If $(p,q)\neq (i+1,i+d)$, and $\beta_{p,q}(\frac{S}{\widehat{I}})\neq 0$, then all generators for $\beta_{p,q}(\frac{S}{\widehat{I}})$ are cancelled in the mapping cone.
    \end{enumerate}
\end{proposition}

\begin{proof}
    For (1), observe that $\Tor_{i+1}(\frac{S}{\widehat{I}:m}(m^{-1}),k)$ is concentrated in degree $i+d+1$, while the linear strand of $\Tor_{i+1}(\frac{S}{\widehat{I}},k)$ is concentrated in degree $i+d$. Since the degrees are unequal, there can be no cancellation.

    For (2), observe that $\Tor_{i+1}(\frac{S}{I},k)$ is concentrated in degree $i+d$.  Thus any generators for $\Tor_{i+1}(\frac{S}{\widehat{I}},k)$ in different degrees must cancel. 
\end{proof}

To count the non-linear strand of $\widehat{I}$, it suffices to determine the non-linear faces of $\widehat{Y}$.  The Betti number $\beta_{p,p+d}$ is counted by the nonlinear faces of dimension $p$.

\begin{proposition}\label{p:nonLinearFaces}
    The number and dimension of the non-linear faces of $\widehat{Y}$ depends on the cardinality of $\supp(m)$.  In particular,
    \begin{enumerate}
        \item If $|\supp(m)|=1$, then $\widehat{Y}$ has no non-linear faces.
        \item If $|\supp(m)|=s\neq 1$, then the non-linear faces of $\widehat{Y}$ of dimension $p$ are in bijection with the subsets of $\{1,\dots,n\}\smallsetminus \supp(m)$ with  cardinality $n-1-p$.  Each such face has degree $d+p+1$.
    \end{enumerate}
\end{proposition}

\begin{proof}
    If $|\supp(m)|=1$, then, without loss of generality, $m=x_{1}^{d}$ and $\widehat{Y}=[x_{1}^{d}]\times \{x_{2},\dots, x_{n}\}$.

    Otherwise, the faces of $\widehat{Y}$ which are not faces of $Y$ are precisely the intersections of (the geometric realization of) $Y$ with the coordinate hyperplane $\langle x_{j}=0: j\in A\rangle$ for each subset $A$ such that  $A\cap \supp(m)=\varnothing$.  Each such intersection has dimension $n-1-|A|$ and multidegree $m\prod_{j\not\in A}x_{j}$.  
\end{proof}

\begin{corollary}
    If $|\supp(m)|=s>1$, then the graded Betti numbers of $\frac{S}{\widehat{I}}$ are given by:
\begin{align*}\label{c:BettiNumbers}
    \beta_{0,0}(\frac{S}{\widehat{I}})&=1\\
    \beta_{p,d+p-1}(\frac{S}{\widehat{I}})&=\beta_{p,d+p-1}(\frac{S}{I})-\binom{n}{p}+\binom{n-s}{n-p}\\
    \beta_{p,d+p}(\frac{S}{\widehat{I}})&=\binom{n-s}{n-p-1}\\
    \beta_{p,q}(\frac{S}{\widehat{I}})&=0&\text{otherwise}.
\end{align*}
The Poincar\'{e} polynomial $P_{\frac{S}{\widehat{I}}}(t,u)=\sum \beta_{p,q}(\frac{S}{\widehat{I}})t^{p}u^{q}$ is given by 
\[
P_{\frac{S}{\widehat{I}}}(t,u)=P_{\frac{S}{I}}(t,u) - tu^{d}(1+tu)^{n} + (1+t)t^{s-1}u^{s+d}(1+tu)^{n-s}.
\]
\end{corollary}
\begin{proof}
    The Betti numbers of the non-minimal mapping cone resolution for $\frac{S}{I}$ are obtained by adding the corresponding Betti numbers for $\frac{S}{\widehat{I}}$ and $\frac{S}{\widehat{I}:m}(m^{-1})$.  The Betti numbers for the minimal resolution are then obtained by subtracting the cancellations.  By Proposition \ref{p:cancellation}, these cancellations affect $\beta_{p,d+p}(\frac{S}{\widehat{I}})$ and $\beta_{p+1,d+p}(\frac{S}{\widehat{I}:m})(m^{-1})$.  The size of the cancellation is given by Proposition \ref{p:nonLinearFaces}.  Rearranging the algebra yields formulas for $\beta(\frac{S}{\widehat{I}})$.
\end{proof}

\begin{example}
    Let $I=(a,b,c,d)^{4}$.  The Betti table for $\frac{S}{I}$ is 
    \[
    \begin{matrix}
 & 0 & 1 & 2 & 3 & 4\\
\text{total:} & 1 & 35 & 84 & 70 & 20\\
0: & 1 & . & . & . & .\\
1: & . & . & . & . & .\\
2: & . & . & . & . & .\\
3: & . & 35 & 84 & 70 & 20
\end{matrix}.
    \]
The Betti tables for $\frac{S}{\widehat{I}}$ for various choices of $m$ are given below.

\begin{tabular}{|c|c|c|}\hline
    $m$ & Betti table for $\frac{S}{\widehat{I}}$ &Difference\\\hline
    $a^{4}$ & $\begin{matrix}
 & 0 & 1 & 2 & 3 & 4\\
\text{total:} & 1 & 34 & 81 & 67 & 19\\
0: & 1 & . & . & . & .\\
1: & . & . & . & . & .\\
2: & . & . & . & . & .\\
3: & . & 34 & 81 & 67 & 19
\end{matrix}$ & $\begin{matrix}
&0 & 1 & 2 & 3 & 4\\
\text{total:}&. & -1 & -3 & -3 & -1\\
0: & . & . & . & . & .\\
1: & . & . & . & . & .\\
2: & . & . & . & . & .\\
3: & . & -1 & -3 & -3 & -1
\end{matrix}$\\\hline
$a^{3}b$ & $\begin{matrix}
 & 0 & 1 & 2 & 3 & 4\\
\text{total:} & 1 & 34 & 81 & 67 & 19\\
0: & 1 & . & . & . & .\\
1: & . & . & . & . & .\\
2: & . & . & . & . & .\\
3: & . & 34 & 80 & 65 & 18\\
4: & . & . & 1 & 2 & 1
\end{matrix}$ & $\begin{matrix}
&0 & 1 & 2 & 3 & 4\\
\text{total:}&. & -1 & -3 & -3 & -1\\
0: & . & . & . & . & .\\
1: & . & . & . & . & .\\
2: & . & . & . & . & .\\
3: & . & -1 & -4 & -5 & -2\\
4: & . & . & 1 & 2 & 1
\end{matrix}$\\\hline
$a^{2}bc$  & $\begin{matrix}
 & 0 & 1 & 2 & 3 & 4\\
\text{total:} & 1 & 34 & 80 & 65 & 18\\
0: & 1 & . & . & . & .\\
1: & . & . & . & . & .\\
2: & . & . & . & . & .\\
3: & . & 34 & 80 & 64 & 17\\
4: & . & . & . & 1 & 1
\end{matrix}$ & $\begin{matrix}
&0 & 1 & 2 & 3 & 4\\
\text{total:}&. & -1 & -4 & -5 & -2\\
0: & . & . & . & . & .\\
1: & . & . & . & . & .\\
2: & . & . & . & . & .\\
3:&. & -1 & -4 & -6 & -3\\
4:&. & . & . & 1 & 1
\end{matrix}$\\\hline
$abcd$ & $\begin{matrix}
 & 0 & 1 & 2 & 3 & 4\\
\text{total:} & 1 & 34 & 80 & 65 & 18\\
0: & 1 & . & . & . & .\\
1: & . & . & . & . & .\\
2: & . & . & . & . & .\\
3: & . & 34 & 80 & 64 & 16\\
4: & . & . & . & . & 1
\end{matrix}$ & $\begin{matrix}
&0 & 1 & 2 & 3 & 4\\
\text{total:}&. & -1 & -4 & -6 & -3\\
0: & . & . & . & . & .\\
1: & . & . & . & . & .\\
2: & . & . & . & . & .\\
3:&. & -1 & -4 & -6 & -4\\
4:&. & . & . & . & 1
\end{matrix}$\\\hline
\end{tabular}
    
\end{example}



\section{Further questions}\label{sec8}

Two natural questions arise when hoping to expand the work in this paper.

\begin{question}\label{q:DeleteMore}
    What happens to the resolution when we delete more than one monomial?
    In particular:
    \begin{enumerate}
        \item If we delete several monomials that are adjacent (via Borel moves)?
        \item If we delete monomials that are widely separated?
    \end{enumerate}
\end{question}

\begin{question}\label{q:BoxResolutions}
    Are there other nice combinatorial structures on the resolution of $I$ (or of $\widehat{I}$)?
\end{question}

A partial answer to Question \ref{q:DeleteMore}(1) is found in \cite{DE}, where Dao and Eisenbud classify the monomial ideals with ``almost-linear'' resolutions.  These ideals are characterized by the deletion of inverted simplices $\mathcal{S}_{d,f}=\{m:m \text{ divides } f\}$ from the generators of $\mathbf{m}^{d}$, for various monomials $f$ of degree at least $d$.  (It is necessary that all $\mathcal{S}_{d}$ be pairwise disjoint, and that, if $\deg(f)=d+t$, $x_{i}^{t}$ must divide $f$ for all $i$.)

  A similar intuition to our construction of $\widehat{X}$ works with at least the simplest cases of these almost-linear ideals.  It is tedious but straightforward to verify that, in three variables, if only one inverted simplex $\mathcal{S}{d,f}$ is removed, then replacing $S_{d,f}$ with a larger polygon and tiling the rest of the dilated simplex with boxes as in Figure \ref{f:7.2} yields a minimal resolution.  (Note that the grey ``hexagon'', which has the deleted monomials on its interior, is not actually a hexagon; for example, in Figure \ref{f:7.2}, it has nine vertices and edges.)

We speculate that a similar construction will work in general for these ideals.

\begin{question}\label{q:almostLinear}
Let $I$ be an almost-linear ideal, built by deleting several disjoint simplices $\mathcal{S}_{d,f}$.  For each $\mathcal{S}_{d,f}$, let $\mathcal{T}_{d,f}$ be the smallest polytope with lattice points as vertices and containing every point of $\mathcal{S}_{d,f}$ on its interior.  Under what conditions on the collection of $\mathcal{S}_{d,f}$ is $I$ minimally resolved by a polytopal complex consisting of the $\mathcal{T}_{d,f}$ and oriented boxes $\Gamma_{i}(m)$?
\end{question}

\begin{figure}[hbt]
\begin{multicols}{2}
\centering
    \begin{tikzpicture}[scale=0.3][line join = round, line cap = round]
        \fill[rose]   (0,0) -- (3,6) -- (7,6) -- (10,0) -- (0,0);
        \fill[sand]    (10,0) -- (8,4) -- (11,10) -- (16, 0) -- (10,0);
        \fill[cyan]     (3,6) -- (9,6) -- (11,10) -- (8,16) -- (3,6);
 
        \draw[-, thick] (0,0) -- (16, 0) -- (8,16) -- (0,0);

        \draw[-, thick] (8,0) -- (5,6);
        \draw[-, thick] (6,0)--(3,6) -- (7,6) -- (10,0);
        \draw[-, thick] (4,0)--(2,4) -- (8,4);
        \draw[-, thick] (2,0) -- (1,2) -- (9,2);
 
        \draw[-, thick] (8,4) -- (11,10);
        \draw[-, thick] (15,2) -- (14,0) -- (10,8);
        \draw[-, thick] (14,4) -- (12,0) -- (9,6);
        \draw[-, thick] (10,0) -- (13,6) ;
        \draw[-, thick] (9,2) -- (12,8) ; 
        
        \draw[-, thick] (7,6) -- (9,6);
        \draw[-, thick] (7,14) -- (9,14) -- (5,6);
        \draw[-, thick] (6,12) -- (10,12) -- (7,6);
        \draw[-, thick] (5,10) -- (11,10) ;
        \draw[-, thick] (4,8) -- (10,8) ;
 
        \fill[] (0,0) circle (4pt) node[below] {$a^8$};
        \fill[] (2,0) circle (4pt) node[below] { };
        \fill[] (4,0) circle (4pt) node[below] { };
        \fill[] (6,0) circle (4pt) node[below] { };
        \fill[] (8,0) circle (4pt) node[below] { };
        \fill[] (10,0) circle (4pt) node[below] { };
        \fill[] (12,0) circle (4pt) node[below] { };
        \fill[] (14, 0) circle (4pt) ;
        \fill[] (16, 0) circle (4pt) node[below] {$b^8$};

        \fill[] (1,2) circle (4pt) node[below] { };
        \fill[] (3,2) circle (4pt) node[below] { };
        \fill[] (5,2) circle (4pt) node[below] { };
        \fill[] (7,2) circle (4pt) node[below] { };
        \fill[] (9,2) circle (4pt) node[below] { };
        \fill[] (11,2) circle (4pt) node[below] { };
        \fill[] (13,2) circle (4pt) node[below] { };
        \fill[] (15,2) circle (4pt) node[below] { };

        \fill[] (2,4) circle (4pt) node[below] { };
        \fill[] (4,4) circle (4pt) node[below] { };
        \fill[] (6,4) circle (4pt) node[below] { };
        \fill[red] (8,4) circle (6pt) node[below] { };
        \fill[] (10,4) circle (4pt) node[below] { };
        \fill[] (12,4) circle (4pt) node[below] { };
        \fill[] (14,4) circle (4pt) node[below] { };
 
        \fill[] (3,6) circle (4pt) node[below] { };
        \fill[] (5,6) circle (4pt) node[below] { };
        \fill[red] (7,6) circle (6pt) node[below] { };
        \fill[red] (9,6) circle ( 6pt) node[below] { };
        \fill[] (11,6) circle (4pt) node[below] { }; 
        \fill[] (13,6) circle (4pt) node[below] { };

        \fill[] (4,8) circle (4pt) node[below] { };
        \fill[] (6,8) circle (4pt) node[below] { };
        \fill[] (8,8) circle (4pt) node[below] { };
        \fill[] (10,8) circle (4pt) node[below] { }; 
        \fill[] (12,8) circle (4pt) node[below] { }; 

        \fill[] (5,10) circle (4pt) node[below] { };
        \fill[] (7,10) circle (4pt) node[below] { };
        \fill[] (9,10) circle (4pt) node[below] { }; 
        \fill[] (11,10) circle (4pt) node[below] { }; 

        \fill[] (6,12) circle (4pt) node[below] { }; 
        \fill[] (8,12) circle (4pt) node[below] { }; 
        \fill[] (10,12) circle (4pt) node[below] { }; 

        \fill[] (7,14) circle (4pt) node[above] {};
        \fill[] (9,14) circle (4pt) node[above] {};

        \fill[] (8,16) circle (4pt) node[above] {$c^8$};
 
    \end{tikzpicture}

    \columnbreak 

    \begin{tikzpicture}[scale=0.3][line join = round, line cap = round]
        \fill[rose]   (0,0) -- (3,6) -- (7,6) -- (10,0) -- (0,0);
        \fill[sand]    (10,0) -- (8,4) -- (11,10) -- (16, 0) -- (10,0);
        \fill[cyan]     (3,6) -- (9,6) -- (11,10) -- (8,16) -- (3,6);

        \draw[-, thick] (0,0) -- (16, 0) -- (8,16) -- (0,0);

        \draw[-, thick] (8,0) -- (5,6);
        \draw[-, thick] (6,0)--(3,6) -- (7,6) -- (10,0);
        \draw[-, thick] (4,0)--(2,4) -- (8,4);
        \draw[-, thick] (2,0) -- (1,2) -- (9,2);
 
        \draw[-, thick] (8,4) -- (11,10);
        \draw[-, thick] (15,2) -- (14,0) -- (10,8);
        \draw[-, thick] (14,4) -- (12,0) -- (9,6);
        \draw[-, thick] (10,0) -- (13,6) ;
        \draw[-, thick] (9,2) -- (12,8) ; 
        
        \draw[-, thick] (7,6) -- (9,6);
        \draw[-, thick] (7,14) -- (9,14) -- (5,6);
        \draw[-, thick] (6,12) -- (10,12) -- (7,6);
        \draw[-, thick] (5,10) -- (11,10) ;
        \draw[-, thick] (4,8) -- (10,8) ;
 
        \fill[] (0,0) circle (4pt) node[below] {$a^8$};
        \fill[] (2,0) circle (4pt) node[below] { };
        \fill[] (4,0) circle (4pt) node[below] { };
        \fill[] (6,0) circle (4pt) node[below] { };
        \fill[] (8,0) circle (4pt) node[below] { };
        \fill[] (10,0) circle (4pt) node[below] { };
        \fill[] (12,0) circle (4pt) node[below] { };
        \fill[] (14, 0) circle (4pt) ;
        \fill[] (16, 0) circle (4pt) node[below] {$b^8$};

        \fill[] (1,2) circle (4pt) node[below] { };
        \fill[] (3,2) circle (4pt) node[below] { };
        \fill[] (5,2) circle (4pt) node[below] { };
        \fill[] (7,2) circle (4pt) node[below] { };
        \fill[] (9,2) circle (4pt) node[below] { };
        \fill[] (11,2) circle (4pt) node[below] { };
        \fill[] (13,2) circle (4pt) node[below] { };
        \fill[] (15,2) circle (4pt) node[below] { };

        \fill[] (2,4) circle (4pt) node[below] { };
        \fill[] (4,4) circle (4pt) node[below] { };
        \fill[] (6,4) circle (4pt) node[below] { }; 
        \fill[] (10,4) circle (4pt) node[below] { };
        \fill[] (12,4) circle (4pt) node[below] { };
        \fill[] (14,4) circle (4pt) node[below] { };
 
        \fill[] (3,6) circle (4pt) node[below] { };
        \fill[] (5,6) circle (4pt) node[below] { };
        \fill[] (11,6) circle (4pt) node[below] { }; 
        \fill[] (13,6) circle (4pt) node[below] { };

        \fill[] (4,8) circle (4pt) node[below] { };
        \fill[] (6,8) circle (4pt) node[below] { };
        \fill[] (8,8) circle (4pt) node[below] { };
        \fill[] (10,8) circle (4pt) node[below] { }; 
        \fill[] (12,8) circle (4pt) node[below] { }; 

        \fill[] (5,10) circle (4pt) node[below] { };
        \fill[] (7,10) circle (4pt) node[below] { };
        \fill[] (9,10) circle (4pt) node[below] { }; 
        \fill[] (11,10) circle (4pt) node[below] { }; 

        \fill[] (6,12) circle (4pt) node[below] { }; 
        \fill[] (8,12) circle (4pt) node[below] { }; 
        \fill[] (10,12) circle (4pt) node[below] { }; 

        \fill[] (7,14) circle (4pt) node[above] {};
        \fill[] (9,14) circle (4pt) node[above] {};

        \fill[] (8,16) circle (4pt) node[above] {$c^8$};

        \fill[light-gray] (7,2) -- (9,2) -- (11,6) -- (10,8) -- (9,8) -- (6,8) -- (5,6)--(7,2); 
        
        \draw[-, thick] (7,2) -- (9,2) -- (11,6) -- (10,8) -- (9,8) -- (6,8) -- (5,6)--(7,2) ;

    \end{tikzpicture}
    
\end{multicols}
    
\caption{A polytopal complex supporting the minimal resolution of $(a,b,c)^{8}\smallsetminus \mathcal{S}_{8,a^{3}b^{3}c^{3}}$.}
    \label{f:7.2}
\end{figure}


Question \ref{q:BoxResolutions} is well understood in three variables, where every monomial ideal is known to have a polytopal resolution (\cite{Miller}). The situation is considerably wilder in four or more variables; it's not obvious that polytopal resolutions are the right target.

We refine Question \ref{q:BoxResolutions} by making the following definitions:
\begin{definition}
    Let $I$ be an equigenerated monomial ideal.  We say that a minimal resolution of $I$ is:
    \begin{itemize}
    \item A \emph{box resolution} if it is supported on a polytopal complex in which every cell is an admissible box, in the sense of Definition \ref{d:boxlabels}.
    \item A \emph{cyclic box resolution} if it is supported on a polytopal complex in which every cell is a box of the form $\Gamma_{i}(m)$, for some monomial $m$ and some cyclic ordering $Q_{i}$.
    \item A \emph{box-plus resolution} if it is supported on a polytopal complex in which every cell is either some $\Gamma_{i}(m)$ or a copy of $\widehat{Y}$.
    \end{itemize}
\end{definition}

\begin{example}  The complex of boxes is a cyclic box resolution, for any Borel ideal.

All the resolutions of $\dfrac{S}{I}$ constructed earlier in the paper are cyclic box resolutions.  The resolutions of $\dfrac{S}{\widehat{I}}$ in Figures \ref{f:YHat}, \ref{f:3pinched}, \ref{f:4pinched}, \ref{f:bcd-bcd}, and \ref{f:abcd3-bcd} are all box-plus resolutions, and will become cyclic box resolutions if we replace their copy of $\widehat{Y}$ with $Y$.

Another example of a box-plus resolution (in three variables) appears in Figure \ref{fig:box-plus}.
    \begin{figure}[hbt]
        \centering

        \begin{multicols}{2}

        \begin{tikzpicture}[scale=0.65]
        \tkzDefPoint(-4,0){1}
        \tkzDefPoint(-3,0){2}
        \tkzDefPoint(-2,0){3}
        \tkzDefPoint(0,0){4}
        \tkzDefPoint(1,0){5}
        \tkzDefPoint(3,0){6}
        \tkzDefPoint(4,0){7}

        \tkzDefPoint(-3.5,1){8}
        \tkzDefPoint(-1.5,1){9}
        \tkzDefPoint(-0.5,1){10}
        \tkzDefPoint(1.5,1){11}
        \tkzDefPoint(2.5,1){12}

        \tkzDefPoint(-3,2){13}
        \tkzDefPoint(-2,2){14}
        \tkzDefPoint(0,2){15}
        \tkzDefPoint(1,2){16}
        \tkzDefPoint(3,2){17} 

        \tkzDefPoint(-1.5,3){18}
        \tkzDefPoint(-0.5,3){19}
        \tkzDefPoint(1.5,3){20}
        \tkzDefPoint(2.5,3){21}

        \tkzDefPoint(-2,4){22}
        \tkzDefPoint(0,4){23}
        \tkzDefPoint(1,4){24} 

        \tkzDefPoint(-1.5,5){25}
        \tkzDefPoint(-0.5,5){26}
        \tkzDefPoint(1.5,5){27}

        \tkzDefPoint(0,6){28}
        \tkzDefPoint(1,6){29} 
        
        \tkzDefPoint(-0.5,7){30}

        \tkzDefPoint(0,8){31}

        \fill[light-gray] (1) -- (7) -- (31) -- (1);
        \fill[cyan] (1) -- (2) -- (8)--(1); 

        \draw [thick] (1) -- (2) -- (8)--(1); 
        \draw [thick] (1) -- (7) -- (31) -- (1);
        \draw [thick] (13) -- (14) -- (9) -- (3);
        \draw [thick] (9) -- (10) -- (4);
        \draw [thick] (10) -- (15) -- (16)--(11) -- (5);
        \draw [thick] (11) -- (12) -- (6);
        \draw [thick] (17) -- (12) ;
        \draw [thick] (22) -- (18) --(14) ;
        \draw [thick] (18) -- (19) --(15) ;
        \draw [thick] (16) -- (20) --(21) ;
        \draw [thick] (25) -- (26) --(23) -- (19) ;
        \draw [thick] (23) -- (24) --(20) ;
        \draw [thick] (27) -- (24) ;
        \draw [thick] (30) -- (28) -- (26);
        \draw [thick] (28) -- (29);

        \tkzDrawPoints[size=3](1,2,3,4,5,6,7,8,9,10,11,12,13,14,15,16,17,18,19,20,21,22,23,24,25,26,27,28,29,30,31)  

        \node[above] at (0,8)   {$c^8$};
        \node[below] at (-4,0)   {$a^8$};
        \node[below] at (4,0)   {$b^8$};
        
        \end{tikzpicture} 

        \columnbreak

        \begin{tikzpicture}[scale=0.65]
        \tkzDefPoint(-4,0){1}
        \tkzDefPoint(-3,0){2}
        \tkzDefPoint(-2,0){3}
        \tkzDefPoint(-1,0){32}
        \tkzDefPoint(0,0){4}
        \tkzDefPoint(1,0){5}
        \tkzDefPoint(2,0){33}
        \tkzDefPoint(3,0){6}
        \tkzDefPoint(4,0){7}

        \tkzDefPoint(-3.5,1){8}
        \tkzDefPoint(-2.5,1){34}
        \tkzDefPoint(-1.5,1){9}
        \tkzDefPoint(-0.5,1){10}
        \tkzDefPoint(0.5,1){35}
        \tkzDefPoint(1.5,1){11}
        \tkzDefPoint(2.5,1){12} 
        \tkzDefPoint(3.5,1){36}

        \tkzDefPoint(-3,2){13}
        \tkzDefPoint(-2,2){14}
        \tkzDefPoint(0,2){15}
        \tkzDefPoint(1,2){16}
        \tkzDefPoint(2,2){37}
        \tkzDefPoint(3,2){17} 

        \tkzDefPoint(-2.5,3){38}
        \tkzDefPoint(-1.5,3){18}
        \tkzDefPoint(-0.5,3){19}
        \tkzDefPoint(1.5,3){20}
        \tkzDefPoint(2.5,3){21}

        \tkzDefPoint(-2,4){22}
        \tkzDefPoint(0,4){23}
        \tkzDefPoint(1,4){24} 
        \tkzDefPoint(2,4){42} 

        \tkzDefPoint(-1.5,5){25}
        \tkzDefPoint(-0.5,5){26}
        \tkzDefPoint(0.5,5){39}
        \tkzDefPoint(1.5,5){27}

        \tkzDefPoint(-1,6){40}
        \tkzDefPoint(0,6){28}
        \tkzDefPoint(1,6){29} 
        
        \tkzDefPoint(-0.5,7){30}
        \tkzDefPoint(0.5,7){41}

        \tkzDefPoint(0,8){31}

        \fill[cyan] (1) -- (7) -- (31)--(1); 
        \fill[light-gray] (9) -- (10) -- (15) -- (16)--(20)--(24)--(23) -- (26) -- (25) -- (22) -- (18) -- (14) -- (9);
        \fill[rose] (34)--(9)--(14)--(34);
        \fill[rose] (35)--(12)--(37)--(11)--(16)--(35);

        \draw [thick] (9) -- (10) -- (15) -- (16)--(20)--(24)--(23) -- (26) -- (25) -- (22) -- (18) -- (14) -- (9); 
        \draw [thick] (19) -- (23);
        \draw [thick] (19) -- (18);
        \draw [thick] (19) -- (15);
        \draw [thick] (1) -- (7) -- (31)--(1);
        \draw [thick] (8) -- (2) -- (34)--(13);
        \draw [thick] (34) --  (9) -- (3);
        \draw [thick] (9) --  (10) -- (32);
        \draw [thick] (4) --  (10) -- (35) -- (5);
        \draw [thick] (35) --  (11) -- (33) ;
        \draw [thick] (11) --  (12) -- (6) ;
        \draw [thick] (12) -- (36) ;
        \draw [thick] (38) -- (14) -- (34);
        \draw [thick] (16) -- (35);
        \draw [thick] (20) -- (37)--(11)--(16);
        \draw [thick] (12) -- (37)--(17);
        \draw [thick] (20) -- (21);
        \draw [thick] (39)--(24) -- (42);
        \draw [thick] (26) -- (27);
        \draw [thick] (30)--(28)--(26) -- (40);
        \draw [thick] (28)--(29) -- (39);
        \draw [thick] (28)--(41);

        \tkzDrawPoints[size=3](1,2,3,4,5,6,7,8,9,10,11,12,13,14,15,16,17,18,19,20,21,22,23,24,25,26,27,28,29,30,31,32,33,34,35,36,37,38,39,40,41,42)

        \node[above] at (0,8)   {$c^8$};
        \node[below] at (-4,0)   {$a^8$};
        \node[below] at (4,0)   {$b^8$};
        
        \end{tikzpicture} 
            
        \end{multicols} 
        \caption{The complex on the left supports a box-plus resolution.  The complex on the right does not, because the red triangles are admissible boxes but not equal to $\Gamma_{i}(m)$ for any $i$ and $m$.}
        \label{fig:box-plus}
    \end{figure}
    
    \end{example}

    \begin{example}\label{e:not-box-plus}
    Let $\widehat{I}$ be the ideal of $k[a,b,c]$ generated by all degree-five monomials except $a^{3}bc$, $ab^{3}c$, and $abc^{3}$.  Then $\widehat{I}$ does not admit a box-plus resolution:  We would need the hexagons around the three omitted generators, as in figure \ref{f:notBoxPlus}, creating the isolated triangular region in the center, which is too small to be tiled by any $\Gamma_{i}$.  (In fact, because this triangle has negative orientation, it is not a box and cannot be tiled by boxes.)  However, the complex in this picture does support a non-minimal resolution of $\dfrac{S}{{I}}$.  Deleting any one of the three sides of this triangle yields a polytopal minimal resolution that is not box-plus.
\end{example}

\begin{figure}[hbt]\label{f:notBoxPlus}
        \centering

        \begin{multicols}{2}

        \begin{tikzpicture}[scale=1]
        \tkzDefPoint(-2.5,0){a^5}
        \tkzDefPoint(-1.5,0){a^4b}
        \tkzDefPoint(-0.5,0){a^3b^2}
        \tkzDefPoint(0.5,0){a^2b^3}
        \tkzDefPoint(1.5,0){ab^4}
        \tkzDefPoint(2.5,0){b^5}

        \tkzDefPoint(-2,1){a^4c} 
        \tkzDefPoint(0,1){a^2b^2c} 
        \tkzDefPoint(2,1){b^4c}

        \tkzDefPoint(-1.5,2){a^3c^2} 
        \tkzDefPoint(-0.5,2){a^2bc^2} 
        \tkzDefPoint(0.5,2){ab^2c^2}
        \tkzDefPoint(1.5,2){b^3c^2} 

        \tkzDefPoint(-1,3){a^2c^3}
        \tkzDefPoint(1,3){b^2c^3}

        \tkzDefPoint(-0.5,4){ac^4}
        \tkzDefPoint(0.5,4){bc^4}

        \tkzDefPoint(0,5){c^5}

        \fill[cyan] (a^5) -- (b^5) -- (c^5)--(a^5);
        \fill[light-gray] (a^4c) -- (a^4b) -- (a^3b^2) -- (a^2b^2c) -- (a^2bc^2)--(a^3c^2)--(a^4c);
        \fill[light-gray] (a^2b^2c)--(a^2b^3)--(ab^4)--(b^4c)--(b^3c^2)--(ab^2c^2)--(a^2b^2c);

        \fill[light-gray] (a^2bc^2)--(ab^2c^2)--(b^2c^3)--(bc^4)--(ac^4)--(a^2c^3)--(a^2bc^2);

        \fill[rose] (a^2bc^2) -- (a^2b^2c) -- (ab^2c^2);

        \draw [thick] (a^5) -- (b^5) -- (c^5) -- (a^5);
        \draw [thick] (ac^4) -- (bc^4);
        \draw [thick] (a^2c^3) -- (a^2b^3);
        \draw [thick] (a^3c^2) -- (b^3c^2);
        \draw [thick] (a^3b^2) -- (b^2c^3);
        \draw [thick] (a^4c) -- (a^4b);
        \draw [thick] (b^4c) -- (ab^4);

        \tkzDrawPoints[size=3](a^5,a^4b,a^3b^2,a^2b^3,ab^4,b^5,a^4c,a^2b^2c,b^4c,a^3c^2,a^2bc^2,ab^2c^2,b^3c^2,a^2c^3,b^2c^3,ac^4,bc^4,c^5)

        \tkzLabelPoints[above](c^5) 
        \tkzLabelPoints[below](a^5,a^4b,a^3b^2,a^2b^3,ab^4,b^5)
        \tkzLabelPoints[left](a^4c,a^3c^2,a^2c^3,ac^4)
        \tkzLabelPoints[right](b^4c,b^3c^2,b^2c^3,bc^4,a^2b^2c)
        \end{tikzpicture}

        \columnbreak

        \begin{tikzpicture}[scale=1]
        \tkzDefPoint(-2.5,0){a^5}
        \tkzDefPoint(-1.5,0){a^4b}
        \tkzDefPoint(-0.5,0){a^3b^2}
        \tkzDefPoint(0.5,0){a^2b^3}
        \tkzDefPoint(1.5,0){ab^4}
        \tkzDefPoint(2.5,0){b^5}

        \tkzDefPoint(-2,1){a^4c} 
        \tkzDefPoint(0,1){a^2b^2c} 
        \tkzDefPoint(2,1){b^4c}

        \tkzDefPoint(-1.5,2){a^3c^2} 
        \tkzDefPoint(-0.5,2){a^2bc^2} 
        \tkzDefPoint(0.5,2){ab^2c^2}
        \tkzDefPoint(1.5,2){b^3c^2} 

        \tkzDefPoint(-1,3){a^2c^3}
        \tkzDefPoint(1,3){b^2c^3}

        \tkzDefPoint(-0.5,4){ac^4}
        \tkzDefPoint(0.5,4){bc^4}

        \tkzDefPoint(0,5){c^5}

        \fill[cyan] (a^5) -- (b^5) -- (c^5)--(a^5);
        \fill[light-gray] (a^4c) -- (a^4b) -- (a^3b^2) -- (a^2b^2c) -- (a^2bc^2)--(a^3c^2)--(a^4c);
        \fill[light-gray] (a^2b^2c)--(a^2b^3)--(ab^4)--(b^4c)--(b^3c^2)--(ab^2c^2)--(a^2b^2c);

        \fill[light-gray] (a^2bc^2)--(ab^2c^2)--(b^2c^3)--(bc^4)--(ac^4)--(a^2c^3)--(a^2bc^2);

        \fill[green] (ac^4) -- (bc^4) -- (b^2c^3) -- (a^2b^2c) -- (a^2c^3);

        \draw [thick] (a^5) -- (b^5) -- (c^5) -- (a^5);
        \draw [thick] (ac^4) -- (bc^4);
        \draw [thick] (a^2c^3) -- (a^2b^3);
        \draw [thick] (a^3c^2) -- (a^2bc^2);
        \draw [thick] (ab^2c^2) -- (b^3c^2);
        \draw [thick] (a^3b^2) -- (b^2c^3);
        \draw [thick] (a^4c) -- (a^4b);
        \draw [thick] (b^4c) -- (ab^4);

        \tkzDrawPoints[size=3](a^5,a^4b,a^3b^2,a^2b^3,ab^4,b^5,a^4c,a^2b^2c,b^4c,a^3c^2,a^2bc^2,ab^2c^2,b^3c^2,a^2c^3,b^2c^3,ac^4,bc^4,c^5)

        \tkzLabelPoints[above](c^5) 
        \tkzLabelPoints[below](a^5,a^4b,a^3b^2,a^2b^3,ab^4,b^5)
        \tkzLabelPoints[left](a^4c,a^3c^2,a^2c^3,ac^4)
        \tkzLabelPoints[right](b^4c,b^3c^2,b^2c^3,bc^4,a^2b^2c)
        \end{tikzpicture}
            
        \end{multicols}

\caption{Let $\widehat{I}$ be the ideal generated by all degree five monomials except $a^{3}bc$, $ab^{3}c$,and $abc^{3}$.  The complex on the left supports a non-minimal resolution of $\widehat{I}$.  The complex on the right supports a minimal resolution, but is not box-plus.}
\end{figure}

It is natural to ask which monomial ideals support which type of resolution.  In particular, the difference between a box resolution and a cyclic box resolution seems important.  For example, if $n=4$ and the generating degree is large, then almost every cell of the Eliahou-Kervaire resolution has the form $\{f\}\times\{a,d\}\times \{b,d\}\times \{c,d\}$ for some $f$.  These cells are admissible boxes, but are not equal to any $\Gamma_{i}(m)$.

On the other hand, we can make some substantial statements about cyclic box resolutions and box-plus resolutions by rephrasing these objects in terms of staircase diagrams, as we do below.

Staircase diagrams have a long history in the study of monomial ideals; see for example \cite[Chapters 3,4,6]{miller2004combinatorial} or \cite{Bayer1996duality}.  Normally these are studied as a Stanley decomposition adjacent to the Newton polytope, (so that corners correspond to generators or syzygies,) but we will use them somewhat differently to parametrize resolutions.

\begin{remark}
    We note that the staircase diagrams discussed below feel similar to the tropical pictures found in \cite{DochtermannJoswigSanyal}.  While we are far from experts in tropical geometry, we hope that the subject might have something interesting to say about these questions.
\end{remark}

\begin{definition}\label{d:staircase}
    A \emph{staircase diagram} (see Figure \ref{f:staircase-example} for an example of a staircase diagram) in $\mathbb{R}^{n}$ is a nonempty infinite $n$-dimensional cubical complex $\mathcal{S}\subset \mathbb{R}^{n}$ satisfying:
    \begin{enumerate}
        \item Every vertex is a lattice point.
        \item Every vertex is the  outermost corner of a $d$-dimensional cube.  More precisely, if $\mathbf{v}=(a_{1},\dots, a_{n})$ is a vertex of $\mathcal{S}$, then the hypercube $[a_{1}-1,a_{1}]\times [a_{2}-1,a_{2}]\times \dots \times [a_{n}-1,a_{n}]$ is a facet of $\mathcal{S}$. 
        \end{enumerate}

    The \emph{visible surface} of $\mathcal{S}$ is the $(n-1)$-dimensional cubical complex consisting of the faces of $\mathcal{S}$ visible from infinitely far into the first orthant in the direction $(1,1,1,\dots, 1)$.
\end{definition}

\begin{figure}[h!]
    \centering

    \begin{tikzpicture}[scale=0.5]
        \tkzDefPoint(0,0){a^4}
        \tkzDefPoint(2,-1){a^3b}
        \tkzDefPoint(4,-2){a^2b^2}
        \tkzDefPoint(6,-3){ab^3}
        \tkzDefPoint(8,-4){b^4}
        
        \tkzDefPoint(2,1){a^3c}
        \tkzDefPoint(4,0){a^2bc}
        \tkzDefPoint(6,-1){ab^2c}
        \tkzDefPoint(8,-2){b^3c}
        
        \tkzDefPoint(4,2){a^2c^2}
        \tkzDefPoint(6,1){abc^2}
        \tkzDefPoint(8,0){b^2c^2}
        
        \tkzDefPoint(6,3){ac^3}
        \tkzDefPoint(8,2){bc^3}
        
        \tkzDefPoint(8,4){c^4}

        \tkzDefPoint(10,-3){1}
        \tkzDefPoint(12,-2){2}
        \tkzDefPoint(14,-1){3}
        \tkzDefPoint(16,0){4}
        \tkzDefPoint(0,2){5}
        \tkzDefPoint(10,-1){6}
        \tkzDefPoint(12,0){7}
        \tkzDefPoint(10,1){x}
        
        \tkzDefPoint(14,1){8}
        \tkzDefPoint(16,2){9}
        \tkzDefPoint(0,4){10}
        \tkzDefPoint(2,3){11}
        \tkzDefPoint(12,2){12}
        \tkzDefPoint(14,3){13}
        \tkzDefPoint(16,4){14}

        \tkzDefPoint(0,6){15}
        \tkzDefPoint(2,5){16}
        \tkzDefPoint(4,4){17}
        \tkzDefPoint(10,3){18}
        \tkzDefPoint(12,4){19}
        \tkzDefPoint(14,5){20}
        \tkzDefPoint(16,6){21}

        \tkzDefPoint(0,8){22}
        \tkzDefPoint(2,7){23}
        \tkzDefPoint(4,6){24}
        \tkzDefPoint(6,5){25}
        \tkzDefPoint(10,5){26}
        \tkzDefPoint(12,6){27}
        \tkzDefPoint(14,7){28}
        \tkzDefPoint(16,8){29}

        \tkzDefPoint(2,9){30}
        \tkzDefPoint(4,8){31}
        \tkzDefPoint(6,7){32}
        \tkzDefPoint(8,6){33}
        \tkzDefPoint(10,7){34}
        \tkzDefPoint(12,8){35}
        \tkzDefPoint(14,9){36}

        \tkzDefPoint(4,10){37}
        \tkzDefPoint(6,9){38}
        \tkzDefPoint(8,8){39}
        \tkzDefPoint(10,9){40}
        \tkzDefPoint(12,10){41}

        \tkzDefPoint(6,11){42}
        \tkzDefPoint(8,10){43}
        \tkzDefPoint(10,11){44}

        \tkzDefPoint(8,12){45}

        \fill[cyan] (a^4) -- (b^4) -- (4) -- (29) -- (45) -- (22) -- (a^4);
        \fill[light-light-gray] (a^3c) -- (a^2c^2) -- (b^2c^2) -- (12) -- (8) -- (b^3c) -- (a^3c);
        \fill[light-light-gray] (10) -- (24) -- (c^4) -- (26) -- (19) -- (20) -- (14) -- (13) -- (19) -- (bc^3) -- (17) -- (11) -- (10);

        \fill[light-light-gray] (31) -- (43) -- (28) -- (27) -- (34) -- (33) -- (31);

        \draw [thick] (a^4) -- (b^4) -- (4) -- (29) -- (45) -- (22) -- (a^4);
        \draw [thick] (5) -- (b^3c) -- (9);
        \draw [thick] (10) -- (11);
        \draw [thick] (a^2c^2) -- (6);
        \draw [thick] (16) -- (bc^3);
        \draw [thick] (x) -- (7);
        \draw [thick] (24) -- (18);
        \draw [thick] (12) -- (8);
        \draw [thick] (31) -- (33);

        \draw [thick] (26) -- (13);
        \draw [thick] (38) -- (27);
        \draw [thick] (20) -- (14);
        \draw [thick] (15) -- (43) -- (21);

        \draw [thick] (ab^2c) -- (12);
        \draw [thick] (13) -- (14);
        \draw [thick] (a^2bc) -- (abc^2);
        \draw [thick] (bc^3) -- (20);

        \draw [thick] (a^3c) -- (a^2c^2);
        \draw [thick] (ac^3) -- (26);
        \draw [thick] (27) -- (28);
        \draw [thick] (11) -- (25);

        \draw [thick] (33) -- (35);
        \draw [thick] (10) -- (24);
        \draw [thick] (32) -- (40);

        \draw [thick] (30) -- (16);
        \draw [thick] (11) -- (a^3b);
        \draw [thick] (37) -- (24);
        \draw [thick] (17) -- (a^2c^2);
        \draw [thick] (a^2bc) -- (a^2b^2);
        
        \draw [thick] (42) -- (38);
        \draw [thick] (32) -- (25);
        \draw [thick] (ac^3) -- (abc^2);

        \draw [thick] (ab^2c) -- (ab^3);

        \draw [thick] (45) -- (43);
        \draw [thick] (33) -- (c^4);

        \draw [thick] (bc^3) -- (b^2c^2);
        \draw [thick] (b^3c) -- (b^4);

        \draw [thick] (44) -- (40);

        \draw [thick] (34) -- (26);

        \draw [thick] (18) -- (x);
        \draw [thick] (6) -- (1);

        \draw [thick] (41) -- (35);
        \draw [thick] (27) -- (12);
        \draw [thick] (7) -- (2);

        \draw [thick] (36) -- (20);

        \draw [thick] (13) -- (3); 

        \tkzDrawPoints[size=3](a^4,a^3b,a^2b^2,ab^3,b^4,a^3c,a^2bc,ab^2c,b^3c,a^2c^2,abc^2,b^2c^2,ac^3,bc^3,c^4,1,2,3,4,5,6,7,8,9,10,11,12,13,14,15,16,17,18,19,20,21,22,23,24,25,26,27,28,29,30,31,32,33,34,35,36,37,38,39,40,41,42,43,44,45,x)        
        
        \node[left] at (0,0)   {(4,0,0)};
        \node[above] at (8,12) {(0,0,4)}; 
        \node[right] at (16,0) {(0,4,0)};

        \node[left] at (0,8)   {(4,0,4)};
        \node[below] at (8,-4) {(4,4,0)};        
        \node[right] at (16,8) {(0,4,4)};
        \end{tikzpicture}
    
    \caption{An example of a staircase diagram.}
    \label{f:staircase-example}
\end{figure}

\begin{construction}\label{c:staircaselabel}

        Fix a degree $d$ monomial $m$ and a staircase diagram $\mathcal{S}$.  To the lattice point $(a_{1},a_{2},\dots, a_{n})$, we associate the monomial
        \begin{align*}
             m\cdot(x_{1}^{a_{1}-a_{2}} x_{2}^{a_{2}-a_{3}} \dots x_{n}^{a_{n}-a_{1}})= m\left(\frac{x_{1}}{x_{n}}\right)^{a_{1}}\left(\frac{x_{2}}{x_{1}}\right)^{a_{2}}\cdots \left(\frac{x_{n}}{x_{n-1}}\right)^{a_{n}} 
        \end{align*} 
        in $k[x_{1},\dots,x_{n},x_{1}^{-1},\dots,x_{n}^{-1}]$. (Note that this means the origin is associated to $m$.)  Label every vertex of the visible surface of $\mathcal{S}$ by the associated monomial.  We refer to the resulting labeled staircase diagram as $\mathcal{S}_{m}$.

\end{construction}

We illustrate Construction \ref{c:staircaselabel} via the following example.

\begin{example} 
Consider the staircase diagram given in Figure \ref{f:staircase-example}, and set $m=c^{4}\in k[a,b,c]$.
Then $(2,2,2)$ is labeled with $m=m\cdot a^{0}b^{0}c^{0}$. Meanwhile, $(3,2,2)$ is labeled with $c^4 \cdot (a^{1}b^0c^{-1})=ac^3$.  Similarly, we can obtain labels of all lattice points (see Figure \ref{f:staircase} for labels of some lattice points of the staircase diagram).

\begin{tiny}
    
\end{tiny}

\begin{figure}[hbt]
    \centering

    \begin{tikzpicture}[scale=0.5]
        \tkzDefPoint(0,0){a^4}
        \tkzDefPoint(2,-1){a^3b}
        \tkzDefPoint(4,-2){a^2b^2}
        \tkzDefPoint(6,-3){ab^3}
        \tkzDefPoint(8,-4){b^4}
        
        \tkzDefPoint(2,1){a^3c}
        \tkzDefPoint(4,0){a^2bc}
        \tkzDefPoint(6,-1){ab^2c}
        \tkzDefPoint(8,-2){b^3c}
        
        \tkzDefPoint(4,2){a^2c^2}
        \tkzDefPoint(6,1){abc^2}
        \tkzDefPoint(8,0){b^2c^2}
        
        \tkzDefPoint(6,3){ac^3}
        \tkzDefPoint(8,2){bc^3}
        
        \tkzDefPoint(8,4){c^4}

        \tkzDefPoint(10,-3){1}
        \tkzDefPoint(12,-2){2}
        \tkzDefPoint(14,-1){3}
        \tkzDefPoint(16,0){4}
        \tkzDefPoint(0,2){5}
        \tkzDefPoint(10,-1){6}
        \tkzDefPoint(12,0){7}
        \tkzDefPoint(10,1){x}
        
        \tkzDefPoint(14,1){8}
        \tkzDefPoint(16,2){9}
        \tkzDefPoint(0,4){10}
        \tkzDefPoint(2,3){11}
        \tkzDefPoint(12,2){12}
        \tkzDefPoint(14,3){13}
        \tkzDefPoint(16,4){14}

        \tkzDefPoint(0,6){15}
        \tkzDefPoint(2,5){16}
        \tkzDefPoint(4,4){17}
        \tkzDefPoint(10,3){18}
        \tkzDefPoint(12,4){19}
        \tkzDefPoint(14,5){20}
        \tkzDefPoint(16,6){21}

        \tkzDefPoint(0,8){22}
        \tkzDefPoint(2,7){23}
        \tkzDefPoint(4,6){24}
        \tkzDefPoint(6,5){25}
        \tkzDefPoint(10,5){26}
        \tkzDefPoint(12,6){27}
        \tkzDefPoint(14,7){28}
        \tkzDefPoint(16,8){29}

        \tkzDefPoint(2,9){30}
        \tkzDefPoint(4,8){31}
        \tkzDefPoint(6,7){32}
        \tkzDefPoint(8,6){33}
        \tkzDefPoint(10,7){34}
        \tkzDefPoint(12,8){35}
        \tkzDefPoint(14,9){36}

        \tkzDefPoint(4,10){37}
        \tkzDefPoint(6,9){38}
        \tkzDefPoint(8,8){39}
        \tkzDefPoint(10,9){40}
        \tkzDefPoint(12,10){41}

        \tkzDefPoint(6,11){42}
        \tkzDefPoint(8,10){43}
        \tkzDefPoint(10,11){44}

        \tkzDefPoint(8,12){45}

        \fill[cyan] (a^4) -- (b^4) -- (4) -- (29) -- (45) -- (22) -- (a^4);
        \fill[light-light-gray] (a^3c) -- (a^2c^2) -- (b^2c^2) -- (12) -- (8) -- (b^3c) -- (a^3c);
        \fill[light-light-gray] (10) -- (24) -- (c^4) -- (26) -- (19) -- (20) -- (14) -- (13) -- (19) -- (bc^3) -- (17) -- (11) -- (10);

        \fill[light-light-gray] (31) -- (43) -- (28) -- (27) -- (34) -- (33) -- (31);

        \draw [thick] (a^4) -- (b^4) -- (4) -- (29) -- (45) -- (22) -- (a^4);
        \draw [thick] (5) -- (b^3c) -- (9);
        \draw [thick] (10) -- (11);
        \draw [thick] (a^2c^2) -- (6);
        \draw [thick] (16) -- (bc^3);
        \draw [thick] (x) -- (7);
        \draw [thick] (24) -- (18);
        \draw [thick] (12) -- (8);
        \draw [thick] (31) -- (33);

        \draw [thick] (26) -- (13);
        \draw [thick] (38) -- (27);
        \draw [thick] (20) -- (14);
        \draw [thick] (15) -- (43) -- (21);

        \draw [thick] (ab^2c) -- (12);
        \draw [thick] (13) -- (14);
        \draw [thick] (a^2bc) -- (abc^2);
        \draw [thick] (bc^3) -- (20);

        \draw [thick] (a^3c) -- (a^2c^2);
        \draw [thick] (ac^3) -- (26);
        \draw [thick] (27) -- (28);
        \draw [thick] (11) -- (25);

        \draw [thick] (33) -- (35);
        \draw [thick] (10) -- (24);
        \draw [thick] (32) -- (40);

        \draw [thick] (30) -- (16);
        \draw [thick] (11) -- (a^3b);
        \draw [thick] (37) -- (24);
        \draw [thick] (17) -- (a^2c^2);
        \draw [thick] (a^2bc) -- (a^2b^2);
        
        \draw [thick] (42) -- (38);
        \draw [thick] (32) -- (25);
        \draw [thick] (ac^3) -- (abc^2);

        \draw [thick] (ab^2c) -- (ab^3);

        \draw [thick] (45) -- (43);
        \draw [thick] (33) -- (c^4);

        \draw [thick] (bc^3) -- (b^2c^2);
        \draw [thick] (b^3c) -- (b^4);

        \draw [thick] (44) -- (40);

        \draw [thick] (34) -- (26);

        \draw [thick] (18) -- (x);
        \draw [thick] (6) -- (1);

        \draw [thick] (41) -- (35);
        \draw [thick] (27) -- (12);
        \draw [thick] (7) -- (2);

        \draw [thick] (36) -- (20);

        \draw [thick] (13) -- (3); 

        \tkzDrawPoints[size=3](a^4,a^3b,a^2b^2,ab^3,b^4,a^3c,a^2bc,ab^2c,b^3c,a^2c^2,abc^2,b^2c^2,ac^3,bc^3,c^4,1,2,3,4,5,6,7,8,9,10,11,12,13,14,15,16,17,18,19,20,21,22,23,24,25,26,27,28,29,30,31,32,33,34,35,36,37,38,39,40,41,42,43,44,45,x)

        \tkzLabelPoints[right](a^3c)  
        \tkzLabelPoints[above](a^2bc,ab^2c,b^3c,bc^3,ac^3) 
        \tkzLabelPoints[below](a^4,a^3b,a^2b^2,ab^3,b^4,b^2c^2,abc^2,a^2c^2,c^4) 

        \node[below] at (12.1,-2.1)   {$a^{-2}b^4c^{2}$}; 
        \node[above] at (12,0.25)   {$a^{-2}b^3c^{3}$};
        \node[above] at (2,3.2)   {$a^{3}b^{-1}c^{2}$};

        \end{tikzpicture}
    
    \caption{An example of $\mathcal{S}_{m}$.}
    \label{f:staircase}
\end{figure}

\end{example}

\begin{remark}\label{r:labeledLine}
    Observe that for any lattice point $v$, the labels on $v$ and $v+(1,1,\dots, 1)$ are equal.  In fact, the labeling in Construction \ref{c:staircaselabel} provides a bijection among the set of lattice points on the visible surface of $\mathcal{S}$, the set of lattice points in $\mathbb{R}^{n}$ (modulo translation by $(1,1,\dots, 1)$),    
    and the set of degree $d$ monomials in the ring $k[x_{1},\dots,x_{n},x_{1}^{-1},\dots,x_{n}^{-1}].$  (Indeed, this bijection is given by orthogonal projection along the vector $(1,1,\dots, 1)$ onto the degree-$d$ hyperplane $\sum x_{i}=d$.)
\end{remark}

\begin{remark}\label{r:staircaseRecursion}
        The labeling of the visible surface of $\mathcal{S}_{m}$ can be achieved by the following inductive procedure:

        First, choose an arbitrary vertex $v$ on the visible surface of $\mathcal{S}$, and label $v$ as described in Construction \ref{c:staircaselabel}.  Then label the remaining vertices of the visible surface inductively as follows:
        
        If $w$ is already labeled by a monomial $m$, and $u$ is adjacent to $w$, then $u-w=\pm e_{j}$ for some $j$.  We label $u$ with $m\dfrac{x_{j}}{x_{j-1}}$ if the coefficient on $e_{j}$ is positive, and with $m\dfrac{x_{j-1}}{x_{j}}$ if the coefficient is negative.
        
\end{remark}

\begin{construction}\label{c:staircaseresolution}
    Fix a labeled staircase diagram $\mathcal{S}_{m}$, and let $T_{m}$ be its orthogonal projection onto the degree $d$ hyperplane $\mathcal{H}:=\{\mathbf{v}=(a_{1},\dots, a_{n}): a_{1}+\dots + a_{n}=d\}$.  Observe that $T_{m}$ is an (infinite) polytopal complex that tiles the hyperplane.
    
    Consider the convex hull $\mathcal{F}_{m}$ in $\mathcal{H}$ of all vertices whose labels are in $k[x_{1},\dots, x_{n}]$ (i.e., the vertices with no negative exponents), and view $\mathcal{F}_{m}$ as a simplex in the obvious way.  Make $\mathcal{F}_{m}$ into a polytopal complex by setting the $i$-dimensional faces equal to the $i$-dimensional intersections between faces of $\mathcal{F}_{m}$ and faces of $T_{m}$.
    \end{construction}

\begin{example}\label{e:staircaseresolution}
    In Figure \ref{f:staircaseresolution}, the simplex $\mathcal{H}$ is bounded by the red triangle.  Observe that the red line cuts across some diagonals of faces in $\mathcal{S}$, so $\mathcal{H}$ contains some triangular faces (e.g., $\{ab^{2}c, b^{3}c, b^{2}c^{2}\}$) in addition to the many square faces arising from the projection.

\begin{figure}[hbt]
    \centering

    \begin{tikzpicture}[scale=0.5]
        \tkzDefPoint(0,0){a^4}
        \tkzDefPoint(2,-1){a^3b}
        \tkzDefPoint(4,-2){a^2b^2}
        \tkzDefPoint(6,-3){ab^3}
        \tkzDefPoint(8,-4){b^4}
        
        \tkzDefPoint(2,1){a^3c}
        \tkzDefPoint(4,0){a^2bc}
        \tkzDefPoint(6,-1){ab^2c}
        \tkzDefPoint(8,-2){b^3c}
        
        \tkzDefPoint(4,2){a^2c^2}
        \tkzDefPoint(6,1){abc^2}
        \tkzDefPoint(8,0){b^2c^2}
        
        \tkzDefPoint(6,3){ac^3}
        \tkzDefPoint(8,2){bc^3}
        
        \tkzDefPoint(8,4){c^4}

        \tkzDefPoint(10,-3){1}
        \tkzDefPoint(12,-2){2}
        \tkzDefPoint(14,-1){3}
        \tkzDefPoint(16,0){4}
        \tkzDefPoint(0,2){5}
        \tkzDefPoint(10,-1){6}
        \tkzDefPoint(12,0){7}
        \tkzDefPoint(10,1){x}
        
        \tkzDefPoint(14,1){8}
        \tkzDefPoint(16,2){9}
        \tkzDefPoint(0,4){10}
        \tkzDefPoint(2,3){11}
        \tkzDefPoint(12,2){12}
        \tkzDefPoint(14,3){13}
        \tkzDefPoint(16,4){14}

        \tkzDefPoint(0,6){15}
        \tkzDefPoint(2,5){16}
        \tkzDefPoint(4,4){17}
        \tkzDefPoint(1
        0,3){18}
        \tkzDefPoint(12,4){19}
        \tkzDefPoint(14,5){20}
        \tkzDefPoint(16,6){21}

        \tkzDefPoint(0,8){22}
        \tkzDefPoint(2,7){23}
        \tkzDefPoint(4,6){24}
        \tkzDefPoint(6,5){25}
        \tkzDefPoint(10,5){26}
        \tkzDefPoint(12,6){27}
        \tkzDefPoint(14,7){28}
        \tkzDefPoint(16,8){29}

        \tkzDefPoint(2,9){30}
        \tkzDefPoint(4,8){31}
        \tkzDefPoint(6,7){32}
        \tkzDefPoint(8,6){33}
        \tkzDefPoint(10,7){34}
        \tkzDefPoint(12,8){35}
        \tkzDefPoint(14,9){36}

        \tkzDefPoint(4,10){37}
        \tkzDefPoint(6,9){38}
        \tkzDefPoint(8,8){39}
        \tkzDefPoint(10,9){40}
        \tkzDefPoint(12,10){41}

        \tkzDefPoint(6,11){42}
        \tkzDefPoint(8,10){43}
        \tkzDefPoint(10,11){44}

        \tkzDefPoint(8,12){45}

        \fill[cyan] (a^4) -- (b^4) -- (4) -- (29) -- (45) -- (22) -- (a^4);
        \fill[light-light-gray] (a^3c) -- (a^2c^2) -- (b^2c^2) -- (12) -- (8) -- (b^3c) -- (a^3c);
        \fill[light-light-gray] (10) -- (24) -- (c^4) -- (26) -- (19) -- (20) -- (14) -- (13) -- (19) -- (bc^3) -- (17) -- (11) -- (10);

        \fill[light-light-gray] (31) -- (43) -- (28) -- (27) -- (34) -- (33) -- (31);

        \draw [thick] (a^4) -- (b^4) -- (4) -- (29) -- (45) -- (22) -- (a^4);
        \draw [thick] (5) -- (b^3c) -- (9);
        \draw [thick] (10) -- (11);
        \draw [thick] (a^2c^2) -- (6);
        \draw [thick] (16) -- (bc^3);
        \draw [thick] (x) -- (7);
        \draw [thick] (24) -- (18);
        \draw [thick] (12) -- (8);
        \draw [thick] (31) -- (33);

        \draw [thick] (26) -- (13);
        \draw [thick] (38) -- (27);
        \draw [thick] (20) -- (14);
        \draw [thick] (15) -- (43) -- (21);

        \draw [thick] (ab^2c) -- (12);
        \draw [thick] (13) -- (14);
        \draw [thick] (a^2bc) -- (abc^2);
        \draw [thick] (bc^3) -- (20);

        \draw [thick] (a^3c) -- (a^2c^2);
        \draw [thick] (ac^3) -- (26);
        \draw [thick] (27) -- (28);
        \draw [thick] (11) -- (25);

        \draw [thick] (33) -- (35);
        \draw [thick] (10) -- (24);
        \draw [thick] (32) -- (40);

        \draw [thick] (30) -- (16);
        \draw [thick] (11) -- (a^3b);
        \draw [thick] (37) -- (24);
        \draw [thick] (17) -- (a^2c^2);
        \draw [thick] (a^2bc) -- (a^2b^2);
        
        \draw [thick] (42) -- (38);
        \draw [thick] (32) -- (25);
        \draw [thick] (ac^3) -- (abc^2);

        \draw [thick] (ab^2c) -- (ab^3);

        \draw [thick] (45) -- (43);
        \draw [thick] (33) -- (c^4);

        \draw [thick] (bc^3) -- (b^2c^2);
        \draw [thick] (b^3c) -- (b^4);

        \draw [thick] (44) -- (40);

        \draw [thick] (34) -- (26);

        \draw [thick] (18) -- (x);
        \draw [thick] (6) -- (1);

        \draw [thick] (41) -- (35);
        \draw [thick] (27) -- (12);
        \draw [thick] (7) -- (2);

        \draw [thick] (36) -- (20);

        \draw [thick] (13) -- (3);

        \draw [thick,red] (a^4) -- (b^4) -- (c^4) -- (a^4);

        \tkzDrawPoints[size=3.5](a^4,a^3b,a^2b^2,ab^3,b^4,a^3c,a^2bc,ab^2c,b^3c,a^2c^2,abc^2,b^2c^2,ac^3,bc^3,c^4,1,2,3,4,5,6,7,8,9,10,11,12,13,14,15,16,17,18,19,20,21,22,23,24,25,26,27,28,29,30,31,32,33,34,35,36,37,38,39,40,41,42,43,44,45,x) 

        \tkzLabelPoints[below](a^4,a^3b,a^2b^2,ab^3,b^4)  
        
        \node[left] at (2,2)   {$a^3c$}; 
        \node[above] at (4,0.1)   {$a^2bc$};
        \node[above] at (6,-0.9)   {$ab^2c$}; 
        \node[right] at (8,-2.1)   {$b^3c$};  

        \node[left] at (4,2.15)   {$a^2c^2$}; 
        \node[right] at (6,1.2)   {$abc^2$}; 
        \node[right] at (8.3,0)   {$b^2c^2$}; 

        \node[above] at (6,3)   {$ac^3$};
        \node[right] at (8,1.9)   {$bc^3$}; 

        \node[right] at (8.3,4.1)   {$c^4$};   
        \end{tikzpicture}
    
    \caption{An example of $T_m$.}
    \label{f:staircaseresolution}
\end{figure}
\end{example}

\begin{proposition}\label{p:staircasesMakeBoxes}
    Fix a staircase, a vertex, and a monomial.  Then Construction \ref{c:staircaseresolution} produces a cyclic box resolution.
\end{proposition}
\begin{proof}
    Fix a labeled staircase $\mathcal{S}_{m}$.  The cells arising from Construction \ref{c:staircaseresolution} are clearly boxes, but we need to verify that the facets have the form $\Gamma_{i}(\mu)$ for some $\mu$.  We also need to show that Construction \ref{c:staircaseresolution} in fact supports a resolution.
    
    We first show that every facet has the form $\Gamma_{i}(\mu)$.  Let a facet $F$ be given; then $F$ corresponds to an $n-1$-dimensional face of $\mathcal{S}$, which by abuse we also refer to as $F$.  Let $\mathbf{v}$ be the outermost vertex of $F$ (that is, the vertex maximizing the dot product $\mathbf{v}\bullet (1,1,\dots, 1)$), and observe that, without loss of generality $F$ is the intersection of $\mathcal{F}_{m}$ with the  convex hull of the $2^{n-1}$ points $v-\sum_{j\in\sigma}e_{j}$, the sum taken over all subsets $\sigma\subset \{2,\dots, n\}$.

    Let $\mathbf{v}$ be labeled by $\mu$, and observe that the vertex $\mathbf{v}-\sum_{j\in\sigma}e_{j}$ is labeled by $\mu\cdot \prod_{j\in\sigma}\frac{x_{j-1}}{x_{j}}$.  Thus the convex hull of the $\mathbf{v}-\sum_{j\in\sigma}e_{j}$ has the form $\dfrac{\mu}{x_{2}\dots x_{n}}\times [x_{1},x_{2}]\times [x_{2},x_{3}]\times \dots \times [x_{n-1},x_{n}]$.  If $\supp(\mu)$ contains $\{2,\dots, n\}$, this convex hull is $\Gamma_{1}(\mu)$.  If not, we obtain $\Gamma_{1}(\mu)$ upon intersecting with $\mathcal{F}_{m}$.

    To verify that the construction supports a resolution, fix a multidegree $\boldsymbol{\alpha}=x_{1}^{\alpha_{1}}\dots x_{n}^{\alpha_{n}}$; we will show that $(\mathcal{F}_{m})_{\leq \boldsymbol\alpha}$ is contractible.

    If any $\alpha_{i}<0$, $(\mathcal{F}_{m})_{\leq \boldsymbol\alpha}$ is empty and hence contractible.

    We may assume without loss of generality that $0\leq \alpha_{i}\leq d$.

    Let $\mathcal{G}_{\boldsymbol{\alpha}}$ be the polytope described by the half-spaces $\{a_{j}\geq 0\}_{j}$ and $\{a_{j}\leq \alpha_{j}\}_{j}$, and observe that $\mathcal{G}_{\boldsymbol{\alpha}}$ is contractible.  We will show that $\mathcal{F}_{\leq\boldsymbol\alpha}$ is homotopic to $\mathcal{G}_{\boldsymbol\alpha}$.

    Observe that every facet $\Gamma$ of $\mathcal{F}_{m}$ is equal to $\Gamma_{i}(\mu)$ for some monomial $\mu$.  Writing $\mu=\prod x_{i}^{t_{i}}$, we see that $\Gamma$ is the set of all points $(a_{1},\dots, a_{n})$ satisfying the equations

    \[\begin{pmatrix}
        t_{i}&\leq& a_{i}&\leq &t_{i}+1\\
        \max(0,t_{i-1}-1) &\leq& a_{i-1}&\leq& t_{i-1}\\
        \max(0,t_{j}-1)&\leq& a_{j}&\leq& t_{j}+1 
    \end{pmatrix}
\]
    We have that $\Gamma\subset \mathcal{F}_{\leq\boldsymbol{\alpha}}$ if and only $\Gamma\subset \mathcal{G}_{\boldsymbol{\alpha}}$.  If $\Gamma\cap \mathcal{G}_{\boldsymbol{\alpha}}$ is a nontrivial proper subset of $\Gamma$, set $A=\{j: \alpha_{j}=t_{j}\}$, the set of indices such that the hyperplane $(a_{j}=\alpha_{j})$ slices through $\Gamma$.  Movement of every point on the interior of $\Gamma\cap \mathcal{G}_{\boldsymbol{\alpha}}$ along the vector $\sum_{j\in A}e_{j}$ defines a deformation retract from $\Gamma\cap \mathcal{G}_{\boldsymbol{\alpha}}$ to $\Gamma\cap \mathcal{F}_{\leq \boldsymbol{\alpha}}$.  The union of all these deformations is a deformation retract from $\mathcal{G}_{\boldsymbol{\alpha}}$ to $\mathcal{F}_{\leq \boldsymbol{\alpha}}$.  In particular, $\mathcal{F}_{\leq \boldsymbol{\alpha}}$ is homotopic to $\mathcal{G}_{\boldsymbol\alpha}$ and thus is contractible. 
\end{proof}

\begin{proposition}\label{p:allBoxAreStaircase}
    Every cyclic box resolution arises from a staircase diagram via Construction \ref{c:staircaseresolution}.
\end{proposition}
\begin{proof}
    Let $X$ be a cyclic box resolution.  We will build a corresponding staircase diagram, following the intuition of Remark \ref{r:staircaseRecursion}.

    Choose an arbitrary vertex $m$ of $X$ and an arbitrary point $P$ in $\mathbb{Z}^{n}$, and label $P$ with $m$.  Then assign the remaining vertices of $X$ to points in $\mathbb{Z}^{n}$ recursively as follows:  If $f$ has already labeled a point $Q$ and $g$ is connected to $f$ by an edge of $X$, then for some monomial $m$ and index $i$, $f$ and $g$ are both vertices of $\Gamma_{i}(m)$.  
    It follows that there exist indices $j$ and $j'$ such that either $g=f\frac{x_{j}}{x_{j'}}$ or $g=f\frac{x_{j'}}{x_{j}}$ and, cyclically, $i\leq j<j'\leq i-1$.  If $g=f\frac{x_{j}}{x_{j'}}$, assign the label $g$ to $Q+e_{j+1}+e_{j+2}+\dots + e_{j'}$.  If $g=f\frac{x_{j'}}{x_{j}}$, assign the label $g$ to $Q-e_{j+1}-e_{j+2}-\dots - e_{j'}$.

    Our staircase diagram is then the cubical complex built by taking the shadow of all labeled vertices (that is, all points $A=(a_{1},\dots, a_{n})\in \mathbb{R}^{n}$ such that, for some labeled vertex $Q=(q_{1},\dots, q_{n})$, we have $a_{i}\leq q_{i}$ for all $i$).
\end{proof}

\begin{remark}
    The map from staircases to cyclic box resolutions is surjective but not injective, as illustrated by Figure \ref{f:twoStaircasesOneResolution}. 

    \begin{figure}[hbt]
    \centering

    \begin{tikzpicture}[scale=0.5]
        \tkzDefPoint(0,0){a^4}
        \tkzDefPoint(2,-1){a^3b}
        \tkzDefPoint(4,-2){a^2b^2}
        \tkzDefPoint(6,-3){ab^3}
        \tkzDefPoint(8,-4){b^4}
        
        \tkzDefPoint(2,1){a^3c}
        \tkzDefPoint(4,0){a^2bc}
        \tkzDefPoint(6,-1){ab^2c}
        \tkzDefPoint(8,-2){b^3c}
        
        \tkzDefPoint(4,2){a^2c^2}
        \tkzDefPoint(6,1){abc^2}
        \tkzDefPoint(8,0){b^2c^2}
        
        \tkzDefPoint(6,3){ac^3}
        \tkzDefPoint(8,2){bc^3}
        
        \tkzDefPoint(8,4){c^4}

        \tkzDefPoint(10,-3){1}
        \tkzDefPoint(12,-2){2}
        \tkzDefPoint(14,-1){3}
        \tkzDefPoint(16,0){4}
        \tkzDefPoint(0,2){5}
        \tkzDefPoint(10,-1){6}
        \tkzDefPoint(12,0){7}
        \tkzDefPoint(10,1){x}
        
        \tkzDefPoint(14,1){8}
        \tkzDefPoint(16,2){9}
        \tkzDefPoint(0,4){10}
        \tkzDefPoint(2,3){11}
        \tkzDefPoint(12,2){12}
        \tkzDefPoint(14,3){13}
        \tkzDefPoint(16,4){14}

        \tkzDefPoint(0,6){15}
        \tkzDefPoint(2,5){16}
        \tkzDefPoint(4,4){17}
        \tkzDefPoint(10,3){18}
        \tkzDefPoint(12,4){19}
        \tkzDefPoint(14,5){20}
        \tkzDefPoint(16,6){21}

        \tkzDefPoint(0,8){22}
        \tkzDefPoint(2,7){23}
        \tkzDefPoint(4,6){24}
        \tkzDefPoint(6,5){25}
        \tkzDefPoint(10,5){26}
        \tkzDefPoint(12,6){27}
        \tkzDefPoint(14,7){28}
        \tkzDefPoint(16,8){29}

        \tkzDefPoint(2,9){30}
        \tkzDefPoint(4,8){31}
        \tkzDefPoint(6,7){32}
        \tkzDefPoint(8,6){33}
        \tkzDefPoint(10,7){34}
        \tkzDefPoint(12,8){35}
        \tkzDefPoint(14,9){36}

        \tkzDefPoint(4,10){37}
        \tkzDefPoint(6,9){38}
        \tkzDefPoint(8,8){39}
        \tkzDefPoint(10,9){40}
        \tkzDefPoint(12,10){41}

        \tkzDefPoint(6,11){42}
        \tkzDefPoint(8,10){43}
        \tkzDefPoint(10,11){44}

        \tkzDefPoint(8,12){45}

        \fill[cyan] (22) -- (45) -- (29) -- (4) -- (b^4) -- (a^4) -- (22);

        \fill[light-light-gray] (17) -- (16) -- (10) -- (11)--(17);
        \fill[light-light-gray] (19) -- (20) -- (14) -- (13)--(19);
        \fill[light-light-gray] (23) -- (43) -- (28) -- (c^4)--(23);
        \fill[light-light-gray] (8) -- (12) -- (b^2c^2) -- (a^2c^2)--(a^3c) -- (b^3c) -- (8);

        \draw[thick] (22) -- (45) -- (29) -- (4) -- (b^4) -- (a^4) -- (22);

        \draw[thick] (17) -- (16) -- (10) -- (11)--(17);
        \draw[thick] (19) -- (20) -- (14) -- (13)--(19);
        \draw[thick] (23) -- (43) -- (28) -- (c^4)--(23);
        \draw[thick] (8) -- (12) -- (b^2c^2) -- (a^2c^2)--(a^3c) -- (b^3c) -- (8);

        \draw[thick] (38) -- (27);
        \draw[thick] (31) -- (26);
        \draw[thick] (24) -- (40);
        \draw[thick] (25) -- (35);
        \draw[thick] (a^2bc) -- (abc^2);
        \draw[thick] (ab^2c) -- (b^2c^2) -- (6);
        \draw[thick] (x) -- (7);

        \draw[thick] (30) -- (16);
        \draw[thick] (11) -- (a^3b);
        \draw[thick] (31) -- (37);
        \draw[thick] (24) -- (a^2c^2);
        \draw[thick] (a^2bc) -- (a^2b^2);
        \draw[thick] (42) -- (38);
        \draw[thick] (25) -- (abc^2);
        \draw[thick] (ab^2c) -- (ab^3);
        \draw[thick] (45) -- (43);
        \draw[thick] (c^4) -- (b^2c^2);
        \draw[thick] (b^3c) -- (b^4);
        \draw[thick] (44) -- (40);
        \draw[thick] (26) -- (x);
        \draw[thick] (6) -- (1);
        \draw[thick] (41) -- (35);
        \draw[thick] (27) -- (12);
        \draw[thick] (7) -- (2);
        \draw[thick] (36) -- (20);
        \draw[thick] (13) -- (3);

        \draw[thick] (15) -- (23);
        \draw[thick] (5) -- (a^3c);
        \draw[thick] (17) -- (bc^3)--(19);
        \draw[thick] (28) -- (21);
        \draw[thick] (8)--(9);

        \draw[thick,red] (a^4) -- (b^4)--(c^4)--(a^4);

        \tkzDrawPoints[size=3.5](a^4,a^3b,a^2b^2,ab^3,b^4,a^3c,a^2bc,ab^2c,b^3c,a^2c^2,abc^2,b^2c^2,ac^3,bc^3,c^4,1,2,3,4,5,6,7,8,9,10,11,12,13,14,15,16,17,18,19,20,21,22,23,24,25,26,27,28,29,30,31,32,33,34,35,36,37,38,39,40,41,42,43,44,45,x) 

        \tkzLabelPoints[below](a^4,a^3b,a^2b^2,ab^3,b^4)

        \node[left] at (2,2)   {$a^3c$}; 
        \node[above] at (4,0.1)   {$a^2bc$};
        \node[above] at (6,-0.9)   {$ab^2c$}; 
        \node[right] at (8,-2.1)   {$b^3c$};  

        \node[left] at (4,2.15)   {$a^2c^2$}; 
        \node[right] at (6,1.2)   {$abc^2$}; 
        \node[right] at (8.3,0)   {$b^2c^2$}; 

        \node[above] at (6,3)   {$ac^3$};
        \node[right] at (8,1.9)   {$bc^3$}; 

        \node[right] at (8.3,4.1)   {$c^4$};   
        \end{tikzpicture} 
    
    \caption{This staircase diagram is different from the one given in Figure \ref{f:staircase-example}, because the face containing $c^{4}$ is perpendicular to the $x_{1}$-axis instead of the $x_{3}$-axis. Nevertheless, the two diagrams support the same resolution for $I=(a,b,c)^4$.}
    \label{f:twoStaircasesOneResolution}
\end{figure}
\end{remark}

\begin{definition}
    Fix a labeled staircase diagram $\mathcal{S}$ and a monomial $m$.  We say that $\mathcal{S}$ has an \emph{exterior corner} at $m$ if there exists a vertex $v$ on the visible surface of $\mathcal{S}$ such that:
    \begin{itemize}
        \item $v$ is labeled with $m$.
        \item For all $i$, $v-e_{i}$ is on the visible surface of $\mathcal{S}$.
    \end{itemize}
\end{definition}

\begin{proposition}\label{p:boxPlusIfExterior}
    Fix a monomial ideal $I$, equigenerated in degree $d$.  Then $I$ has a minimal box-plus resolution if and only if there exists a labeled staircase diagram with exterior corners at the degree-$d$ monomials not contained in $I$. 
\end{proposition}
    \begin{proof}
        First, suppose that $\widehat{I}$ admits a box-plus resolution, supported on the polytopal complex $\widehat{X}$.  Then the facets of $\widehat{X}$ are either boxes of the form $\Gamma_{i}(m)$ or copies of $\widehat{Y}_{m}$, centered at various omitted monomials $m$.  Replacing each $Y_{m}$ with the collection of boxes $\Gamma_{i}(m)$ (for all $i$) yields a box-plus resolution whose faces include all $\Gamma_{i}(m)$.  Applying Proposition \ref{p:allBoxAreStaircase} gives us a staircase diagram in which all $m$ are exterior corners, as desired.

        In the other direction, suppose $\mathcal{S}$ is a staircase diagram with exterior corners labeled by various monomials $m$.  Applying Proposition \ref{p:staircasesMakeBoxes} yields a cyclic box resolution in which $\Gamma_{i}(m)$ is a facet for all $m$ and all $i$.  For each $m$, we may replace the collection of $\Gamma_{i}(m)$ with a copy of $\widehat{Y}(m)$; the result is a box-plus resolution.
        \end{proof}

Proposition \ref{p:boxPlusIfExterior} provides a new perspective on Question \ref{q:BoxResolutions}, allowing us to make several statements about which 
``dense'' ideals admit box-plus resolutions.  We begin with the deletion of exactly two monomials.

\begin{lemma}\label{l:notComparable}
    Let $\mathbf{v}$ and $\mathbf{v}'$ be two distinct lattice points in $\mathbb{R}^{n}$.  Then there exists a staircase diagram with exterior corners at $\mathbf{v}$ and $\mathbf{v}'$ if and only if $\mathbf{v}$ and $\mathbf{v}'$ are incomparable. 
\end{lemma}
\begin{proof}
    Take $\mathcal{S}$ equal to the collection of all points $\mathbf{w}$ such that $\mathbf{w}\preceq \mathbf{v}$ or $\mathbf{w}\preceq \mathbf{v}'$.  (Here, $\mathbf{w}\preceq \mathbf{v}$ means that every entry of $\mathbf{w}$ is less than or equal to the corresponding entry of $\mathbf{v}$.)
\end{proof}

\begin{corollary}\label{c:notComparable}
    Let $m=\prod x_{i}^{a_{i}}$ and $m'=\prod x_{i}^{b_{i}}$ be two monomials of degree $d$, and let $\widehat{I}$ be the ideal generated by all degree-$d$ monomials except $m$ and $m'$.  The following are equivalent:
    \begin{enumerate}
        \item $\widehat{I}$ has a minimal box-plus resolution.
        \item For some (not necessarily distinct) $i$ and $j$, the cyclic sum $\sum_{t=i}^{t=j}(a_{t}-b_{t})$ is greater than or equal to $2$.
        \item For some (not necessarily distinct) $i$ and $j$, the cyclic sum $\sum_{t=i}^{t=j}(a_{t}-b_{t})$ is less than or equal to $-2$.
        \item The vector $(a_{1}-b_{1},a_{2}-b_{2},\dots,a_{n}-b_{n})$ either contains an entry not in $\{0,\pm 1\}$ or contains two nonzero entries of the same sign separated only by zeroes.  (That is, if we were to remove all the zeroes from this vector, there should be either a $2$ or two consecutive entries with the same sign.) 
        \item  $m$ is not contained in $\Gamma_{i}(m')$ for any $i$.
        \item $m'$ is not contained in $\Gamma_{i}(m)$ for any $i$.
    \end{enumerate}
\end{corollary}

\begin{proof}
    First observe that (2) and (3) are equivalent since $\sum_{t=1}^{n}(a_{t}-b_{t})=\deg(m)-\deg(m')=0$.  
    Observe also that (2) and (3) are equivalent to (4):  Given (2), choose $i$ and $j$ with $j-i$ minimal.  If $j=i$, we have $a_{i}-b_{i}\geq 2$.  Otherwise, we have $a_{i}-b_{i}=a_{j}-b_{j}=1$ and $a_{t}-b_{t}=0$ whenever $i<t<j$.  Conversely, given (4), if $a_{i}-b_{i}\geq 2$, we may take $j=i$; otherwise, if $a_{i}-b_{i}=a_{j}-b_{j}=1$ and $a_{t}-b_{t}=0$ for all $i<t<j$ we have (2) immediately.
    
    Next, observe that (5) and (6) are equivalent since $m'\in\Gamma_{i}(m)$ means that $m'=m\prod_{t\in \sigma}\frac{x_{t-1}}{x_{t}}$ for some nontrivial proper $\sigma\subset\{1,\dots,n\}$, which is equivalent to $m=m'\prod_{t\not\in\sigma}\frac{x_{t-1}}{x_{t}}$, i.e., $m\in \Gamma_{j}(m')$ for some $j$.  
    
    To see that (1) implies (5), it suffices to observe that, if (5) fails, the two copies of $\widehat{J}$ centered around $m$ and $m'$ must contain $m'$ and $m$ as vertices, respectively.

    The remaining implications are more technical.

    (5) implies (2),(3),(4):  Suppose (2) fails, so for every $i$ and $j$ we have $\sum_{t=i}^{t=j}(a_{t}-b_{t})\leq 1$.  Without loss of generality, assume $a_{1}-b_{1}=1$.  For all $j$, set $c_{i}=\sum_{t=1}^{t=j}(a_{t}-b_{t})$.  Observe that $c_{j}\in \{0,1\}$ for all $j$:  If $c_{j}\geq 2$, then (2) would hold, and if $c_{j}<0$, then $\sum_{t=j+1}^{n}(a_{t}-b_{t})>0$, so the cyclic sum $\sum_{t=j+1}^{1}(a_{t}-b_{t})$ is greater than $1$, contradicting the failure of (2).  Now observe that $m=m'\prod_{c_{j}=1}\frac{x_{j}}{x_{j+1}}$.  It follows that $m\in \Gamma_{1}(m')$ (since $c_{n}=\deg(m)-\deg(m')=0$). 

    (2) implies (1):  Assume (2) holds, and label the lattice points of $\mathbb{R}^{n}$ according to construction \ref{c:staircaselabel} so that the origin (and all points of the form $\mathbf{w}_{s}=s(1,1,\dots, 1)$) are labeled by $m$.  It follows that the points 
    \begin{align*}
        \mathbf{v}_{s}=\left(\sum_{t=1}^{n-1}(b_{t}-a_{t}),\sum_{t=2}^{n-1}(b_{t}-a_{t}),\dots, \sum_{t=n-1}^{n-1}(b_{t}-a_{t}),0\right) + s(1,1,\dots, 1)
    \end{align*}
     are all labeled by $m'$.

    Since (2) holds, without loss of generality we have $\sum_{t=n}^{t=j}(a_{t}-b_{t})\geq 2$ for some $j$.  Thus the entry in the $(j+1)^{\text{th}}$ spot of $\mathbf{v}_{0}$ is $\sum_{t=j+1}^{n-1}(a_{t}-b_{t})\leq -2$.  It follows that $\mathbf{v}_{0}$ and $\mathbf{w}_{-1}$ are incomparable (since $\mathbf{v}_{0}$ is smaller in the $(j-1)^{\text{th}}$ position and larger in the $n^{\text{th}}$ position).  In particular, there exists a staircase diagram with exterior corners at $\mathbf{v}_{0}$ and $\mathbf{w}_{-1}$ by Lemma \ref{l:notComparable}; this diagram supports a minimal box-plus resolution of $\widehat{I}$ by Proposition \ref{p:boxPlusIfExterior}.
\end{proof}

\begin{remark}\label{r:badTriangle}
    Lemma \ref{c:notComparable} holds for two monomials but not three.  Consider the monomial ideal $I$ of Example \ref{e:not-box-plus}, which is generated by all degree five monomials except $a^{3}bc$, $ab^{3}c$, and $abc^{3}$. The omitted generators satisfy the conditions of Lemma \ref{c:notComparable} pairwise, but the ideal does not have a box-plus resolution.  The problem is that, if we attempt to build the necessary staircase diagram, we end up with a triangle connecting the monomials $a^{2}b^{2}c$, $a^{2}bc^{2}$, and $ab^{2}c^{2}$.  Since the triangle is an odd cycle, attempting to build a staircase diagram for $I$ by assigning lattice points as in Remark \ref{r:staircaseRecursion} will yield an Escher-style infinite staircase around this triangle, forcing all choices of $\mathbf{v}+s(1,1,1)$ for the corresponding lattice points.
\end{remark}

\begin{question}
    Are forced odd cycles like in Remark \ref{r:badTriangle} the only obstruction to box-plus resolutions?  Or are there more interesting ways for things to go wrong, especially with more than three variables?
\end{question}

The best positive answer we have found to Question \ref{q:DeleteMore} is that a box-plus resolution is always possible when the deleted monomials are not adjacent and have distinct exponents on one variable.

\begin{proposition}\label{p:betterseparatedSequence}
Suppose $m_{1}=\prod x_{i}^{a_{1,i}}, \dots, m_{s}=\prod x_{i}^{a_{s,i}}$ are monomials of degree $d$ with distinct exponents on $x_{n}$, ordered so that $a_{1,n}<a_{2,n}<\dots<a_{s,n}$.  Suppose further that all pairs $(m_{i},m_{j})$ satisfy the conditions of Corollary \ref{c:notComparable}.  Set $\widehat{I}$ equal to the ideal generated by all degree $d$ monomials except $m_{1},\dots, m_{s}$.  Then $\widehat{I}$ has a minimal box-plus resolution.    
\end{proposition}
\begin{proof}
     We will construct a labeled staircase diagram that has exterior corners labeled with each of the $m_{i}$.

    Label the origin with $x_{1}^{d}$. Given any monomial $\mu$, recall from Remark \ref{r:labeledLine} that the collection of vertices labeled by $\mu$ forms a line with direction vector $(1,1,\dots,1)$.  In particular, given $\mu$ and an integer $t$, there is a unique vertex  
    $\mathbf{v}_{t,\mu}=(c_{t,1},c_{t,2},\dots, c_{t,n-1},t)$ with last entry $t$ and label $\mu$.

    For each monomial $\mu=x_{1}^{b_{1}}\dots x_{n}^{b_{n}}$, define a statistic $\phi(\mu)$ to be the maximal length of a chain 
    \[
    \mu\geq_{Q_{n}}m_{i_{1}}>_{Q_{n}}m_{i_{2}}>_{Q_{n}}\dots >_{Q_{n}}m_{i_{\phi(\mu)}},
    \]
    where $\geq_{Q_{n}}$ is the Borel partial order with respect to the order $x_{n}<x_{1}<x_{2}<\dots <x_{n-1}$; that is, $m_{i}\geq _{Q_{n}}m_{j}$ if the cyclic inequalities $\sum_{p=n}^{r}a_{i,p}\geq \sum_{p=n}^{r}a_{j,p}$ hold for all $i$.

    Let $\mathcal{S}$ be the staircase diagram consisting of all $\mathbf{v}_{\phi(\mu),\mu}$ for monomials $\mu$ of degree $d$ (which includes both the $m_{i}$ and the generators of $\widehat{I}$, and all lattice points that are below any of the $\mathbf{v}_{\phi(\mu),\mu}$.  We claim that $\mathcal{S}$ has exterior corners at $\mathbf{v}_{\phi(m_{i}),m_{i}}$.

    \begin{figure}[hbt]
    \centering

    \begin{tikzpicture}[scale=0.5]
        \tkzDefPoint(0,0){a^6}
        \tkzDefPoint(2,-1){a^5b}
        \tkzDefPoint(4,-2){a^4b^2}
        \tkzDefPoint(6,-3){a^3b^3}
        \tkzDefPoint(8,-4){a^2b^4}
        \tkzDefPoint(10,-5){ab^5}
        \tkzDefPoint(12,-6){b^6}

        \tkzDefPoint(2,1){a^5c}
        \tkzDefPoint(4,0){a^4bc}
        \tkzDefPoint(6,-1){a^3b^2c}
        \tkzDefPoint(8,-2){a^2b^3c}
        \tkzDefPoint(10,-3){ab^4c}
        \tkzDefPoint(12,-4){b^5c}
        
        \tkzDefPoint(4,2){a^4c^2}
        \tkzDefPoint(6,1){a^3bc^2}
        \tkzDefPoint(8,0){a^2b^2c^2}
        \tkzDefPoint(10,-1){ab^3c^2}
        \tkzDefPoint(12,-2){b^4c^2}
        
        \tkzDefPoint(6,3){a^3c^3}
        \tkzDefPoint(8,2){a^2bc^3}
        \tkzDefPoint(10,1){ab^2c^3}
        \tkzDefPoint(12,0){b^3c^3}
        
        \tkzDefPoint(8,4){a^2c^4}
        \tkzDefPoint(10,3){abc^4}
        \tkzDefPoint(12,2){b^2c^4}

        \tkzDefPoint(10,5){ac^5}
        \tkzDefPoint(12,4){bc^5}

        \tkzDefPoint(12,6){c^6}

        \tkzDefPoint(14,-5){1}
        \tkzDefPoint(16,-4){2}
        \tkzDefPoint(18,-3){3}
        
        \tkzDefPoint(14,-3){4}
        \tkzDefPoint(16,-2){5}
        \tkzDefPoint(18,-1){6}

        \tkzDefPoint(14,-1){7}
        \tkzDefPoint(16,0){8}
        \tkzDefPoint(18,1){9}
        
        \tkzDefPoint(14,1){10}
        \tkzDefPoint(16,2){11}
        \tkzDefPoint(18,3){12}

        \tkzDefPoint(14,3){13}
        \tkzDefPoint(16,4){14}
        \tkzDefPoint(18,5){15}

        \tkzDefPoint(14,5){16}
        \tkzDefPoint(16,6){17}

        \tkzDefPoint(14,7){18}

        \tkzDefPoint(12,8){19}
        \tkzDefPoint(10,9){20}
        \tkzDefPoint(8,10){21}
        \tkzDefPoint(6,11){22}

        \tkzDefPoint(10,7){23}
        \tkzDefPoint(8,8){24}
        \tkzDefPoint(6,9){25}
        \tkzDefPoint(4,10){26}

        \tkzDefPoint(8,6){27}
        \tkzDefPoint(6,7){28}
        \tkzDefPoint(4,8){29}
        \tkzDefPoint(2,9){30}

        \tkzDefPoint(6,5){31}
        \tkzDefPoint(4,6){32}
        \tkzDefPoint(2,7){33}
        \tkzDefPoint(0,8){34}

        \tkzDefPoint(4,4){35}
        \tkzDefPoint(2,5){36}
        \tkzDefPoint(0,6){37}

        \tkzDefPoint(2,3){38}
        \tkzDefPoint(0,4){39}

        \tkzDefPoint(0,2){40}

        \fill[cyan] (a^6) -- (b^6) -- (3)--(15)--(22) -- (34) -- (a^6);

        \fill[light-light-gray] (40) -- (38) -- (a^3bc^2) -- (a^4bc); 
        \fill[light-light-gray] (a^4b^2) -- (a^3b^2c) -- (b^5c) -- (5) -- (3) -- (b^6);
        \fill[light-light-gray] (36) -- (32) -- (abc^4) -- (bc^5) -- (13) -- (ab^2c^3) -- (36);
        \fill[light-light-gray] (29) -- (25) -- (c^6) -- (ac^5) -- (29);
        \fill[light-light-gray] (29) -- (25) -- (c^6) -- (ac^5) -- (29);
        \fill[light-light-gray] (b^4c^2) -- (8) -- (10) -- (ab^3c^2) -- (b^4c^2);

        \draw [thick] (a^6) -- (b^6) -- (3)--(15)--(22) -- (34) -- (a^6);
        \draw [thick] (40) -- (38) -- (a^3bc^2) -- (a^4bc)--(40);
        \draw [thick] (a^4b^2) -- (a^3b^2c) -- (b^5c) -- (5) -- (3);
        \draw [thick] (36) -- (32) -- (abc^4) -- (bc^5) -- (13) -- (ab^2c^3) -- (36);
        \draw [thick] (29) -- (25) -- (c^6) -- (ac^5) -- (29);
        \draw [thick] (b^4c^2) -- (8) -- (10) -- (ab^3c^2) -- (b^4c^2);

        \draw [thick] (a^5b)--(a^5c);
        \draw [thick] (30)--(38);
        \draw [thick] (26)--(32);
        \draw [thick] (35)--(a^4c^2);
        \draw [thick] (a^4b^2)--(a^4bc);
        \draw [thick] (22)--(25);
        \draw [thick] (28)--(31);
        \draw [thick] (a^3c^3)--(a^3b^2c);
        \draw [thick] (21)--(24);
        \draw [thick] (27)--(a^2c^4);
        \draw [thick] (a^2b^3c)--(a^2bc^3);
        \draw [thick] (20)--(23);
        \draw [thick] (ac^5)--(abc^4);
        \draw [thick] (ab^2c^3)--(ab^4c);
        \draw [thick] (19)--(bc^5);
        \draw [thick] (b^2c^4)--(b^3c^3);
        \draw [thick] (b^4c^2)--(b^5c);
        \draw [thick] (18)--(10);
        \draw [thick] (7)--(4);
        \draw [thick] (17)--(5);

        \draw [thick] (a^3bc^2)--(ab^3c^2);
        \draw [thick] (c^6)--(12);
        \draw [thick] (13)--(9);
        \draw [thick] (8)--(6);

        \draw [thick] (b^3c^3)--(7);
        \draw [thick] (a^3b^3)--(a^2b^3c);
        \draw [thick] (a^2b^4)--(ab^4c);
        \draw [thick] (ab^5)--(b^5c) -- (1);
        \draw [thick] (4)--(2);
        
        \draw [thick] (37)--(29);
        \draw [thick] (39)--(36);
        \draw [thick] (28)--(24);
        \draw [thick] (35)--(31);
        \draw [thick] (27)--(23);
        \draw [thick] (a^2bc^3)--(abc^4) -- (b^2c^4);

        \draw [thick] (a^6) -- (b^6) -- (c^6)--(a^6);

        \draw [thick,red] (a^6) -- (b^6) -- (c^6)--(a^6);

        \tkzDrawPoints[size=3.5](a^6,b^6,c^6,a^5b,a^4b^2,a^3b^3,a^2b^4,ab^5,a^5c,a^4bc,a^3b^2c,a^2b^3c,ab^4c,b^5c,a^4c^2,a^3bc^2,a^2b^2c^2,ab^3c^2,a^3c^3,a^2bc^3,ab^2c^3,b^3c^3,a^2c^4,abc^4,b^2c^4,ac^5,bc^5) 
        
        \tkzDrawPoints[red,size=3.5](a^4bc,ab^2c^3,ac^5,b^4c^2) 
        
        \tkzDrawPoints[size=3.5](1,2,3,4,5,6,7,8,9,10,11,12,13,14,15,16,17,18,19,20,21,22,23,24,25,26,27,28,29,30,31,32,33,34,35,36,37,38,39,40)

        \tkzLabelPoints[below](a^6,b^6)  
        \tkzLabelPoints[right](c^6,b^4c^2)
        \tkzLabelPoints[above](ac^5,ab^2c^3,a^4bc)

        \end{tikzpicture} 
    
    \caption{This complex supports a minimal resolution for $I=(a,b,c)^6$ with exterior corners at $a^4bc$, $b^{4}c^{2}$, 
    $ab^2c^3$, and $ac^5$. The staircase is constructed by assigning each monomial $\mu$ to the vertex $\mathbf{v}_{\phi(\mu),\mu}$, where $\phi(\mu)$ is defined as in the proof of Proposition \ref{p:betterseparatedSequence}.} 
        \label{f:construction-box-plus}
\end{figure}

It suffices to show that, for any nontrivial nonnegative $\mathbf{u}=\sum c_{j}e_{j}$, we have $\mathbf{v}_{\phi(m_{i}),m_{i}}+\mathbf{u}\not\in\mathcal{S}$, for which it is enough to show that $\mathbf{v}_{\phi(m_{i}),m_{i}}+\mathbf{u}\neq \mathbf{v}_{\phi(\mu),\mu}$ for any $\mu$.  Suppose to the contrary that, for some $\mu=\mu_{\mathbf{u}}$, we have $\mathbf{v}_{\phi(m_{i}),m_{i}}+\mathbf{u}= \mathbf{v}_{\phi(\mu_{\mathbf{u}}),\mu_{\mathbf{u}}}$.

    First observe that, since $\mathbf{v}_{\phi(m_{i}),m_{i}}+\mathbf{u}=\mathbf{v}_{\phi(\mu_{\mathbf{u}}),\mu_{\mathbf{u}}}$, we have $\phi(\mu_{\mathbf{u}})=\phi(m_{i})+c_{n}$ immediately.  If $c_{n}=0$, then $\phi(\mu_{\mathbf{u}})=\phi(m_{i})$, so there exists a chain
    \[
    \mu_{\mathbf{u}}\geq m_{j_{1}}>_{Q_{n}}m_{j_{2}}>_{Q_{n}}\dots >_{Q_{n}} m_{j_{\phi(\mu_{\mathbf{u}})}}.
    \]
    On the other hand, $\mu_{\mathbf{u}}=m_{i}(\frac{x_{1}}{x_{n}})^{c_{1}}(\frac{x_{2}}{x_{1}})^{c_{2}}\dots(\frac{x_{n-1}}{x_{n-2}})^{c_{n-1}}$, so $m_{i}>_{Q_{n}}\mu$ and the chain
    \[
    m_{i}\geq_{Q_{n}} m_{i}>_{Q_{n}}m_{j_{1}}>_{Q_{n}}\dots >_{Q_{n}}m_{j_{\phi(\mu)}}
    \]
    has length $\phi(m_{i})+1$, a contradiction.

    If $c_{n}=1$, there exists a chain
    \[
    \mu_{\mathbf{u}}\geq_{Q_{n}} m_{j_{1}}>_{Q_{n}}m_{j_{2}}>_{Q_{n}}\dots >_{Q_{n}} m_{j_{\phi(\mu_{\mathbf{u}})}}.
    \]
    By assumption, the $x_{n}$-exponent on $m_{j_{2}}$ is strictly less than the $x_{n}$-exponent on $m_{j_{1}}$. Since $\mu_{\mathbf{u}}\geq_{Q_{n}}m_{j_{1}}>_{Q_{n}}m_{j_{2}}$, we conclude that $\mu_{\mathbf{u}-e_{n}}\geq_{Q_{n}}m_{j_{2}}$.  Since $c_{n}=1$, $m_{i}\geq_{Q_{n}}\mu_{\mathbf{u}-e_{n}}$, and we have the chain
    \[
    m_{i}\geq_{Q_{n}} m_{i}\geq_{Q_{n}}m_{j_{2}}>_{Q_{n}}\dots >_{Q_{n}} m_{j_{\phi(\mu_{\mathbf{u}})}}.
    \]
    Since  $\phi(m_{i})\neq \phi(\mu_{\mathbf{u}})$,the second inequality cannot be strict, i.e., $m_{j_{2}}=m_{i}$ and $\mathbf{u}=e_{n}$.  But then $m_{j_{1}}=m_{i}\frac{x_{n}}{x_{1}}$, violating the conditions of Corollary \ref{c:notComparable}.

    Finally, if $c_{n}>1$,  a similar argument shows that $m_{j_{c_{n}}}=m_{i}\frac{x_{n}}{x_{1}}$. 
    \end{proof}

\begin{bibdiv}
\begin{biblist}[\resetbiblist{BLSWZ}]

\bib{Bayer1996duality}{article}{
author={Bayer, Dave},
title={Monomial Ideals and Duality},
date={1996},
status={preprint},
eprint={https://www.math.columbia.edu/~bayer/papers/Duality_B96/Duality_B96.pdf},
label={Ba},
}

\bib{BPS}{article}{
   author={Bayer, Dave},
   author={Peeva, Irena},
   author={Sturmfels, Bernd},
   title={Monomial resolutions},
   journal={Math. Res. Lett.},
   volume={5},
   date={1998},
   number={1-2},
   pages={31--46},
   issn={1073-2780},
   review={\MR{1618363}},
   doi={10.4310/MRL.1998.v5.n1.a3},
}

\bib{BS}{article}{
   author={Bayer, Dave},
   author={Sturmfels, Bernd},
   title={Cellular resolutions of monomial modules},
   journal={J. Reine Angew. Math.},
   volume={502},
   date={1998},
   pages={123--140},
   issn={0075-4102},
   review={\MR{1647559}},
   doi={10.1515/crll.1998.083},
}

\bib{JenniferBiermann}{article}{
   author={Biermann, Jennifer},
   title={Cellular structure on the minimal resolution of the edge ideal of
   the complement of the $n$-cycle},
   journal={Comm. Algebra},
   volume={42},
   date={2014},
   number={8},
   pages={3665--3681},
   issn={0092-7872},
   review={\MR{3196068}},
   doi={10.1080/00927872.2013.768629},
   label={Bi},
}

\bib{BuchsbaumEisenbud}{article}{
   author={Buchsbaum, David A.},
   author={Eisenbud, David},
   title={Generic free resolutions and a family of generically perfect
   ideals},
   journal={Advances in Math.},
   volume={18},
   date={1975},
   number={3},
   pages={245--301},
   issn={0001-8708},
   review={\MR{0396528}},
   doi={10.1016/0001-8708(75)90046-8},
}

\bib{BR}{article}{
   author={Buchsbaum, David A.},
   author={Rim, Dock S.},
   title={A generalized Koszul complex. II. Depth and multiplicity},
   journal={Trans. Amer. Math. Soc.},
   volume={111},
   date={1964},
   pages={197--224},
   issn={0002-9947},
   review={\MR{0159860}},
   doi={10.2307/1994241},
}

\bib{CharalambousEvans}{article}{
   author={Charalambous, Hara},
   author={Evans, E. Graham, Jr.},
   title={Resolutions obtained by iterated mapping cones},
   journal={J. Algebra},
   volume={176},
   date={1995},
   number={3},
   pages={750--754},
   issn={0021-8693},
   review={\MR{1351361}},
   doi={10.1006/jabr.1995.1270},
}

\bib{Clark}{article}{
   author={Clark, Timothy B. P.},
   title={A minimal poset resolution of stable ideals},
   conference={
      title={Progress in commutative algebra 1},
   },
   book={
      publisher={de Gruyter, Berlin},
   },
   isbn={978-3-11-025034-3},
   date={2012},
   pages={143--166},
   review={\MR{2932584}},
   label={Cl},
}

\bib{ClarkMapes}{article}{
   author={Clark, Timothy B. P.},
   author={Mapes, Sonja},
   title={Rigid monomial ideals},
   journal={J. Commut. Algebra},
   volume={6},
   date={2014},
   number={1},
   pages={33--52},
   issn={1939-0807},
   review={\MR{3215560}},
   doi={10.1216/JCA-2014-6-1-33},
}

\bib{ClarkTchernev}{article}{
   author={Clark, Timothy B. P.},
   author={Tchernev, Alexandre B.},
   title={Minimal free resolutions of monomial ideals and of toric rings are
   supported on posets},
   journal={Trans. Amer. Math. Soc.},
   volume={371},
   date={2019},
   number={6},
   pages={3995--4027},
   issn={0002-9947},
   review={\MR{3917215}},
   doi={10.1090/tran/7614},
}

\bib{DE}{article}{
   author={Dao, Hailong},
   author={Eisenbud, David},
   title={Linearity of free resolutions of monomial ideals},
   journal={Res. Math. Sci.},
   volume={9},
   date={2022},
   number={2},
   pages={Paper No. 35, 15},
   issn={2522-0144},
   review={\MR{4431293}},
   doi={10.1007/s40687-022-00330-6},
}

\bib{DochtermannJoswigSanyal}{article}{
   author={Dochtermann, Anton},
   author={Joswig, Michael},
   author={Sanyal, Raman},
   title={Tropical types and associated cellular resolutions},
   journal={J. Algebra},
   volume={356},
   date={2012},
   pages={304--324},
   issn={0021-8693},
   review={\MR{2891135}},
   doi={10.1016/j.jalgebra.2011.12.028},
}

\bib{EagonMillerOrdog}{article}{
    author={Eagon, John},
    author={Millor, Ezra},
    author={Ordog, Erika},
    title={Minimal Resolutions of Monomial Ideals},
    eprint={arXiv:1906.08837},
    doi={10.48550/arXiv.1906.08837},
    status={preprint},
    date={2019}
}

\bib{EK}{article}{
   author={Eliahou, Shalom},
   author={Kervaire, Michel},
   title={Minimal resolutions of some monomial ideals},
   journal={J. Algebra},
   volume={129},
   date={1990},
   number={1},
   pages={1--25},
   issn={0021-8693},
   review={\MR{1037391}},
   doi={10.1016/0021-8693(90)90237-I},
}

\bib{FranciscoMerminSchweigBorelgenerators}{article}{
   author={Francisco, Christopher A.},
   author={Mermin, Jeffrey},
   author={Schweig, Jay},
   title={Borel generators},
   journal={J. Algebra},
   volume={332},
   date={2011},
   pages={522--542},
   issn={0021-8693},
   review={\MR{2774702}},
   doi={10.1016/j.jalgebra.2010.09.042},
}

\bib{FMS2}{article}{
   author={Francisco, Christopher A.},
   author={Mermin, Jeffrey},
   author={Schweig, Jay},
   title={Generalizing the Borel property},
   journal={J. Lond. Math. Soc. (2)},
   volume={87},
   date={2013},
   number={3},
   pages={724--740},
   issn={0024-6107},
   review={\MR{3073673}},
   doi={10.1112/jlms/jds071},
}

\bib{GasharovPeevaWelker}{article}{
   author={Gasharov, Vesselin},
   author={Peeva, Irena},
   author={Welker, Volkmar},
   title={The lcm-lattice in monomial resolutions},
   journal={Math. Res. Lett.},
   volume={6},
   date={1999},
   number={5-6},
   pages={521--532},
   issn={1073-2780},
   review={\MR{1739211}},
   doi={10.4310/MRL.1999.v6.n5.a5},
}

\bib{Mapes}{article}{
   author={Mapes, Sonja},
   title={Finite atomic lattices and resolutions of monomial ideals},
   journal={J. Algebra},
   volume={379},
   date={2013},
   pages={259--276},
   issn={0021-8693},
   review={\MR{3019256}},
   doi={10.1016/j.jalgebra.2013.01.005},
   label={Ma}
}

\bib{mermin2010cellular}{article}{
   author={Mermin, Jeffrey},
   title={The Eliahou-Kervaire resolution is cellular},
   journal={J. Commut. Algebra},
   volume={2},
   date={2010},
   number={1},
   pages={55--78},
   issn={1939-0807},
   review={\MR{2607101}},
   doi={10.1216/JCA-2010-2-1-55},
   label={Me1},
}

\bib{mermin2011simplicial}{article}{
   author={Mermin, Jeff},
   title={Three simplicial resolutions},
   conference={
      title={Progress in commutative algebra 1},
   },
   book={
      publisher={de Gruyter, Berlin},
   },
   isbn={978-3-11-025034-3},
   date={2012},
   pages={127--141},
   review={\MR{2932583}},
   label={Me2}
}

\bib{Miller}{article}{
   author={Miller, Ezra},
   title={Planar graphs as minimal resolutions of trivariate monomial
   ideals},
   journal={Doc. Math.},
   volume={7},
   date={2002},
   pages={43--90},
   issn={1431-0635},
   review={\MR{1911210}},
   label={Mi},
}

\bib{miller2004combinatorial}{book}{
   author={Miller, Ezra},
   author={Sturmfels, Bernd},
   title={Combinatorial commutative algebra},
   series={Graduate Texts in Mathematics},
   volume={227},
   publisher={Springer-Verlag, New York},
   date={2005},
   pages={xiv+417},
   isbn={0-387-22356-8},
   review={\MR{2110098}},
}

\bib{NagelReiner}{article}{
   author={Nagel, Uwe},
   author={Reiner, Victor},
   title={Betti numbers of monomial ideals and shifted skew shapes},
   journal={Electron. J. Combin.},
   volume={16},
   date={2009},
   number={2},
   pages={Research Paper 3, 59},
   review={\MR{2515766}},
   doi={10.37236/69},
}

\bib{Novik}{article}{
   author={Novik, Isabella},
   title={Lyubeznik's resolution and rooted complexes},
   journal={J. Algebraic Combin.},
   volume={16},
   date={2002},
   number={1},
   pages={97--101},
   issn={0925-9899},
   review={\MR{1941987}},
   doi={10.1023/A:1020838732281},
   label={No},
}

\bib{Peeva}{book}{
   author={Peeva, Irena},
   title={Graded syzygies},
   series={Algebra and Applications},
   volume={14},
   publisher={Springer-Verlag London, Ltd., London},
   date={2011},
   pages={xii+302},
   isbn={978-0-85729-176-9},
   review={\MR{2560561}},
   doi={10.1007/978-0-85729-177-6},
   label={Pe},
}

\bib{PS}{article}{
   author={Peeva, Irena},
   author={Stillman, Mike},
   title={The minimal free resolution of a Borel ideal},
   journal={Expo. Math.},
   volume={26},
   date={2008},
   number={3},
   pages={237--247},
   issn={0723-0869},
   review={\MR{2437094}},
   doi={10.1016/j.exmath.2007.10.003},
}

\bib{Sinefakopoulos2007}{article}{
   author={Sinefakopoulos, Achilleas},
   title={On Borel fixed ideals generated in one degree},
   journal={J. Algebra},
   volume={319},
   date={2008},
   number={7},
   pages={2739--2760},
   issn={0021-8693},
   review={\MR{2397405}},
   doi={10.1016/j.jalgebra.2008.01.017},
   label={Si},
}

\bib{Tchernev}{article}{
    author={Tchernev, Alexandre},
    title={Dynamical Systems on chain complexes and canonical minimal resolutions},
    eprint={arXiv:1909.08577},
    doi={10.48550/arXiv.1906.08837},
    status={preprint},
    date={2019},
    label={Tc},
}

\bib{TchernevVarisco}{article}{
   author={Tchernev, Alexandre},
   author={Varisco, Marco},
   title={Modules over categories and Betti posets of monomial ideals},
   journal={Proc. Amer. Math. Soc.},
   volume={143},
   date={2015},
   number={12},
   pages={5113--5128},
   issn={0002-9939},
   review={\MR{3411130}},
   doi={10.1090/proc/12643},
}

\end{biblist}
\end{bibdiv}

\end{document}